\DeclarePairedDelimiter{\ceil}{\lceil}{\rceil}
\DeclarePairedDelimiter{\floor}{\lfloor}{\rfloor}
\numberwithin{equation}{section}
\def\be{{\beta}}
\def\ga{\gamma}
\def\Ga{\Gamma}
\def\de{\delta}
\def\ep{\epsilon}
\def\la{\lambda}
\def\ze{\zeta}
\def\al{\alpha}
\def\hat{\widehat}
\def\bar{\overline}
\def\Li{\operatorname{Li}}
\def\L{\mathcal L}
\newtheorem{theorem}{Theorem}[section]
\newtheorem{remark}[theorem]{Remark}
\newtheorem{lemma}[theorem]{Lemma}
\newtheorem{prop}[theorem]{Proposition}
\newtheorem{conjecture}[theorem]{Conjecture}
\newtheorem{defi}[theorem]{Definition}
\newcommand{\RR}{\mathbb{R}}
\newcommand{\C}{\mathbb{C}}
\newcommand{\Z}{\mathbb{Z}}
\newcommand{\T}{\mathbb{T}}
\newcommand{\dy}{\partial_y}
\newcommand{\sign}{{\rm sgn}\thinspace}
\newcommand{\te}{\theta}
\newcommand{\vp}{\varphi}
\newcommand\TT{\mathbb T}
\begin{document}

\title{Convexity of Whitham's highest cusped wave}

\author{Alberto Enciso}
\address{Instituto de Ciencias Matem\'{a}ticas}
\email{aenciso@icmat.es}

\author{Javier G\'omez-Serrano} 
\address{Princeton University}
\email{jg27@math.princeton.edu}

\author{Bruno Vergara}
\address{Instituto de Ciencias Matem\'{a}ticas}
\email{bruno.vergara@icmat.es}

\thanks{}

\begin{abstract}
We prove the existence of a periodic traveling wave of extreme form of
the Whitham equation that has a convex profile between consecutive
stagnation points, at which it is known to feature a cusp of exactly
$C^{1/2}$~regularity. The convexity of Whitham's highest cusped
wave had been conjectured by Ehrnstr\"om and Wahl\'en.
\end{abstract}

\maketitle

\section{Introduction}

Whitham's equation~\cite{Whitham} is a nonlinear, nonlocal shallow water wave model in one space
dimension that reads as
\begin{equation}\label{Whithamt}
\partial_t v + \partial_x(L v+v^2)=0\,,
\end{equation}
where~$L$ is the Fourier multiplier defined in terms of the full
dispersion relation for gravity water waves
$m(\xi):=(\tanh\xi/\xi)^{1/2}$,
\[
\widehat{Lf}(\xi):= m(\xi)\, \widehat f(\xi)\,.
\]
Whitham proposed this equation in 1967 as an alternative to the
well-known KdV equation, as the latter does not accurately describe
the dynamics of short waves. 

The key feature of Whitham's equation is its very weak dispersion,
which is due to the fact that the symbol~$m(\xi)$ has a completely
different behavior for large frequencies than equations such as KdV,
whose corresponding Fourier multiplier is precisely defined by the
second-order Taylor series of~$m(\xi)$. This very weak dispersion
allows Whitham's equation to exhibit both smooth periodic and solitary
solutions on the one hand~\cite{EGW,EK1,EK2}, and singular solutions on the
other. 

Singular solutions for the Whitham equation appear as wave
breaking~\cite{Constantin,Hur} (i.e., as bounded solutions
to~\eqref{Whithamt} whose derivative blows up in finite time) and as
traveling waves with sharp crests, which are only of $C^{1/2}$~regularity. In this paper we will be
concerned with the latter, whose existence was conjectured
some forty years ago by Whitham~\cite{Whitham} and established by
Ehrnstr\"om and Wahl\'en~\cite{EW} just recently. Interestingly, if one replaces the
Whitham equation by a related fully dispersive model that contains
both branches of the full Euler dispersion relation instead of just
one, non-smooth traveling waves have been found too~\cite{EJC}, but
the solutions are in $C^\al$ for all~$\al<1$.

Let us elaborate on the existence of sharp crests. With the ansatz $v(x,t):= \vp(x-\mu
t)$, the study of traveling waves for the Whitham equation reduces to
the analysis of the equation
\begin{equation}
  \label{Wphi}
  L\vp-\mu\vp+\vp^2=0\,,
\end{equation}
where the positive constant~$\mu$ represents the speed of the
traveling wave. Whitham himself conjectured~\cite[p.~479]{Whitham2}
that the equation should admit traveling
waves with a sharp crest, and provided a heuristic argument suggesting
that the crest should be cusped with $\vp(x) \sim
\frac\mu2-c|x|^{1/2}$. 

Ehrnstr\"om and Wahl\'en's proof of this conjecture~\cite{EW} is
based on a remarkable global bifurcation argument, where cusped
solutions of any period were shown to exist by continuing off a
local branch of small amplitude periodic traveling waves bifurcating
from the zero state. These solutions were shown to be smooth away from their
highest point (the crest) and behave like $|x|^{1/2}$ near the crest,
so their sharp H\"older regularity is~$C^{1/2}$. These authors also
conjectured that, just as in the celebrated case of the highest traveling water
waves (which present a corner of 120 degrees)~\cite{Acta,PT}, Whitham's highest cusped waves
must be convex between the crests:

\begin{conjecture}{(Convexity of Whitham's highest cusped
    wave~\cite[p.~4]{EW})}\label{conj}
    Whitham's highest wave $\vp$ is everywhere convex and its
    asymptotic behavior at~$0$ is
\begin{equation*}
  \vp(x)=\frac\mu 2 -\sqrt{\frac{\pi }{8}}|x|^{1/2}+o(|x|)\,.
\end{equation*}
\end{conjecture}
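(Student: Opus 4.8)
\emph{Overall plan.} The plan is not to use Ehrnstr\"om--Wahl\'en's wave as a black box but to reconstruct a highest cusped wave as the solution of a single scalar fixed-point equation, determine its exact behaviour at the crest by an asymptotic analysis, and then establish convexity by a quantitative (computer-assisted) perturbation argument around an explicit approximate solution. Normalize the period to $2\pi$ and set $u:=\tfrac{\mu}{2}-\vp\ge 0$, so that $u$ vanishes at the crest and is expected to behave like $\sqrt{|x|}$ there. Subtracting \eqref{Wphi} at $x=0$, where $\vp(0)=\tfrac{\mu}{2}$, makes the quadratic term collapse and leaves the speed-free equation
\[
u(x)^2 \;=\; Lu(x)-(Lu)(0)\;=\;\int_{\TT}K(y)\,\bigl[u(x-y)-u(y)\bigr]\,dy ,
\]
with $K$ the even, positive, unit-mass convolution kernel of $L$; the speed is recovered afterwards from $\mu^2-2\mu+4\,(Lu)(0)=0$, and any solution automatically satisfies $u(0)=0$ because the right-hand side vanishes at the crest. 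The task thus reduces to producing an even, nonnegative, $2\pi$-periodic solution $u$ and showing that the resulting $\vp=\tfrac{\mu}{2}-u$ is convex.

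\emph{Behaviour at the crest.} The structural fact I would use here is the splitting $K(x)=\tfrac{1}{\sqrt{2\pi|x|}}+K_{\mathrm{reg}}(x)$ with $K_{\mathrm{reg}}$ real-analytic, which holds because $m(n)-|n|^{-1/2}$ is exponentially small as $|n|\to\infty$ (indeed $(\tanh n)^{1/2}=1+O(e^{-2n})$). Since the cusped wave is not smooth, $u(x)=c_0|x|^{1/2}+\dots$ with $c_0\neq 0$; plugging this into the equation and using that $L$ maps $|x|^{1/2}$ to $\sqrt{\pi/8}\,|x|$ plus lower-order terms (because its symbol is $\sim|\xi|^{-1/2}$ and $\widehat{|x|^{1/2}}\propto|\xi|^{-3/2}$), the $|x|$-coefficients must match: $c_0^2=c_0\sqrt{\pi/8}$, hence $c_0=\sqrt{\pi/8}$. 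Carrying the expansion one step further, the linear relation determining the would-be $|x|$ and $|x|\log|x|$ coefficients of $u$ has a nonzero (non-resonant) coefficient, so these terms vanish and the next genuine contribution is $O(|x|^{3/2})$. This yields $u(x)=\sqrt{\pi/8}\,|x|^{1/2}+O(|x|^{3/2})$, i.e. $\vp(x)=\tfrac{\mu}{2}-\sqrt{\pi/8}\,|x|^{1/2}+o(|x|)$, and --- with the analogous control of two derivatives of the remainder --- convexity on a small one-sided neighbourhood $[0,\delta]$ of the crest, where $\vp''$ is dominated by $\tfrac14\sqrt{\pi/8}\,x^{-3/2}>0$.

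\emph{Global construction and convexity.} Away from the crest $u$ is smooth by a bootstrap using that $L$ smooths of order $\tfrac12$: $u\in C^{\al}\Rightarrow u^2=Lu-\mathrm{const}\in C^{\al+1/2}\Rightarrow u=\sqrt{u^2}\in C^{\al+1/2}$ wherever $u>0$. To pin the solution down and get $\vp''>0$ on $[\delta,\pi]$, I would construct an explicit approximate solution $u_{\mathrm{ap}}$ --- a rational combination of a singular profile carrying the $\sqrt{\pi/8}\,|x|^{1/2}$ behaviour together with a few higher half-integer powers, plus a trigonometric polynomial --- and run a Newton--Kantorovich scheme for $\mathcal F(u):=u^2-Lu+(Lu)(0)$ in a Banach space whose norm controls $\vp$ and two derivatives with weights adapted to the $|x|^{1/2}$ cusp. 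This requires three rigorous, interval-arithmetic-verifiable bounds: a smallness bound on the defect $\|\mathcal F(u_{\mathrm{ap}})\|$; a local Lipschitz bound on $D\mathcal F$; and, the crucial one, an upper bound on $\|D\mathcal F(u_{\mathrm{ap}})^{-1}\|$, obtained by splitting $D\mathcal F(u_{\mathrm{ap}})h=2u_{\mathrm{ap}}h-Lh+(Lh)(0)$ into the explicitly invertible nonlocal part (the operator $-L$ plus a rank-one term) and the degenerate multiplication $2u_{\mathrm{ap}}\,\cdot$, the latter controlled by a finite Galerkin truncation together with an analytic tail estimate powered by the exponential decay of the Fourier tail of $K_{\mathrm{reg}}$. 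From the resulting bound $\|u-u_{\mathrm{ap}}\|<\varepsilon$ one then reads off convexity: on $[\delta,\pi]$ one checks $u_{\mathrm{ap}}''<-\varepsilon$, hence $\vp''=-u''>0$, by interval arithmetic; near the crest the $x^{-3/2}$ term of the singular profile dominates $\varepsilon$; and at the trough $x=\pi$, where $\vp'(\pi)=0$ by symmetry, the same estimate gives $\vp''(\pi)\ge 0$. Since $u\ge 0$ with $u(0)=0$, the function $\vp=\tfrac{\mu}{2}-u$ is a genuine highest cusped Whitham wave, now convex and with the stated asymptotics.

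\emph{Main obstacle.} The crux is the inverse bound for the linearized operator, and no soft argument replaces it. Differentiating the equation twice gives $(L-2u)\,u''=2(u')^2\ge 0$; although $L$ has a positive unit-mass kernel, the coefficient $1-2u$ appearing in the maximum-principle estimate is not of a fixed sign, and even where it is the inequality fails to close, while the multiplier $2u$ degenerates precisely at the crest --- so $D\mathcal F$ there is a nonlocal, non-elliptic operator that cannot be inverted by a contraction. Quantifying its invertibility accurately enough that the ball $\{\|u-u_{\mathrm{ap}}\|<\varepsilon\}$ is simultaneously small enough to trap a genuine solution and to sit inside the convex cone --- which forces a delicate balancing of the Galerkin dimension, the resolution of the verification grid, and the crest cutoff $\delta$ --- is where essentially all of the effort is concentrated.
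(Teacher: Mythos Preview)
Your overall architecture---pass to the speed-free equation $u^2=\mathcal{L}u$, build an explicit approximate solution, close by a quantitative fixed-point argument with computer assistance, and read off convexity from the smallness of the correction---matches the paper. Two load-bearing steps, however, do not work as you describe them. First, your splitting of $D\mathcal{F}(u_{\mathrm{ap}})h=2u_{\mathrm{ap}}h-Lh+(Lh)(0)$ reverses the roles of the pieces: $-L$ has symbol $-m(\xi)\sim -|\xi|^{-1/2}$, so it is smoothing of order $\tfrac12$ and its formal inverse is unbounded on any fixed space; it cannot serve as the ``explicitly invertible'' main part against which the order-zero multiplication $2u_{\mathrm{ap}}\cdot$ is a perturbation. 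The paper does the opposite: it makes the multiplication the principal part by writing the correction as $|x|\,v_0$ (so that division by $2u_0\sim 2\lambda|x|^{1/2}$ is compensated by the weight), and the nonlocal piece becomes the bounded operator
\[
T_0v_0(x)=\frac{1}{2|x|u_0(x)}\int_0^\pi\bigl(K(x-y)+K(x+y)-2K(y)\bigr)\,y\,v_0(y)\,dy,
\]
whose $L^\infty\to L^\infty$ norm is computed to be exactly the constant $C_B\approx 0.9974<1$. That $I-T_0$ is invertible, but only barely ($\|(I-T_0)^{-1}\|\approx 379$), is the actual hard quantitative fact, and it forces the defect of the approximate solution down to order $10^{-7}$. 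A Galerkin-plus-tail scheme built on inverting $-L$ does not produce a contraction here, because neither summand is small and the one you call invertible is not boundedly so.

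Second, your claim that the next term after $\sqrt{\pi/8}\,|x|^{1/2}$ is $O(|x|^{3/2})$ is incorrect, and this matters for building $u_{\mathrm{ap}}$. Matching the $|x|^{1+p}$ coefficients in $u^2=\mathcal{L}u$ yields a transcendental equation for~$p$ (equation~\eqref{exponents} in the paper) whose smallest positive root is $p_0\approx 0.611$, with the next at $p_1\approx 2.762$; half-integer powers are not in this ladder, so an ansatz built from them cannot drive the defect small enough to beat the $\approx 379$ amplification. The paper's approximate solution is a combination of roughly twenty Clausen functions $C_{3/2+kp_0+jp_1}$---which carry precisely these singular exponents and, being Fourier series in $n^{-s}\cos(nx)$, interact cleanly with the multiplier $L$---together with a trigonometric correction. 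Finally, convexity is not obtained from a single weighted $C^2$ bound on $u-u_{\mathrm{ap}}$ but by setting up two further operators $T_1,T_2$ (again of norm exactly $C_B$) that govern the perturbations $u'=u_0'+v_1$ and $u''=u_0''+|x|^{-1}v_2$ separately; the sign $u''<0$ then follows from the verified inequality $|x|\,u_0''(x)<-\varepsilon_2$ together with the fixed-point bound $\|v_2\|_{L^\infty}\le\varepsilon_2$.
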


Our objective in this paper is to prove this conjecture. For concreteness,
we take Whitham's highest cusped wave of period $2\pi$, so $\vp$ can
be assumed to be a function on $\TT:=
\RR/2\pi\mathbb Z$. Our main result can be stated as follows:

\begin{theorem}\label{mainthm}
 The $2\pi$-periodic highest cusped traveling wave $\vp\in
 C^{1/2}(\TT)$ of the Whitham equation
is a convex function and behaves asymptotically as
\begin{equation}
  \label{asymp}
  \vp(x)=\frac\mu 2 -\sqrt{\frac{\pi }{8}} |x|^{1/2}+O(|x|^{1+\eta})
\end{equation}
for some $\eta>0$.
Furthermore, $\vp$ is even and strictly decreasing on the interval $[0,\pi]$.
\end{theorem}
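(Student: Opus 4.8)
The plan is a computer-assisted proof: the goal is to trap $\vp$ inside a small, fully explicit neighbourhood of a well-chosen approximate solution and then to read off convexity, the asymptotics~\eqref{asymp} and the monotonicity from that neighbourhood. It is convenient to pass to $u:=\tfrac\mu2-\vp$, so that $u\ge0$, $u$ is even, $u(0)=0$, and---this is exactly what must be shown---$u$ is concave and strictly increasing on $[0,\pi]$; since $L1=m(0)=1$, equation~\eqref{Wphi} becomes
\begin{equation*}
  Lu-u^2=\tfrac\mu2-\tfrac{\mu^2}{4}\,,
\end{equation*}
where $\mu$ is an additional unknown pinned down, together with $u$, by the $2\pi$-periodicity normalisation. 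All function spaces below are spaces of \emph{even} functions, which removes the translational zero mode from the linearised problem.

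First I would construct an explicit approximate solution $u_0$, taken to be a finite linear combination of the $2\pi$-periodisations of the homogeneous profiles $|x|^{\al}$ with $\al\in\{\tfrac12,\tfrac32,\dots\}$ (Clausen-type functions $\Cl$, on which $L$ acts explicitly through the symbol $m$) plus a trigonometric-polynomial correction. The coefficients are chosen so that the leading one is forced to equal $\sqrt{\pi/8}$---the unique value that cancels the terms of order $|x|$ in $Lu-u^2$ near the crest---and so that, on the one hand, $u_0$ is strictly concave on $(0,\pi)$ with a quantitative margin (with $u_0''(x)\le -c\,x^{-3/2}$ near $0$ and $u_0''<0$ up to and including $x=\pi$), and, on the other hand, the residual $F(u_0):=Lu_0-u_0^2-\tfrac\mu2+\tfrac{\mu^2}{4}$ is as small as possible in a norm adapted to the $|x|^{1/2}$ cusp. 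These properties, together with an explicit bound on $\|F(u_0)\|$, are verified rigorously by interval arithmetic.

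The heart of the proof---and the main obstacle---is a quantitative invertibility estimate for the linearised operator $DF(u_0)v=Lv-2u_0v$, acting between weighted spaces matched to the cusp: an explicit bound on $\|DF(u_0)^{-1}\|$. Here the extreme nature of the wave is felt acutely: $L$ smooths by only half a derivative while the potential $2u_0$ vanishes at the crest, so $DF(u_0)$ degenerates there and is \emph{not} a small perturbation of an invertible operator (away from $x=0$ the potential is of order one), which is precisely why the inverse bound is delicate. I would split $L=L_0+K$ with $L_0$ an explicit model operator reproducing the $|\xi|^{-1/2}$ tail of $m$---so that $L_0$ maps powers of $|x|$ to powers of $|x|$ with explicit $\Gamma$-function coefficients and the operator $L_0-2u_0$ can be analysed via homogeneity---treat the remainder $K$ and the lower-order part of the potential as operators whose matrices in the Clausen/cosine basis decay off the diagonal, reduce the estimate to a large-but-finite truncation plus a rigorously controlled tail, and bound the smallest singular value of the truncation by interval arithmetic. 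It is this step that dictates how small $\|F(u_0)\|$ must be, and hence how many terms $u_0$ must contain; getting all the constants to close is the main technical burden.

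Once $\|F(u_0)\|$ is small relative to $\|DF(u_0)^{-1}\|$, a Newton--Kantorovich fixed-point argument yields a genuine solution $u_*$ (and a value of $\mu$) with $\|u_*-u_0\|$ explicitly small in the weighted norm; that $\vp=\tfrac\mu2-u_*$ is Whitham's highest cusped wave follows from the uniqueness of that wave (equivalently, the solution of~\cite{EW}, being even and satisfying the same a priori bounds, lies in the contraction ball). The weighted closeness then delivers everything at once: the asymptotics~\eqref{asymp}, because $u_0(x)=\sqrt{\pi/8}\,|x|^{1/2}+O(|x|^{1+\eta})$ by construction and the correction $u_*-u_0$ is of the same order in the weighted norm; strict convexity of $\vp$ on $(0,\pi)$, because $\vp''=-u_0''-(u_*-u_0)''>0$ there---the negative margin of $u_0''$ exceeds the error, and near the crest $\vp''(x)\sim\tfrac14\sqrt{\pi/8}\,|x|^{-3/2}$; evenness, inherited from working throughout within even functions; and finally strict monotonicity on $[0,\pi]$, since evenness makes $\vp$ symmetric about $\pi$ so that $\vp'(\pi)=0$, and with $\vp''>0$ on $(0,\pi)$ the derivative $\vp'$ increases strictly from $-\infty$ at $0^+$ to $0$ at $\pi$, hence $\vp'<0$ on $(0,\pi)$.
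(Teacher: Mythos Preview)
Your plan matches the paper's at the strategic level---work with $u=\tfrac\mu2-\vp$, build $u_0$ from Clausen-type functions plus cosines, run a fixed-point argument in weighted $L^\infty$, and deduce convexity from a pointwise margin $u_0''<0$ that dominates the error. But two technical ingredients that you leave generic are in fact the crux of the paper, and without them the constants will almost certainly not close.

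The first is the choice of exponents: you take half-integers $\tfrac12,\tfrac32,\dots$, whereas the paper uses $\tfrac12+kp_0+jp_1$ with $p_0\approx0.611$, $p_1\approx2.762$ determined by the transcendental equation~\eqref{exponents}, which encodes precisely when the model linearisation at $\lambda|x|^{1/2}$ annihilates $|x|^{1/2+p}$. The true solution's formal expansion near the crest proceeds in these powers, not half-integers, and since the inverse bound turns out to be about $379$ the residuals $F_0,F_1,F_2$ must be driven below $10^{-7},10^{-6},10^{-5}$ in $L^\infty$---achievable (barely, with some twenty terms) only with the correct exponents. The second is how you invert the linearisation: you propose a splitting $L=L_0+K$, a matrix truncation in a Clausen/cosine basis, and a smallest-singular-value bound. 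The paper instead writes $u=u_0+|x|v_0$, divides the equation by $2|x|u_0$, and obtains $I-T_0$ for an integral operator $T_0$ whose $L^\infty\!\to\!L^\infty$ norm is an \emph{explicit one-dimensional integral} equal to $C_B\approx0.9974$; the first- and second-derivative equations are handled by separate ans\"atze $u'=u_0'+v_1$, $u''=u_0''+v_2/|x|$ and operators $T_1,T_2$ with, remarkably, the \emph{same} norm $C_B$. Invertibility thus reduces to one quadrature rather than a spectral computation, and the tiny gap $1-C_B$ fixes $\beta=(1-C_B)^{-1}\approx379$. A generic matrix bound on $(L-2u_0)^{-1}$ would give something looser and hence an even more punishing residual target. (A minor further difference: by using the subtracted operator $\L$ of~\eqref{L} the paper eliminates $\mu$ from the equation for $u$ altogether, recovering it afterwards from~\eqref{mueq}, so $\mu$ is not an additional unknown in the fixed-point scheme.)
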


The proof of Theorem~\ref{mainthm} is rather involved and relies in
part on computer-assisted estimates. We start off by
noticing that the function
\begin{equation}
  \label{udef}
  u(x)=\frac \mu 2-\vp(x)\,
\end{equation}
satisfies an equation that
does not explicitly depend on the parameter~$\mu$, which can
nonetheless be recovered from~$u$. Making a guess of
what~$u$ should look like, we then write
\[
u(x)= u_0(x)+|x|v_0(x)\,,
\]
where $u_0(x)\sim \sqrt{\pi/8}|x|^{1/2}$ is an explicit, carefully chosen approximate solution of the equation
and the correction term~$v_0(x)$ should then be obtained via an inverse
function theorem on $L^\infty(\TT)$. Up to a technicality (namely, that $v_0(x)$
appears in this formula with a factor of~$|x|$ instead
of~$|x|^{1+\eta}$), this proves the easier part of
Theorem~\ref{mainthm}, namely, the asymptotic formula~\eqref{asymp}.

We should emphasize, however, that this description hides three key difficulties that make the proof much harder than it
looks. A first, fairly obvious one is that the argument boils down to
estimates on $L^\infty([-\pi,\pi])$ for the linear operator
\[
T_0 f(x):= \frac1{2|x| u_0(x)}\big[ L(|\cdot| f)(x)+ L(|\cdot|
f(-\,\cdot\,))(x)-2L(|\cdot| f)(0)\big]\,,
\]
whose kernel is rather difficult to control. Indeed, the convolution
kernel of the operator~$L$
acting on $L^\infty(\TT)$ that appears in the definition
of~$T_0$ has the rather awkward
expression~\cite{EW}
\begin{equation}\label{kernel1}
Lf(x)=\int_{-\pi}^\pi K(y)\, f(x-y)\, dy\,,\qquad
  K(x)=\sum_{n=1}^\infty\int_{(n-\frac12)\pi}^{n\pi}\frac{\cosh
  [s(\pi-|x|)]}{\pi \sinh(s\pi)}\, \bigg(\frac{|\tan s|}s\bigg)^{1/2}\, ds
\end{equation}
for $x\in(-\pi,\pi)$.

A second, less obvious difficulty is that the
operator norm of~$T_0$ turns out to be very slightly smaller than~1. Therefore, the bound for the norm of $(I-T_0)^{-1}$ that we
need in the argument is large, and this has the crucial consequence
that it becomes very hard to construct an approximate solution~$u_0$
such that the associated error $T_0u_0-\frac{u_0}{2|x|}$ is small enough
in~$L^\infty(\TT)$. 

Finally, the third difficulty is that, as the solution~$\vp$ is not
smooth at the origin, one cannot effectively use ordinary or
trigonometric polynomials to construct the approximate solution~$u_0$ (which would interact well with the operator $L$),
as is customary in computer-assisted proofs, and plain powers~$|x|^s$
cannot be used to approximate the $2\pi$-periodic function~$u$ properly either as
they do not glue well at $x=\pm\pi$ and do not have simple representations whenever $L$ acts on them. Instead, to
construct~$u_0$ we utilize information about the asymptotic behavior
of the solutions at~0 and carefully concoct a linear combination of
trigonometric polynomials and Clausen functions of different orders, defined as
\begin{align}
  \label{Clausen}
  C_z(x)= \sum_{n=1}^{\infty} \frac{\cos(n x)}{n^z}\,,\quad S_z(x)= \sum_{n=1}^{\infty} \frac{\sin(n x)}{n^z}\,.
\end{align}
Suitable estimates for Clausen functions are derived in
order to obtain the required uniform bounds for the approximate
solution. Two relevant additional remarks are that choosing $u_0$ just
from asymptotic information at~0 is not possible, as the approximation
that one obtains away from zero is poor, and that $u_0$ is a
combination of~20 different terms, so carrying out the estimates
without a computer seems unwise. The need of so many terms is due to
the almost non-invertibility of $(I - T_0)$, which results in the need
for a very accurate approximate solution that cannot be constructed
using just a few explicit terms.

It should be stressed that
the hardest part of Theorem~\ref{mainthm}, that is the proof of the
convexity of the solution, is considerably more technical but is based
on the same principles, suitably strengthened to control two
derivatives of the function~$u$.
This is ultimately accomplished by solving an extended
system of equations that is controlled by three linear operators: the
aforementioned~$T_0$ and two new, more complicated operators $T_1$ and~$T_2$
that involve up to two derivatives of the (extremely messy)
approximate solution~$u_0$. Just as before, one needs to invert
$I-T_i$ for $0\leq i\leq 2$ and the norms of the three operators $T_i$
are very close but strictly less than~1.

To offer some perspective as to why these factors conspire to make the
proof so demanding without getting bogged down in technicalities,
suffice it to say that this is the first computer-assisted proof of
the existence of truly low-regularity (e.g., continuous but not~$C^1$)
solutions of any (ordinary or partial, even local) differential
equation. See
\cite{Castelli-Gameiro-Lessard:rigorous-numerics-ill-posed-pde,DeLaLlave-Sire:a-posteriori-kam-ill-posed-pde} for computer-assisted proofs of periodic solutions or KAM tori of ill-posed
PDE.

A major theme of our work is the interplay between rigorous
computer calculations and traditional mathematics; throughout the
paper we use interval arithmetics as part of a proof whenever they are needed.
Lately, computer-assisted proofs have been made possible due to the increment of computational resources.  Naturally, floating-point operations can result in numerical errors. In order to overcome these, we will employ interval arithmetics to deal with this issue. The main paradigm is the following: instead of working with arbitrary real numbers, we perform computations over intervals which have representable numbers by the computer as endpoints in order to guarantee that the true result at any point belongs to the interval by which is represented. On these objects, an arithmetic is defined in such a way that we are guaranteed that for every $x \in X, y \in Y$
\begin{align*}
x \star y \in X \star Y,
\end{align*}
for any operation $\star$. For example,
\begin{align*}
[\underline{x},\overline{x}] + [\underline{y},\overline{y}] & = [\underline{x} + \underline{y}, \overline{x} + \overline{y}] \\
[\underline{x},\overline{x}] \times [\underline{y},\overline{y}] & = [\min\{\underline{x}\underline{y},\underline{x}\overline{y},\overline{x}\underline{y},\overline{x}\overline{y}\},\max\{\underline{x}\underline{y},\underline{x}\overline{y},\overline{x}\underline{y},\overline{x}\overline{y}\}].
\end{align*}
We can also define the interval version of a function $f(X)$ as an interval $I$ that satisfies that for every $x \in X$, $f(x) \in I$. Rigorous computation of integrals has been theoretically developed since the seminal works of Moore and many others \cite{Berz-Makino:high-dimensional-quadrature,Castro-Cordoba-GomezSerrano:global-smooth-solutions-sqg,Cordoba-GomezSerrano-Zlatos:stability-shifting-muskat-II,Kramer-Wedner:adaptive-gauss-legendre-verified-computation,Lang:multidimensional-verified-gaussian-quadrature}. We also refer the reader to the books \cite{Moore-Bierbaum:methods-applications-interval-analysis,Tucker:validated-numerics-book} and to the survey \cite{GomezSerrano:survey-cap-in-pde} for a more specific treatment of computer-assisted proofs in PDE.

At this stage, it is worth mentioning why the strategy
of the proof of Theorem~\ref{mainthm} is so different from the
celebrated proof of the convexity of the highest Stokes waves. In
short, the reason is that, although Nekrasov's equation for the
interface reduces the problem to the analysis of a nonlinear, nonlocal equation
similar to Whitham's, the actual proof of the conjecture due to
Plotnikov and Toland~\cite{PT} hinges on the equivalent formulation of the
problem in terms of a harmonic function on the half-strip
$(-\infty,0)\times (-\pi,\pi)$ satisfying certain Dirichlet and
Neumann boundary conditions. In a tour de force of complex analysis,
this overdetermined boundary condition is shown to imply that the
boundary conditions of the above harmonic function can be written in
terms of a holomorphic function satisfying a certain ODE in the
complex plane. Via the Poisson kernel, this leads to writing the
derivative of the function~$\te(x)$ that describes the water interface
as
\[
\te'(x)=\int_0^\infty \frac{\Phi'(y)\,\sinh y}{\cosh y-\cos x}\, dy\,,
\]
with $\Phi'(y)>0$. Hence $\te'(x)>0$, and this automatically implies
that the interface is convex. In our case, the problem does not admit
a local description and is not amenable to the use of complex-analytic methods, so one needs to
work directly with the Whitham equation using real-variable
techniques.

Of course, the proof of Theorem~\ref{mainthm} would be much easier if
one could come up with a simpler strategy where soft analysis could be
used to bypass the need for hard estimates, but it is hard to imagine
what such a strategy could be based on. Let us briefly comment on this
important point. For instance, a first idea would be to try to adapt
the proof of Ehrnstr\"om and Wahl\'en to include as part of the
functional space the additional fact that the functions are
convex. However, some work shows that this philosophy cannot be easily
implemented since it is by no means clear how to carry out the local
or global bifurcation argument within this framework. Another obvious
idea is to carry out the global bifurcation argument directly in a
$C^{1/2}$ H\"older space. Alas, showing that $C^{1/2}$~is indeed the
sharp H\"older regularity of the resulting solution, meaning that it
does not belong to a higher space in the H\"older scale, turns out to
be highly nontrivial in this approach.

The paper is organized as follows.
In Section~\ref{S.clausen} we give some technical results concerning
generalized Clausen functions and their asymptotic behavior at $x=0$.
Equipped with these formulas, in Section~\ref{S.approx.sol.}
we construct an approximate solution~\eqref{u0} to the equation verified
by~\eqref{udef}, what we call the reduced Whitham
equation~\eqref{reducedW},
and whose error with respect to an exact
solution is small (when suitably measured in $L^\infty$)
for our purposes.
In this section we also record all the estimates involving the
approximate solution and its derivatives close to $x=0$ that we need
later on the paper.

A linearized version of the Whitham equation is studied in
Section~\ref{S.lineareq}.
Here we use a fixed point argument together
with the invertibility of $1-T_0$ to show
the existence of a solution which is an $L^\infty$ small perturbation
of our approximate solution that displays (almost) the right
asymptotic behavior claimed in Conjecture~\ref{conj}.

Section~\ref{S.convexity} is devoted to the proof of the main
Theorem~\ref{mainthm}.
We exploit the bounds for the norms of linear operators $T_0,T_1$ and
$T_2$ to set up a fixed point scheme that allows us to
conclude the convexity of the highest cusped Whitham wave.
A precise control of the error terms as well as the computation of
some explicit constants that appear along the argument is done using computer assistance.

Two appendices are given the end of the paper.
For convenience, we leave the study of the norm of $T_2$ for
Appendix~\ref{S.AppendixA} meanwhile in Appendix~\ref{S.AppendixB}, we
give the details of the computer assisted proof and numeric
computations that we use throughout the paper.








\section{Some technical lemmas about Clausen
  functions}{\label{S.clausen}}

For the benefit of the reader,
in this section we provide all the estimates for the generalized Clausen
functions introduced in~\eqref{Clausen} that we use in the rest
of the paper.
In particular, as mentioned in the introduction, these
functions play a fundamental role in the proof of
Theorem~\ref{mainthm} as they are the building blocks of the
approximate solution that we shall present in the next section.

Let us begin with the relationship between
Clausen functions and the polylogarithm
$\Li_z(s)$~\cite[Eq.~25.12.10]{DLMF}:
\begin{equation}\label{polylog}
  \Li_z(s)=\sum_{n=1}^\infty \frac{s^{n}}{n^z}.
\end{equation}
This series defines an analytic function for all complex $z$ whenever $|s|<1$
and it can be analytically continued for other values.
Further, recalling the definition of Clausen functions~\eqref{Clausen},
it is clear now that
\begin{align*}
  C_z(x)&=\frac 1 2 \big( \Li_z(e^{ix})+ \Li_z(e^{-ix}) \big)
          =\operatorname{Re}\big(\Li_z(e^{i x})\big)\,,\\
  S_z(x)&=\frac{1}{2i} \big( \Li_z(e^{ix})-\Li_z(e^{-ix}) \big)
          =\operatorname{Im}\big(\Li_z(e^{i x})\big)\,.
\end{align*}
By the well known identity~\cite[Eq.~25.12.12]{DLMF},
\[
\Li_z(s)=\Ga(1-z)\big( \log(s^{-1})
\big)^{z-1}+\sum_{n=0}^{\infty}\ze(z-n)\frac{(\log s)^n}{n!}\,,\quad
z\notin\Z_+\,, |\log(s)|<2\pi\,.
\]
one has the following series representations for $C_z$ and
$S_z$:
\begin{align}
  \label{ClausenC}
  C_z(x)&=\Ga(1-z) \sin(\tfrac \pi 2 z)|x|^{z-1}
          +\sum_{m=0}^\infty(-1)^m\ze(z-2m)\frac{x^{2m}}{(2m)!}\\
      \label{ClausenS}
  S_z(x)&=\Ga(1-z) \cos(\tfrac \pi 2 z) \sign(x)|x|^{z-1}
  +\sum_{m=0}^\infty(-1)^m\ze(z-2m-1)\frac{x^{2m+1}}{(2m+1)!}\,,
\end{align}
where $\ze(z)$ is the Riemann zeta function.
Observe that these formulas (analytically) extend the
definition~\eqref{Clausen} when $\operatorname{Re}(z)<1$ for all $x$
real.

As it will be useful later on,
in the following lemma we give uniform bounds for the lower order
terms in the above series:

\begin{lemma}\label{boundsClausen0}
  Let $z$ be a positive real number and let $M= \ceil[\Big]{\frac{z+1}{2}}$.
  Then the Clausen functions can be expressed as follows:
  \begin{align*}
    C_z(x)&= \Ga(1-z) \sin(\tfrac \pi 2 z)|x|^{z-1}+\ze(z)
          +\sum_{m=1}^{M-1}(-1)^m\ze(z-2m)\frac{x^{2m}}{(2m)!}+E_{C_z}(x)\\
  S_z(x)&= \Ga(1-z) \cos(\tfrac \pi 2 z) \sign(x)|x|^{z-1}
          +\sum_{m=0}^{M-1}(-1)^m\zeta(z-2m-1)\frac{x^{2m+1}}{(2m+1)!}
          +E_{S_z}(x)\,,
  \end{align*}
where the error terms
  \begin{align}
    \label{errorClausen}
    | E_{C_z}(x)|&\leq 2 (2\pi)^{1+z-2M}\ze(2M+1-z)
                   \frac{x^{2M}}{4\pi^2-x^2}\,,\\
     | E_{S_z}(x)|&\leq  2 (2\pi)^{z-2M}\ze(2M+2-z)
                    \frac{|x|^{2M+1}}{4\pi^2-x^2}\,. 
  \end{align}
  
\end{lemma}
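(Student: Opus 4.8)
The plan is to start from the exact series representations~\eqref{ClausenC} and~\eqref{ClausenS}, isolate the terms with index $0\le m\le M-1$, and identify everything with $m\ge M$ as the error term $E_{C_z}$ (resp.\ $E_{S_z}$). For the cosine case, the $m=0$ contribution is $\ze(z)$ and the remaining explicit terms are $\sum_{m=1}^{M-1}(-1)^m\ze(z-2m)\frac{x^{2m}}{(2m)!}$, so that
\[
E_{C_z}(x)=\sum_{m=M}^\infty(-1)^m\ze(z-2m)\frac{x^{2m}}{(2m)!}\,,
\qquad
E_{S_z}(x)=\sum_{m=M}^\infty(-1)^m\ze(z-2m-1)\frac{x^{2m+1}}{(2m+1)!}\,.
\]
The point of choosing $M=\ceil{\frac{z+1}{2}}$ is precisely that for $m\ge M$ the argument $z-2m$ (resp.\ $z-2m-1$) is $\le -1$, i.e.\ we are in the region where the reflection formula for $\ze$ gives clean, uniform control; in particular the zeta values appearing in the tail are evaluated at negative arguments bounded away from the pole at $1$.

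The key step is then to bound $|\ze(z-2m)|$ and $|\ze(z-2m-1)|$ for $m\ge M$ using the functional equation
\[
\ze(w)=2^{w}\pi^{w-1}\sin\!\Big(\tfrac{\pi w}{2}\Big)\Ga(1-w)\,\ze(1-w)\,,
\]
applied with $w=z-2m$ (or $w=z-2m-1$). Since $1-w=1-z+2m\ge 2$ in the relevant range, $\ze(1-w)$ is bounded by $\ze(2)$ and in fact tends to $1$; the sine factor is bounded by $1$; and the combination $\pi^{w-1}\Ga(1-w)$ together with the factorial $(2m)!$ in the denominator telescopes. Concretely, I would combine $\Ga(1-z+2m)$ with $(2m)!$ so that the ratio is controlled, and absorb the powers of $2$ and $\pi$; after the dust settles one gets, for each $m\ge M$, a bound of the form $|\ze(z-2m)|\,\frac{|x|^{2m}}{(2m)!}\le 2\,(2\pi)^{1+z-2m}\,\ze(2m+1-z)\,|x|^{2m}/(2\pi)^{?}$ — the precise bookkeeping is the routine part. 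Summing the resulting geometric-type series in $m$, bounded above by $\sum_{m\ge M}(|x|/2\pi)^{2m}$ times the $m=M$ coefficient, produces the factor $\frac{x^{2M}}{4\pi^2-x^2}$ (using $\sum_{k\ge 0}(x/2\pi)^{2k}=\frac{4\pi^2}{4\pi^2-x^2}$ and pulling out $(2\pi)^{-2}$), and monotonicity of $\ze$ on $(1,\infty)$ lets us replace all the tail zeta values $\ze(2m+1-z)$ by the single value $\ze(2M+1-z)$ at $m=M$. The sine case is identical with $2m$ replaced by $2m+1$ throughout, which shifts the exponents by one and yields the stated $(2\pi)^{z-2M}$ and $\ze(2M+2-z)$.

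The main obstacle I anticipate is purely one of careful constant-tracking: making sure that after applying the functional equation, recombining $\Ga(1-z+2m)/(2m)!$, and re-summing, one lands exactly on the constant $2$ and the exact powers $(2\pi)^{1+z-2M}$ and $(2\pi)^{z-2M}$ claimed in~\eqref{errorClausen}, rather than something merely of the same order. One has to be slightly careful that $|x|<2\pi$ is needed for convergence (which is fine since ultimately $x\in[-\pi,\pi]$), that the $\Ga(1-z)$ prefactor term in~\eqref{ClausenC}–\eqref{ClausenS} is genuinely separate from the zeta series and is \emph{not} part of the error, and that when $z$ is such that $\frac{z+1}{2}$ is an integer the definition of $M$ still puts us safely in the convergence/monotonicity regime. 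No hard analysis is involved beyond the classical reflection formula for $\ze$ and elementary geometric series estimates.
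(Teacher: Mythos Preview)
Your approach is correct and essentially identical to the paper's: start from~\eqref{ClausenC}, apply the functional equation~\eqref{identz} to each tail term to rewrite $(-1)^m\ze(z-2m)/(2m)!$ as $\sin(\tfrac{\pi}{2}z)\,\dfrac{\Ga(2m+1-z)}{\Ga(2m+1)}\,2^{z-2m}\pi^{z-2m-1}\ze(2m+1-z)$, then bound $|\sin|\le 1$, use monotonicity of~$\Ga$ on $[2,\infty)$ to get $\Ga(2m+1-z)/\Ga(2m+1)\le 1$ (this is the precise form of the ``ratio is controlled'' step you left vague), use monotonicity of~$\ze$ to replace $\ze(2m+1-z)$ by $\ze(2M+1-z)$, and sum the remaining exact geometric series in $(x/2\pi)^2$. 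The constant-tracking you flagged as the only obstacle is indeed routine and lands exactly on the stated constants.
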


\begin{proof}
As they are similar, for simplicity we only
prove the estimate for $C_z(x)$.
From \eqref{ClausenC} we have that for any positive integer $M$,
\begin{align*}
C_z(x)&= \Ga(1-z) \sin(\tfrac \pi 2 z)|x|^{z-1}+\ze(z)
+\sum_{m=1}^{M-1}(-1)^m\ze(z-2m)\frac{x^{2m}}{(2m)!}\\
&\qquad \qquad \qquad
   +\sin(\tfrac \pi 2 z)\sum_{m=M}^{\infty}\frac{\Ga(2m+1-z)}{\Ga(2m+1)}
            2^{z-2m}\pi^{z-2m-1}\ze(2m+1-z) x^{2m}\,,
\end{align*}
where we have used that $\Ga(2m+1)=(2m)!$ and the functional identity
\begin{equation}
  \label{identz}
  \ze(s)=2^{s}\pi^{s-1}\sin(\tfrac \pi 2 s)\Ga(1-s)\ze(1-s)\,,
\end{equation}
which is valid for all $s\in\C$.

Since $\ze(s)$ and $\Ga(s)$ are, respectively,
monotonically decreasing and
increasing functions on $s>2$,
by taking $M= \ceil[\Big]{\frac{z+1}{2}}$,
we arrive at
\begin{align*}
|E_{C_z}(x)|&\leq  \ze(2M+1-z) |\sin(\tfrac \pi 2 z)| \sum_{m=M}^{\infty}2^{z-2m}\pi^{z-2m-1}x^{2m}\,.
\end{align*}
Noticing that the above infinite sum converges for $|x|\leq \pi$,
\[
  \sum_{m=M}^{\infty}2^{z-2m}\pi^{z-2m-1}x^{2m}=
2  (2\pi)^{1+z-2M}\frac{ x^{2M}}{4\pi^2-x^2}\,,
\]
the estimate for $C_z$ follows.
\end{proof}

\section{Approximate solution}\label{S.approx.sol.}

Our aim here is to introduce an approximate solution $u_0$ to
the Whitham equation satisfied by~\eqref{udef}
and study its asymptotic behavior at $x=0$.
Making use of the estimates derived in the previous section,
we will be able to control the $L^\infty$-norm of the error made with
respect to an exact solution and prove
that it is sufficiently small for the purposes of the fixed point
iteration scheme that we set up later in the paper.


Let us begin by introducing the linear operator
\begin{equation}
  \label{L}
\L u= \frac12 \int_{-\pi}^{\pi } \big( K(x-y)+K(x+y)-2K(y) \big) u(y)\,,
\end{equation}
with $K(x)$ as in~\eqref{kernel1}.
Furthermore, it will be useful to express the kernel $K$ alternatively
as the Fourier series~\cite{EW}
\begin{equation}
  \label{kernel2}
  K(x)=\frac{1}{2\pi}\sum_{n\in \Z} m(n) e^{i nx}=\frac{1}{\pi}\sum_{n=1}^{\infty} m(n) \cos(n x)\,,
\end{equation}
where in the second equality we have used the parity of $m(n)=\sqrt{\dfrac{\tanh(n)}{n}}$.
\begin{remark}\label{R.K}
  Notice that by the representation of the Clausen
  function $C_{1/2}$ given in Lemma~\ref{boundsClausen0},
  the above kernel satisfies
\[
 K(x)=\frac{1}{\sqrt{2\pi |x|}}+E_{\mathrm{reg}}(x)\,,\qquad
 E_{\mathrm{reg}}(x)=
 E_{C_{\frac12}}(x)+\frac1\pi\sum_{n=1}^\infty
 \frac{1-\sqrt{\tanh(n)}}{\sqrt{n}}\cos(nx) \,,
\]
which agrees with the description given in \cite[Prop. 3.1]{EW}.
\end{remark}

Through the paper we will take advantage of the fact that
$u$ defined as in~\eqref{udef} satisfies
a quadratic equation that does not depend explicitly on the parameter $\mu$:
\begin{prop}\label{asympu0}
Let $\vp(x)$ be a solution of \eqref{Wphi}.
Then, the function $u(x)=\frac \mu 2 -\vp(x)$ satisfies the reduced Whitham equation
\begin{equation}
  \label{reducedW}
  (u(x))^2=\L u(x)\,,
\end{equation}
where the wave-speed $\mu$ is recovered through
\begin{equation}
  \label{mueq}
  \mu \Big( 1-\frac{\mu}{ 2} \Big)=4\int_{0}^{\pi} K(y) u(y)\,dy\,.
\end{equation}
\end{prop}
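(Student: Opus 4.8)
The plan is to derive the reduced Whitham equation~\eqref{reducedW} directly from~\eqref{Wphi} by substituting $\vp = \frac\mu2 - u$ and using the linearity of $L$. The key observation is that $L$ applied to a constant $c$ satisfies $Lc = m(0)c = c$, since $m(0) = \lim_{\xi\to 0}(\tanh\xi/\xi)^{1/2} = 1$; equivalently, the constant term in the Fourier series~\eqref{kernel2} is $\frac{1}{2\pi}m(0)\int_{-\pi}^\pi e^{i0\cdot x}\,dx$-normalized so that $\int_{-\pi}^\pi K(y)\,dy = m(0) = 1$. First I would compute $L\vp = L(\frac\mu2) - Lu = \frac\mu2 - Lu$, and $\vp^2 = \frac{\mu^2}{4} - \mu u + u^2$, so that~\eqref{Wphi} becomes $\frac\mu2 - Lu - \mu(\frac\mu2 - u) + \frac{\mu^2}{4} - \mu u + u^2 = 0$. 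The terms $-\frac{\mu^2}{2} + \frac{\mu^2}{4}$ and $+\mu u - \mu u$ combine, leaving $\frac\mu2 - \frac{\mu^2}{4} - Lu + u^2 = 0$, i.e. $u^2 = Lu - (\frac\mu2 - \frac{\mu^2}{4})$.

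The remaining step is to remove the explicit $\mu$-dependence by passing from $L$ to the operator $\L$ defined in~\eqref{L}. By construction, $\L u(x) = Lu(x) - Lu(0) - \tfrac12\big(L u(\cdot)(x)-Lu(-\cdot)(x)\big)$-type symmetrization; more precisely, writing out~\eqref{L} and using that $\int K(x-y)u(y)\,dy = Lu(x)$ while $\int K(x+y)u(y)\,dy = Lu(-\cdot)$ evaluated appropriately and $\int K(y)u(y)\,dy$ is the constant $Lu(0)$ when $u$ is even, one sees that $\L u(x) = Lu(x) - Lu(0)$ once we know $u$ is even (which follows from the evenness of $\vp$ asserted in Theorem~\ref{mainthm}, or can be arranged by the symmetrization built into $\L$). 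Hence from the previous paragraph, $u(x)^2 = Lu(x) - (\frac\mu2 - \frac{\mu^2}{4})$; evaluating at $x=0$ and using $u(0)=0$ gives $0 = Lu(0) - (\frac\mu2-\frac{\mu^2}{4})$, so $\frac\mu2 - \frac{\mu^2}{4} = Lu(0)$. Substituting back yields $u(x)^2 = Lu(x) - Lu(0) = \L u(x)$, which is~\eqref{reducedW}.

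For the wave-speed formula~\eqref{mueq}, I would use the relation $\frac\mu2 - \frac{\mu^2}{4} = Lu(0) = \int_{-\pi}^\pi K(y)u(y)\,dy$ just obtained, and then exploit the evenness of both $K$ and $u$ to fold the integral: $\int_{-\pi}^\pi K(y)u(y)\,dy = 2\int_0^\pi K(y)u(y)\,dy$. Multiplying through by $2$ gives $\mu - \frac{\mu^2}{2} = 4\int_0^\pi K(y)u(y)\,dy$, i.e. $\mu(1-\frac\mu2) = 4\int_0^\pi K(y)u(y)\,dy$, which is exactly~\eqref{mueq}.

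The only genuine subtlety -- and the place I would be most careful -- is the identity $Lc = c$ for constants, i.e. the normalization $\int_{-\pi}^\pi K(y)\,dy = m(0) = 1$; this is what makes the $\mu^2$ and linear $\mu u$ terms cancel cleanly and is implicit in the Fourier representation~\eqref{kernel2}. One must also be slightly attentive to the evenness hypothesis used to identify $\L u$ with $Lu(\cdot)-Lu(0)$ and to fold the integral in~\eqref{mueq}; since Whitham's highest cusped wave is even (this is part of the conclusion of Theorem~\ref{mainthm} and is already known from~\cite{EW}), this is harmless, but it should be stated. Everything else is elementary algebra with the linear operator $L$.
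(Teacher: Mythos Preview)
Your proof is correct and is precisely the direct computation the paper has in mind; the proposition is stated without proof in the paper. The algebra $\vp=\tfrac\mu2-u$, $L1=m(0)=1$, and the identification $\L u(x)=Lu(x)-Lu(0)$ for even $u$ (via the evenness of $K$ and the change of variables $y\mapsto -y$ in $\int K(x+y)u(y)\,dy$) are exactly right.

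One point deserves to be made explicit rather than buried: your passage from $u(x)^2=Lu(x)-(\tfrac\mu2-\tfrac{\mu^2}{4})$ to~\eqref{reducedW} uses $u(0)=0$, i.e.\ $\vp(0)=\tfrac\mu2$. This is not a consequence of~\eqref{Wphi} alone but of $\vp$ being the \emph{highest} wave (the crest touches the critical level $\mu/2$, as established in~\cite{EW}). Since the proposition is phrased ``Let $\vp$ be a solution of~\eqref{Wphi}'', you should state this hypothesis up front; without it, the computation only yields $\L u(x)=u(x)^2-u(0)^2$ and~\eqref{mueq} fails by the amount $2u(0)^2$. This is clearly the paper's intended context (cf.\ Remark~\ref{R.mu} immediately following), so it is a matter of presentation rather than a gap.
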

\begin{remark}\label{R.mu}
  Notice that in view of the Galilean transformation
  \[
    \mu\mapsto 2-\mu\,,\quad \vp\mapsto \vp+1-\mu\,,
  \]
  solutions $\vp$ to~\eqref{Wphi}
  with wave-speed $\mu\in [1,2]$ are mapped bijectively to
  solutions for $\mu\in [0,1]$ with maxima in $[0,\mu/2]$.
  Since $K$ and $u$ are positive by~\eqref{kernel1} and the fact that
  $\vp<\frac\mu 2$, the quadratic equation~\eqref{mueq} for $\mu$
  has only one root in $[0,1]$, which is precisely the value of $\mu$
  associated to the highest wave $\vp$.
\end{remark}

As we will show in the next section, \eqref{reducedW} imposes
strong restrictions on the asymptotic behavior of the solution $u$.
In particular, since Remark~\ref{R.K} entails that
\begin{equation*}
  \int_0^\pi \big(C_{\frac 12}(x-y)+C_{\frac 12} (x+y)-2 C_{\frac 12}(y)\big)\sqrt{y}\,dy
  =\frac \pi 2 |x|+O(x^2)\,,
\end{equation*}
by using the formula \eqref{ClausenC} one can easily show the
following asymptotic formula:
\begin{prop}\label{L.lambda}
Let $\la>0$ and assume that $u$ is a solution of \eqref{reducedW} with the asymptotic behavior
\[
u(x)=\la \sqrt{|x|}+O( |x|^{\frac 12 +p})\,,\quad
p>0\,,
\]
close to $|x|=0$.
Then the constant $\la$ must take the value
\begin{equation}
  \la=\sqrt{\frac{\pi}{8}}\,.\label{lambda}
\end{equation}
\end{prop}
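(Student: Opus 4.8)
The plan is to substitute the assumed asymptotic expansion $u(x)=\la\sqrt{|x|}+O(|x|^{1/2+p})$ into the reduced Whitham equation~\eqref{reducedW} and match the leading-order terms as $x\to 0$. First I would compute the left-hand side: squaring gives $(u(x))^2=\la^2|x|+O(|x|^{1+p})$, so the leading behavior near the origin is $\la^2|x|$. The task is then to show that the right-hand side $\L u(x)$ also behaves like $\frac{\pi}{8}|x|$ to leading order, independently of $\la$ up to the overall factor $\la$, so that matching forces $\la^2=\frac{\pi}{8}\la$, hence $\la=\sqrt{\pi/8}$ (the root $\la=0$ being excluded since $\la>0$).

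The key step is therefore to evaluate the leading contribution of $\L u(x)=\frac12\int_{-\pi}^\pi\bigl(K(x-y)+K(x+y)-2K(y)\bigr)u(y)\,dy$ near $x=0$. Using Remark~\ref{R.K}, I would split $K(x)=\frac{1}{\sqrt{2\pi|x|}}+E_{\mathrm{reg}}(x)$, where $E_{\mathrm{reg}}$ is (more) regular at the origin. The regular part $E_{\mathrm{reg}}$, being built from $E_{C_{1/2}}$ and an absolutely convergent cosine series with bounded second derivative (the symmetrized combination $E_{\mathrm{reg}}(x-y)+E_{\mathrm{reg}}(x+y)-2E_{\mathrm{reg}}(y)$ vanishes to second order in $x$), contributes only $O(x^2)$, which is absorbed into the error. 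For the singular part $\frac{1}{\sqrt{2\pi|x|}}$, one recognizes via Lemma~\ref{boundsClausen0} (applied with $z=\tfrac12$, so $\Ga(1/2)\sin(\pi/4)=\sqrt{\pi}\cdot\tfrac{1}{\sqrt2}=\sqrt{\pi/2}$, giving $C_{1/2}(x)=\sqrt{\tfrac{\pi}{2}}\,|x|^{-1/2}+O(1)$) that the symmetrized singular kernel matches $C_{1/2}(x-y)+C_{1/2}(x+y)-2C_{1/2}(y)$ up to the constant $\sqrt{2\pi}$, reducing the computation to the integral identity already quoted in the excerpt,
\[
\int_0^\pi\bigl(C_{1/2}(x-y)+C_{1/2}(x+y)-2C_{1/2}(y)\bigr)\sqrt{y}\,dy=\frac{\pi}{2}|x|+O(x^2)\,.
\]
Combining the factor $\frac12$ from the definition of $\L$, the normalization $\frac{1}{\sqrt{2\pi}}$ relating $K$'s singular part to $C_{1/2}$, and the value $\sqrt{\pi/2}$ of the prefactor of the singularity in $C_{1/2}$, one finds $\L(\la\sqrt{|\cdot|})(x)=\frac{\pi}{8}\la|x|+O(x^2)$ — this is precisely the $\frac\pi8|x|$ announced before the proposition. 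Here one must also check that replacing $u(y)$ by $\la\sqrt{y}$ on $[0,\pi]$ in the integral introduces only a lower-order error: the difference $u(y)-\la\sqrt{y}=O(y^{1/2+p})$ integrated against the symmetrized kernel (which near $y=0$ behaves like $x$ times an integrable-in-$y$ bound, and is smooth for $y$ away from $0$) gives $O(|x|^{1+\min(p,\,\cdot)})=o(|x|)$.

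The main obstacle I expect is the bookkeeping of the numerical constants — tracking the factors $\Ga(1-z)\sin(\tfrac\pi2 z)$ at $z=\tfrac12$, the $\frac{1}{\sqrt{2\pi}}$ from Remark~\ref{R.K}, and the $\frac12$ in $\L$ — so that they genuinely collapse to $\frac\pi8$, together with making rigorous that the regular kernel $E_{\mathrm{reg}}$ and the tail $u(y)-\la\sqrt y$ contribute strictly higher powers of $|x|$ than the first. Both of these are routine but delicate; once they are in place, the conclusion $\la^2=\tfrac\pi8\la$, i.e.\ $\la=\sqrt{\pi/8}$, is immediate from equating the coefficients of $|x|$ on the two sides of $(u(x))^2=\L u(x)$.
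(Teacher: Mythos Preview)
Your strategy---substitute the ansatz, expand both sides of $(u)^2=\L u$ to leading order in $|x|$, and match coefficients---is exactly what the paper does (indeed the paper only sketches this and displays the key integral). However, the numerics are wrong in two places, and since the entire content of the proposition is a numerical constant, these are genuine gaps.

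First, the matching equation you write, ``$\la^2=\tfrac{\pi}{8}\la$, hence $\la=\sqrt{\pi/8}$'', is incorrect algebra: from $\la^2=c\la$ with $\la>0$ one gets $\la=c$, not $\la=\sqrt c$. So if the leading term of $\L(\la\sqrt{|\cdot|})$ were really $\tfrac{\pi}{8}\la\,|x|$, you would conclude $\la=\tfrac{\pi}{8}$, which is false.

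Second, the constant $\tfrac{\pi}{8}$ you obtain for $\L(\la\sqrt{|\cdot|})/(\la|x|)$ is itself wrong; the correct value is $\sqrt{\pi/8}$. Here is the clean bookkeeping. From Remark~\ref{R.K} (or directly from~\eqref{kernel2}) one has $K=\tfrac{1}{\pi}C_{1/2}+(\text{smooth})$, since the singular parts have ratio $\tfrac{1/\sqrt{2\pi}}{\sqrt{\pi/2}}=\tfrac{1}{\pi}$. For even integrands the $\tfrac12\int_{-\pi}^{\pi}$ in~\eqref{L} becomes $\int_0^\pi$, so
\[
\L(\la\sqrt{|\cdot|})(x)=\frac{\la}{\pi}\int_0^\pi\bigl(C_{1/2}(x-y)+C_{1/2}(x+y)-2C_{1/2}(y)\bigr)\sqrt{y}\,dy+O(x^2).
\]
A short Fourier computation (write the symmetrized $C_{1/2}$-kernel as $2\sum_n n^{-1/2}(\cos nx-1)\cos ny$, integrate term by term, and use $C_2(x)-\ze(2)=-\tfrac{\pi}{2}|x|+O(x^2)$) gives the value $(\pi/2)^{3/2}|x|+O(x^2)$ for this integral; the $\tfrac{\pi}{2}|x|$ printed in the paper appears to be a typo. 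Hence $\L(\la\sqrt{|\cdot|})(x)=\tfrac{1}{\pi}(\pi/2)^{3/2}\la\,|x|+O(x^2)=\sqrt{\pi/8}\,\la\,|x|+O(x^2)$, the matching reads $\la^2=\sqrt{\pi/8}\,\la$, and now $\la=\sqrt{\pi/8}$ follows correctly.

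Your treatment of the lower-order contributions (the regular part $E_{\mathrm{reg}}$ of $K$ and the tail $u(y)-\la\sqrt{y}=O(y^{1/2+p})$) is fine.
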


An approximate solution to the reduced Witham equation is given
now in terms of Clausen functions and trigonometric polynomials.
We postpone to the subsequent section the construction of an actual
solution of~\eqref{reducedW} with the desired behavior at $x=0$.
\begin{defi}
  Let $p_0\,, p_1$ be positive numbers given by
\begin{equation}
  \label{exponents}
  \frac{\Ga(-1/2-p_j)}{\Ga(-1-p_j)}
  \big(1-\cot(\tfrac \pi 2 p_j) \big)=\frac{2}{\sqrt{\pi}}\,,
\end{equation}
with $0<p_0<1$ and $2<p_1<3$. Then, we define
  \begin{equation}
  \label{u0}
  u_0(x)= \sum_{k=0}^{N_j} \sum_{j=0}^{1} a_{jk}
          \big(\zeta\big(3/2+kp_0+jp_1\big)-C_{\frac 3 2
          +kp_0+jp_1}(x)\big)+\sum_{n=1}^{N_2} b_{n}
          \big(\cos(n x)-1\big)\,,
\end{equation}
in which the coefficients $a_{jk}$ and $b_n$ are real and
$N_{0}, N_1 $ and $N_2$ are fixed non-negative integers.
\end{defi}
In view of this definition and the formulas~\eqref{ClausenC} and
\eqref{ClausenS}, we have in addition that the derivatives of $u_0$ can
be written as
\begin{align}
  \label{u0'}
  u_0'(x)&= \sum_{k=0}^{N_j} \sum_{j=0}^{1} a_{jk}
        S_{\frac 1 2 + kp_0+jp_1}(x)-\sum_{n=1}^{N_2} n b_{n} \sin(n x)\,,
\end{align}

\begin{align}
  \label{u0''}
  u_0''(x)&= \sum_{k=0}^{N_j} \sum_{j=0}^{1} a_{jk}
    C_{-\frac 1 2 + kp_0+jp_1}(x)-\sum_{n=1}^{N_2} n^2 b_{n} \cos(n x)\,.
\end{align}
Moreover, the explicit values of the numbers $p_0,p_1$ can be enclosed
with high precision
\begin{lemma}\label{L.p0p1}
  The solutions $p_0,p_1$ of the equation~\eqref{exponents} are
  \[
    p_0=0.611\ldots\,,\quad p_1=2.762\ldots
   \]
\end{lemma}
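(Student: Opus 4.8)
The plan is to reduce the transcendental equation~\eqref{exponents} to a concrete scalar root-finding problem on each of the two intervals $(0,1)$ and $(2,3)$, and then to carry out the enclosure by a rigorous interval Newton (or bisection) scheme. First I would rewrite the left-hand side of~\eqref{exponents} in a form better suited to evaluation and sign analysis. Using the standard identity $\Ga(-1/2-p)=(-3/2-p)\,\Ga(-3/2-p)$ is not quite what is needed; instead, since $\Ga(-1-p)=(-1/2-p)^{-1}\,\Ga(-1/2-p)\cdot(-1-p)$ — more cleanly, $\frac{\Ga(-1/2-p)}{\Ga(-1-p)}=\frac{\Ga(-1/2-p)}{\Ga(-1-p)}$ can be collapsed via $\Ga(s+1)=s\,\Ga(s)$ applied to $s=-3/2-p$ and $s=-2-p$, giving $\frac{\Ga(-1/2-p)}{\Ga(-1-p)} = \frac{(-3/2-p)\Ga(-3/2-p)}{(-2-p)\Ga(-2-p)}$, and one can keep reducing until the arguments land in a region where $\Ga$ is monotone and its interval evaluation is well-conditioned. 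Equivalently, one may simply use the reflection formula $\Ga(1-s)\Ga(s)=\pi/\sin(\pi s)$ to express the ratio through $\sin$ and a ratio of $\Ga$'s with positive arguments. The upshot is that the function
\[
F(p):=\frac{\Ga(-1/2-p)}{\Ga(-1-p)}\big(1-\cot(\tfrac\pi2 p)\big)-\frac{2}{\sqrt\pi}
\]
is real-analytic on $(0,1)$ (the pole of $\cot(\tfrac\pi2 p)$ at $p=0$ and the poles of the Gamma ratio are avoided in the open interval) and likewise on $(2,3)$.

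Next I would establish existence and uniqueness of a root in each interval. For existence it suffices to exhibit sign changes: evaluate $F$ with interval arithmetic at a few representable points near the endpoints to see $F$ passes through $0$; for instance near $p=0^+$ the factor $1-\cot(\tfrac\pi2 p)\to-\infty$ while the Gamma ratio $\to\Ga(-1/2)/\Ga(-1)=0$ (since $1/\Ga(-1)=0$), so a more careful local expansion is needed there — concretely, $1/\Ga(-1-p)\sim -(1+p)\cdot\big(\text{something analytic}\big)$ vanishes linearly as $p\to0$, and $\cot(\tfrac\pi2 p)\sim \frac{2}{\pi p}$ blows up linearly, so the product has a finite nonzero limit that I would compute and compare to $2/\sqrt\pi$. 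Having localized a sign change to a small sub-interval, uniqueness on $(0,1)$ (resp. $(2,3)$) follows either from a rigorous computation showing $F'$ has constant sign on that interval via an interval enclosure of $F'$, or — more robustly — from the interval Newton operator being a contraction on the bracketing interval, which simultaneously proves existence, uniqueness, and produces the enclosure. This is exactly the kind of task handled by the interval-arithmetic machinery described in the introduction and detailed in Appendix~\ref{S.AppendixB}.

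Finally, the enclosure itself: starting from a crude bracket $[a,b]\subset(0,1)$ with $F(a)F(b)<0$, one iterates the interval Newton step $p\mapsto m-F(m)/F'([a,b])$, where $m$ is the midpoint, intersecting with $[a,b]$ at each stage; a few iterations contract the bracket to width below $10^{-3}$, certifying $p_0=0.611\ldots$ and, by the identical procedure on $(2,3)$, $p_1=2.762\ldots$. The one genuine subtlety — the ``hard part'', though it is more bookkeeping than conceptual — is the accurate and rigorous interval evaluation of the ratio $\Ga(-1/2-p)/\Ga(-1-p)$ at negative non-integer arguments: naive evaluation is catastrophically ill-conditioned near the poles, so one must first algebraically cancel the singular factors (pulling out the $(-1/2-p)^{-1}$, $(-1-p)^{-1}$, etc.\ via the recurrence until reaching positive arguments, or passing through the reflection formula) so that the interval library only ever evaluates $\Ga$ and $\sin$ on well-separated, well-conditioned inputs. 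Once this reformulation is in place the sign analysis and the Newton contraction are routine, and the stated decimal enclosures follow.
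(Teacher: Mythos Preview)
Your proposal is correct and follows essentially the same approach as the paper: a rigorous interval root-finding scheme (sign-change isolation, then Newton refinement) applied to $F(p)$, with the computation delegated to the interval-arithmetic library described in Appendix~\ref{S.AppendixB}. The paper's proof is even more terse---it simply reports the initial brackets $[0.5125,0.75]$ and $[2.625,2.875]$ and lets the Arb library handle the Gamma evaluations directly, so your discussion of algebraically pre-conditioning the ratio $\Ga(-1/2-p)/\Ga(-1-p)$ via the recurrence or reflection formula, while sound, is more than what the paper actually records (Arb's polylogarithm and Gamma routines absorb that work internally).
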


A key feature of the approximate solution $u_0$ is precisely its asymptotic
behavior near $x=0$. In fact, the bounds shown in
Lemma~\ref{boundsClausen0} imply the following asymptotic expansions
that we give without proof as they involve tedious but largely
standard computations: 
\begin{lemma}\label{L.estu0}
  Let $u_0$ be a function of the form \eqref{u0} and let
  $M$ be the smallest integer such that
  $M\geq  3/2+\max\{N_0p_0, N_1 p_0+p_1\}$.
  Then, the following asymptotic expansions hold near
  $x=0$:
\begin{multline}
    \label{asympu0}
  u_0(x)=\sum_{k=0}^{N_j}\sum_{j=0}^1  a^{0}_{jk}
  |x|^{\frac 12+kp_0+jp_1}
  +\Big(a^0_1-\frac12\sum_{n=1}^{N_2}n^2b_n\Big)x^2
+ \Big(a^0_2+\frac{1}{24} \sum_{n=1}^{N_2}n^4b_n\Big)x^4
   +E_{u_0}(x)\,,
 \end{multline}
 \begin{multline}
        \label{asympu1}
 u'_0(x)=\sum_{k=0}^{N_j}\sum_{j=0}^1  a^{1}_{jk} |x|^{-\frac12+kp_0+jp_1}
   +\Big(a^1_0-\sum_{n=1}^{N_2}n^2b_n\Big)|x|
+ \Big(a^1_1+\frac{1}{6} \sum_{n=1}^{N_2}n^4b_n\Big)|x|^3
+E_{u'_0}(x)\,,
\end{multline}
and
 \begin{multline}
            \label{asympu2}
u''_0(x)=\sum_{k=0}^{N_j}\sum_{j=0}^1  a^{2}_{jk} |x|^{-\frac 32+kp_0+jp_1}
+\Big(a^2_1-\sum_{n=1}^{N_2}n^2b_n\Big)
+ \Big(a^2_2+\frac{1}{2} \sum_{n=1}^{N_2}n^4b_n\Big)x^2
    +E_{u''_0}(x)\,,
\end{multline}
  where
\begin{align*}
   a^0_{jk} &=-\Ga(-1/2 -kp_0-jp_1)
              \sin\big(\tfrac \pi2 (\tfrac 32 +k p_0+jp_1)\big) a_{jk}\,,\\
  a^0_m&=\frac{(-1)^{m+1}}{(2m)!}\sum_{k=0}^{N_j}\sum_{j=0}^1 a_{jk}
              \ze(3/2+kp_0+jp_1-2m)\,,\quad m=1,2\,,
  \\
  a^1_{jk} &=\Ga(1/2 -kp_0-jp_1)
             \cos\big(\tfrac \pi2 (\tfrac 12 +k p_0+jp_1)\big)
             a_{jk}\,,\\
  a^1_m&=\frac{(-1)^{m}}{(2m+1)!}\sum_{k=0}^{N_j}\sum_{j=0}^1 a_{jk}
              \ze(-1/2+kp_0+jp_1-2m)\,, \quad m=0,1\,,\\
    a^2_{jk} &=-\Ga(3/2 -kp_0-jp_1)
               \sin\big(\tfrac \pi2 (-\tfrac 12 +k p_0+jp_1)\big) a_{jk}\\
    a^2_m&=\frac{(-1)^{m}}{(2m)!}\sum_{k=0}^{N_j}\sum_{j=0}^1 a_{jk}
              \ze(-1/2+kp_0+jp_1-2m)\,, \quad m=1,2\,,
\end{align*}
and
\begin{multline}\label{erroru0}
|E_{u_0}(x)|\leq
2 (2\pi)^{5/2-2M}\sum_{k=0}^{N_j}\sum_{j=0}^1
(2\pi)^{kp_0+jp_1} |\ze(2M-1/2-kp_0-jp_1)a_{jk}  |
\frac{x^{2M}}{4\pi^2-x^2}\\
+\frac{x^6}{6!}\sum_{n=1}^{N_2}n^6|b_n|+
\sum_{m=3}^{M-1}|a^0_m| x^{2m}
\,,
\end{multline}

\begin{multline}\label{erroru0'}
|E_{u'_0}(x)|\leq
2 (2\pi)^{3/2-2M}\sum_{k=0}^{N_j}\sum_{j=0}^1
(2\pi)^{kp_0+jp_1} |\ze(2M+3/2-kp_0-jp_1)a_{jk}  | \frac{|x|^{2M+1}}{4\pi^2-x^2}\\
+\frac{|x|^5}{5!}\sum_{n=1}^{N_2}n^6|b_n|+
\sum_{m=2}^{M-1}|a^1_m| |x|^{2m+1}\,.
\end{multline}

\begin{multline}
|E_{u''_0}(x)|\leq
2 (2\pi)^{1/2-2M}\sum_{k=0}^{N_j}\sum_{j=0}^1
(2\pi)^{kp_0+jp_1} |\ze(2M+5/2-kp_0-jp_1)a_{jk}  | \frac{x^{2M}}{4\pi^2-x^2}\\
+\frac{x^4}{4!}\sum_{n=1}^{N_2}n^6|b_n|+
\sum_{m=2}^{M-1}|a^2_m| x^{2m}\,.
\end{multline}
Moreover, the derivatives of the error term $E_{u_0}(x)$
are trivially bounded as
\begin{equation*}
  |E_{u_0}'(x)|\leq |E_{u'_0}(x)|\,,\quad
   |E_{u_0}''(x)|\leq |E_{u''_0}(x)|\,. 
\end{equation*}
\end{lemma}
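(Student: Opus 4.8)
The plan is to substitute the Clausen expansions of Lemma~\ref{boundsClausen0} into the formulas \eqref{u0}, \eqref{u0'}, \eqref{u0''} for $u_0$ and its derivatives, Taylor-expand the trigonometric contributions, regroup by powers of $|x|$, read off the displayed coefficients from the leading terms, and collect everything else into the error. A preliminary observation is that Lemma~\ref{boundsClausen0}, although stated for $z>0$, holds verbatim for any real index: its proof uses only the representations \eqref{ClausenC}--\eqref{ClausenS}, the functional equation \eqref{identz}, and the monotonicity of $\zeta$ and $\Gamma$ on $(2,\infty)$; and for every index $z$ occurring in $u_0$, $u_0'$ or $u_0''$ one has $2m+1-z>2$ for all $m\ge M$, since the largest such index is $\tfrac32+\max\{N_0p_0,N_1p_0+p_1\}\le M$. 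Hence the single integer $M$ of the statement may be used simultaneously in the expansion of all the finitely many Clausen functions that appear.

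First I would handle $u_0$ itself. For each pair $(j,k)$, Lemma~\ref{boundsClausen0} with $z=\tfrac32+kp_0+jp_1$ expresses $\zeta(z)-C_z(x)$ as $a^0_{jk}|x|^{\frac12+kp_0+jp_1}$ (the constant $\zeta(z)$ cancelling the $m=0$ term of \eqref{ClausenC}) minus the finite sum $\sum_{m=1}^{M-1}(-1)^m\zeta(z-2m)\tfrac{x^{2m}}{(2m)!}$ minus $E_{C_z}(x)$. Expanding $\cos(nx)-1$ with its quadratic and quartic Taylor terms plus a Lagrange remainder bounded by $\tfrac{n^6x^6}{6!}$, summing over all terms of \eqref{u0}, keeping the $x^2$ and $x^4$ contributions explicit and absorbing the remainder---the $x^{2m}$ terms with $3\le m\le M-1$, the two tails, and the trigonometric remainders---into $E_{u_0}(x)$, one obtains \eqref{asympu0}; the coefficients $a^0_1$, $a^0_2$ come from the $m=1,2$ terms of \eqref{ClausenC} combined with the $x^2$, $x^4$ coefficients of $\cos(nx)-1$, and the bound \eqref{erroru0} is just the triangle inequality applied to \eqref{errorClausen} (with $1+z-2M=\tfrac52-2M+kp_0+jp_1$ and $2M+1-z=2M-\tfrac12-kp_0-jp_1$) together with the remainder bounds and the polynomial tail $\sum_{m=3}^{M-1}|a^0_m|x^{2m}$.

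The expansions \eqref{asympu1} and \eqref{asympu2} would be obtained in exactly the same way, now using \eqref{ClausenS} for $u_0'$ (Clausen index $\tfrac12+kp_0+jp_1$) and \eqref{ClausenC} again for $u_0''$ (Clausen index $-\tfrac12+kp_0+jp_1$, which for $k=j=0$ is where the extension to non-positive index is used), and expanding $\sin(nx)$ and $\cos(nx)$ in place of $\cos(nx)-1$. For the last assertion I would note that the three explicit parts are compatible under differentiation: from the cascade $C_z'=-S_{z-1}$, $S_z'=C_{z-1}$ and the elementary identities $\sin(\tfrac{3\pi}{4}+\theta)=\cos(\tfrac\pi4+\theta)$, the analogous shift between $\cos(\tfrac\pi2(\tfrac12+\alpha))$ and $\sin(\tfrac\pi2(-\tfrac12+\alpha))$, and $\Gamma(s+1)=s\Gamma(s)$, termwise differentiation of the explicit part of \eqref{asympu0} reproduces that of \eqref{asympu1}, and likewise from \eqref{asympu1} to \eqref{asympu2}. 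Writing $P_1$ for the explicit part of the right-hand side of \eqref{asympu1}, it follows that $E_{u_0}'=u_0'-P_1=E_{u_0'}$ on each side of the origin, and similarly $E_{u_0}''=E_{u_0''}$, which gives the stated bounds (indeed with equality).

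Since the authors themselves label this computation routine, there is no deep obstacle; the one place where genuine care is required is the sign bookkeeping in the closed forms of $a^0_{jk}$, $a^1_{jk}$, $a^2_{jk}$---and the verification that these are mutually consistent under differentiation---which rests on the $\Gamma$-recursion and half-angle identities above, together with the check that the single $M$ controls all $\sim20$ terms uniformly across the three expansions so that the error estimates combine cleanly through the triangle inequality.
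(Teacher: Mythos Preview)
Your proposal is correct and is precisely the routine computation the paper has in mind; the paper itself omits the proof entirely, stating only that the bounds of Lemma~\ref{boundsClausen0} ``imply the following asymptotic expansions that we give without proof as they involve tedious but largely standard computations.'' Your two points of care---that the single integer $M$ of the statement dominates the individual cutoffs $\lceil (z+1)/2\rceil$ needed in Lemma~\ref{boundsClausen0} for every Clausen index occurring (including the negative index $-\tfrac12$ in $u_0''$), and that termwise differentiation of the explicit parts matches, yielding $E_{u_0}'=E_{u_0'}$ and $E_{u_0}''=E_{u_0''}$ identically---are exactly the checks one has to make, and you have them right.
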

Analogously, will also need estimates for $\L u_0$ with $\L$ the linear
operator introduced in~\eqref{L} (that we give without proof, as
before):
\begin{lemma}\label{L.estLu0}
The asymptotic expansion of $\L u_0$ close $x=0$ is
  \begin{multline}
    \label{lu0}
    \L u_0=\sum_{k=0}^{N_j}\sum_{j=0}^1  A^{0}_{jk} |x|^{1+kp_0+jp_1}\\
    +\Big(A^0_1-\frac12\sum_{n=1}^{N_2}b_n n^{3/2}
    \sqrt{\tanh(n)}
+\frac12\sum_{n=1}^{\infty} \sum_{k=0}^{N_j}\sum_{j=0}^1 a_{jk}
\frac{\sqrt{\tanh(n)}}{n^{kp_0+jp_1}}
    \Big)x^2\\
+\Big(A^0_2+\frac{1}{24} \sum_{n=1}^{N_2}b_n n^{7/2}
\sqrt{\tanh(n)}
-\frac{1}{24}\sum_{n=1}^{\infty} \sum_{k=0}^{N_j}\sum_{j=0}^1 a_{jk}
\frac{\sqrt{\tanh(n)}}{n^{kp_0+jp_1-2}}
\Big)x^4
    +E_{\L u_0}(x)\,,
  \end{multline}
  where
  \begin{align*}
  A^0_{jk}&=\Ga(-1-kp_0-jp_1) \sin\big(\tfrac \pi2
    (kp_0+jp_1)\big)a_{jk}\,.\\
A^0_m&=\frac{(-1)^{m+1}}{(2m)!}\sum_{k=0}^{N_j}\sum_{j=0}^1 a_{jk}
              \ze(2+kp_0+jp_1-2m)\,,
  \end{align*}
  and
  \begin{multline*}
    |E_{\L u_0}(x)|\leq
    2 (2\pi)^{3-2M}\sum_{k=0}^{N_j}\sum_{j=0}^1
(2\pi)^{kp_0+jp_1} |\ze(2M-1-kp_0-jp_1) a_{jk}  |
\frac{x^{2M}}{4\pi^2-x^2}\\
+\frac{x^6}{6!}\sum_{n=1}^{N_2}n^{3/2}\sqrt{\tanh(n)}|b_n|
+\sum_{m=3}^{M-1}|A^0_m| x^{2m}\,.
\end{multline*}
Moreover, the derivatives of the error term have the following bounds:
  \begin{multline*}
    |E_{\L u_0}'(x)|\leq
    2 (2\pi)^{2-2M}\sum_{k=0}^{N_j}\sum_{j=0}^1
    (2\pi)^{kp_0+jp_1} |\ze(2M+1+kp_0-jp_1)a_{jk}  |
\frac{|x|^{2M+1}}{4\pi^2-x^2}\\
+\frac{|x|^5}{5!}\sum_{n=1}^{N_2}n^{3/2}\sqrt{\tanh(n)}|b_n|
+\sum_{m=2}^{M-1}|A^1_m| |x|^{2m+1}\,,
\end{multline*}
  \begin{multline*}
    |E_{\L u_0}''(x)|\leq
2 (2\pi)^{1-2M}\sum_{k=0}^{N_j}\sum_{j=0}^1
(2\pi)^{kp_0+jp_1} |\ze(2M+3+kp_0-jp_1) a_{jk} |
\frac{x^{2M}}{4\pi^2-x^2}\\
+\frac{x^4}{4!}\sum_{n=1}^{N_2}n^{3/2}\sqrt{\tanh(n)}|b_n|
+\sum_{m=3}^{M-1}|A^2_m| x^{2m}\,,
\end{multline*}
with
\begin{align*}
A^1_m&=\frac{(-1)^{m}}{(2m+1)!}\sum_{k=0}^{N_j}\sum_{j=0}^1 a_{jk}
              \ze(kp_0+jp_1-2m)\,,\\
A^2_m&=\frac{(-1)^{m}}{(2m)!}\sum_{k=0}^{N_j}\sum_{j=0}^1 a_{jk}
              \ze(kp_0+jp_1-2m)\,.
\end{align*}
\end{lemma}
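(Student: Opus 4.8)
The plan is to reduce everything to the Clausen estimate of Lemma~\ref{boundsClausen0}, after which the computations parallel those behind Lemmas~\ref{boundsClausen0} and~\ref{L.estu0}. The first step is to make the action of~$\L$ on the building blocks of~$u_0$ completely explicit by means of the Fourier representation~\eqref{kernel2} of~$K$. Since $\int_{-\pi}^{\pi}K(z)\cos(nz)\,dz=m(n)$ for $n\ge1$ and $K$ is even, one computes directly that
\[
\L[\cos(n\cdot)-1](x)=m(n)\big(\cos(nx)-1\big),\qquad m(n)=\sqrt{\tanh(n)/n}\,,
\]
and, expanding $C_z$ in its Fourier series and interchanging the summation with the bounded operator~$\L$ — which is legitimate because $\sum_n n^{-z}$ converges absolutely for $z=\tfrac32+kp_0+jp_1>1$ and $m(n)\le1$ — that
\[
\L\big[\zeta(z)-C_z\big](x)=\sum_{n=1}^{\infty}\frac{\sqrt{\tanh(n)}}{n^{z+1/2}}\,\big(1-\cos(nx)\big)\,.
\]
Thus, writing $w:=z+\tfrac12=2+kp_0+jp_1$, the function $\L u_0$ is a linear combination of these ``damped Clausen'' series together with the already explicit trigonometric tail $\sum_{n=1}^{N_2}b_n m(n)\big(\cos(nx)-1\big)$.

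The technical core is the behaviour of each damped Clausen series near $x=0$. Here I would write $\sqrt{\tanh(n)}=1-\big(1-\sqrt{\tanh(n)}\big)$, with $0<1-\sqrt{\tanh(n)}<2e^{-2n}$, and split the series into $\big(\zeta(w)-C_w(x)\big)$ and the exponentially damped remainder $-\sum_n n^{-w}\big(1-\sqrt{\tanh(n)}\big)\big(1-\cos(nx)\big)$. To the first piece one applies Lemma~\ref{boundsClausen0} with index $w=2+kp_0+jp_1$: using $\sin\!\big(\tfrac\pi2(2+kp_0+jp_1)\big)=-\sin\!\big(\tfrac\pi2(kp_0+jp_1)\big)$, its singular term $-\Ga(1-w)\sin(\tfrac\pi2 w)|x|^{w-1}$ becomes precisely $A^0_{jk}|x|^{1+kp_0+jp_1}$ (the case $k=j=0$ causing no trouble, $A^0_{00}$ being understood as $\lim_{w\to2}\big(-\Ga(1-w)\sin(\tfrac\pi2 w)\big)a_{00}=\tfrac\pi2 a_{00}$, matched by the genuine $|x|$-term of $\zeta(2)-C_2(x)$); its $\zeta$-Taylor coefficients produce the constants $A^0_m$; and its remainder, bounded through~\eqref{errorClausen}, gives the first line of the bound on $E_{\L u_0}$. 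The second, real-analytic piece I would expand termwise via $1-\cos(nx)=\tfrac12 n^2x^2-\tfrac1{24}n^4x^4+\cdots$; this is the source of the residual Dirichlet series $\tfrac12\sum_n a_{jk}\sqrt{\tanh(n)}/n^{kp_0+jp_1}$ and $-\tfrac1{24}\sum_n a_{jk}\sqrt{\tanh(n)}/n^{kp_0+jp_1-2}$ appearing in the $x^2$ and $x^4$ coefficients of~\eqref{lu0} (to be read together with the $\zeta$-values absorbed into $A^0_1,A^0_2$), the tail being dominated by a geometric series in $e^{-2n}$. Adding the trigonometric part and estimating its Taylor remainder beyond $x^4$ by the Lagrange bound $|\cos t-1+\tfrac{t^2}2-\tfrac{t^4}{24}|\le \tfrac{t^6}{6!}$ yields the $b_n$-term in the error, and collecting everything proves~\eqref{lu0} and the bound on $E_{\L u_0}$.

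For the derivative bounds one uses that, by~\eqref{ClausenC}, each damped Clausen series equals $-\Ga(1-w)\sin(\tfrac\pi2 w)|x|^{w-1}$ plus a function that is even and real-analytic on $\{|x|<2\pi\}$, and so may be differentiated once or twice away from the origin directly from this representation: $\tfrac{d}{dx}$ turns it into a damped $S_{w-1}$ and $\tfrac{d^2}{dx^2}$ into a damped $C_{w-2}$, with $w-1=1+kp_0+jp_1$ and $w-2=kp_0+jp_1$ (the case $w-2=0$, which gives $C_0\equiv-\tfrac12$, being trivial). Applying the $S$- and $C$-parts of Lemma~\ref{boundsClausen0} to these indices reproduces the singular terms $\propto|x|^{kp_0+jp_1}$ and $\propto|x|^{-1+kp_0+jp_1}$, the polynomial coefficients $A^1_m,A^2_m$ (which are exactly the $\zeta$-Taylor coefficients of $S_{w-1}$ and $C_{w-2}$, cf.~\eqref{ClausenS} and~\eqref{ClausenC}) and remainders of the stated orders, while the exponentially damped correction and the trigonometric polynomial are differentiated directly at the cost of at most two extra powers of~$n$. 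This runs in exact parallel with the passage from $u_0$ to $u_0',u_0''$ in Lemma~\ref{L.estu0}.

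The step I expect to be most delicate is not any individual estimate but the bookkeeping. Several of the Dirichlet series that appear formally — for instance $\sum_n\sqrt{\tanh(n)}/n^{kp_0+jp_1}$ — diverge when $kp_0+jp_1$ is small, the extreme case $k=j=0$ being $\sum_n\sqrt{\tanh(n)}$. One must therefore always keep each such series paired with the $\zeta$-value it comes packaged with, so that only the convergent combination $\zeta(s)+\sum_n\big(\sqrt{\tanh(n)}-1\big)/n^{s}$ ever enters, and one must carry enough Taylor terms (the $M-1$ of them) that every leftover series remains absolutely convergent even after two differentiations. Once this is arranged, what is left are the tedious but entirely standard manipulations alluded to in the statement.
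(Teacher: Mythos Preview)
Your approach is correct and is precisely the one the paper has in mind: the authors explicitly omit the proof, saying it is obtained ``analogously'' to Lemma~\ref{L.estu0} via ``tedious but largely standard computations,'' and your outline---computing $\L$ on Fourier building blocks to reduce to Clausen functions of shifted index $w=2+kp_0+jp_1$, then invoking Lemma~\ref{boundsClausen0} exactly as in Lemma~\ref{L.estu0}---is that computation. Your remark about the bookkeeping (pairing each formally divergent $\sum_n\sqrt{\tanh(n)}/n^{kp_0+jp_1}$ with its companion $\zeta$-value so that only the convergent combination $\zeta(s)+\sum_n(\sqrt{\tanh(n)}-1)/n^s$ appears) is exactly the point that makes the stated $x^2$ and $x^4$ coefficients well defined, and is the only genuinely nonroutine aspect of the argument.
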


At this point we can now understand the 
construction of the approximate solution $u_0$
and how it helps us to address Conjecture~\ref{conj}.
Indeed, let $u(x)=u_0(x)+|x| v_0(x)$ be a solution of
the reduced Whitham equation \eqref{reducedW}
with $u_0(x)$ as before and $v_0(x)\in L^\infty(\T)$.
In terms of the perturbation $v_0$
the equation can be recast as
\begin{equation}
  \label{v0}
  (I-T_0) v_0=F_0-\frac{|x|}{2u_0} v_0^2\,,
\end{equation}
with $T_0:L^{\infty}(\T)\to L^{\infty}(\T)$ the operator
\begin{equation}
  \label{T0}
  T_0 v_0(x)=\frac{1}{2|x|u_0}\int_{0}^{\pi} \big(
  K(x-y)+K(x+y)-2K(y)\big) y\, v_0(y)\,dy\,,
\end{equation}
and where we have defined
\begin{equation}
  \label{F0}
  F_0=\frac{1}{2|x| u_0}(\L u_0-u_0^2)\,.
\end{equation}
Since we aim to show the existence of such $v_0$ in $L^\infty(\T)$,
the precise form of $u_0$ in~\eqref{u0} becomes apparent:
we choose the coefficients $a_{jk}$ so that the defect term $F_0$ is
bounded in $L^\infty(\T)$ and arbitrarily small close to $x=0$ while
the constants $b_n$ are chosen to control the norm globally.

For notational convenience and before we give a uniform bound for
$F_0$, let us introduce an auxiliary function
\begin{equation}\label{hatu0}
\hat u_0(x):=\frac{\la\sqrt{x}-u_0(x)}{u_0(x)}\,,\quad x\in [0,\pi]\,,
\end{equation}
which is small close to $x=0$ as the following lemma shows:
\begin{lemma}\label{L.hatu0}
  Let $\hat u_0$ be as before and take $\ep>0$ a small fixed
  number. Then, for $0\leq x\leq \ep$,
\[
\hat u_0(x)\leq  c_{\ep,\hat u_0} x^{p_0}\,.
\]
\end{lemma}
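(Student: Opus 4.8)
The plan is to use the asymptotic expansion of $u_0$ from Lemma~\ref{L.estu0} together with Proposition~\ref{L.lambda}. Recall that by~\eqref{lambda} the leading coefficient of $u_0$ at the origin is exactly $a^0_{00}=\lambda=\sqrt{\pi/8}$; this is forced by the defining equation~\eqref{exponents} for $p_0$ (the $k=0,\,j=0$ term in~\eqref{asympu0} is $a^0_{00}|x|^{1/2}$, and the constraint~\eqref{exponents} is precisely the condition that makes $a^0_{00}$ equal to $\lambda$). Writing $\nu_{jk}:=\tfrac12+kp_0+jp_1$ for the exponents appearing in~\eqref{asympu0}, the next-smallest exponent after $\nu_{00}=\tfrac12$ is $\nu_{10}=\tfrac12+p_0$ (since $0<p_0<1<2<p_1$, we have $p_0<\min\{1,p_1\}$, so no integer power $x^2$ or $x^4$ and no higher Clausen exponent interferes). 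Hence from~\eqref{asympu0},
\[
u_0(x)=\lambda\sqrt{x}+a^0_{10}\,x^{\frac12+p_0}+\sum_{\substack{(j,k)\neq(0,0),(1,0)}} a^0_{jk}\,x^{\nu_{jk}}+\Big(a^0_1-\tfrac12\textstyle\sum n^2b_n\Big)x^2+\cdots+E_{u_0}(x),
\]
where every remaining term is $O(x^{\frac12+q})$ for some $q\geq\min\{2p_0,p_1,\tfrac32\}>p_0$ on $[0,\ep]$, using also that $|E_{u_0}(x)|=O(x^{2M})$ with $2M\geq 3$.

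First I would divide by $u_0(x)$. On $(0,\ep]$ one has $u_0(x)=\lambda\sqrt{x}\,(1+o(1))$, so $u_0(x)\geq c\sqrt{x}$ for some $c>0$ depending on $\ep$; more precisely, factoring out $\lambda\sqrt{x}$,
\[
\hat u_0(x)=\frac{\lambda\sqrt{x}-u_0(x)}{u_0(x)}
=\frac{-a^0_{10}x^{\frac12+p_0}+O(x^{\frac12+q})}{\lambda\sqrt{x}\,\big(1+O(x^{p_0})\big)}
=-\frac{a^0_{10}}{\lambda}\,x^{p_0}+O(x^{q}),
\]
and since $q>p_0$ the second term is dominated by the first on a possibly smaller interval. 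Collecting constants, $|\hat u_0(x)|\leq c_{\ep,\hat u_0}\,x^{p_0}$ on $[0,\ep]$ for a suitable $c_{\ep,\hat u_0}>0$, which is the claimed bound. (The statement is phrased as an upper bound, so only the sign-insensitive estimate is needed; in practice $a^0_{10}<0$ so $\hat u_0\geq 0$, but that is immaterial here.)

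The only point requiring care — and the main obstacle — is the uniformity of the implied constants over $[0,\ep]$, i.e.\ controlling the tail $\sum_{(j,k)} a^0_{jk}x^{\nu_{jk}}$ and the explicit error bound~\eqref{erroru0} for $E_{u_0}$ by a single constant times $x^{p_0}$, and simultaneously bounding $u_0(x)^{-1}$ below by a constant times $x^{-1/2}$. Both are handled by the estimates already recorded in Lemma~\ref{L.estu0}: the finitely many Clausen exponents $\nu_{jk}$ with $(j,k)\neq(0,0)$ all exceed $\tfrac12+p_0$, the polynomial terms $x^2,x^4$ and the residual error $x^{2M}/(4\pi^2-x^2)$ are all $o(x^{\frac12+p_0})$ near $0$, and on the compact interval $[0,\ep]$ with $\ep$ fixed these bounds are uniform. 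Dividing the uniform numerator bound $\lesssim x^{\frac12+p_0}$ by the uniform lower bound $u_0(x)\gtrsim \sqrt{x}$ (valid once $\ep$ is small enough that $\hat u_0$ has absolute value, say, $\leq\tfrac12$) yields $\hat u_0(x)\leq c_{\ep,\hat u_0}x^{p_0}$, with $c_{\ep,\hat u_0}$ an explicit constant that can be enclosed by interval arithmetic from the coefficients $a_{jk},b_n$ and the value $p_0=0.611\ldots$ of Lemma~\ref{L.p0p1}.
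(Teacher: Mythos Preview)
Your approach is essentially the paper's: expand $u_0$ via Lemma~\ref{L.estu0}, use that the leading coefficient equals $\lambda$ so that $\lambda\sqrt{x}-u_0(x)=O(x^{1/2+p_0})$, and divide by $u_0(x)\sim\lambda\sqrt{x}$; the paper just makes the constant $c_{\ep,\hat u_0}$ fully explicit by exploiting monotonicity of each term and evaluating the quotient at $x=\ep$.

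Two small corrections are in order. First, the reason $a^0_{00}=\lambda$ is \emph{not} the defining equation~\eqref{exponents} for $p_0$; it is simply the choice $a_{00}=\tfrac14$ (see the Remark after Lemma~\ref{L.F0}), which gives $a^0_{00}=-\Gamma(-\tfrac12)\sin(\tfrac{3\pi}{4})\cdot\tfrac14=\sqrt{\pi/8}$. Equation~\eqref{exponents} instead fixes $p_0,p_1$ so that certain higher-order coefficients in $\L u_0-u_0^2$ vanish. Second, your indexing is swapped: in the paper's convention the exponent is $\tfrac12+kp_0+jp_1$, so the coefficient attached to $x^{1/2+p_0}$ is $a^0_{01}$ (i.e.\ $j=0$, $k=1$), not $a^0_{10}$. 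Neither slip affects the validity of the argument.
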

\begin{proof}
  By the definition of $\hat u_0(x)$,
  \begin{multline*}
    \hat u_0(x)\leq \Big(
      \sum_{j+k>0} |a^{0}_{jk}| |x|^{(k-1)p_0+jp_1}
  +\Big|a^0_1-\frac12\sum_{n=1}^{N_2}n^2b_n\Big||x|^{\frac32-p_0}
  +\Big|a^0_2+\frac{1}{24} \sum_{n=1}^{N_2}n^4b_n\Big| |x|^{\frac72-p_0}\\
  +|x|^{-1/2}|E_{u_0}(x)|\Big)
\Big(
  \la- \sum_{j+k>0} |a^{0}_{jk}||x|^{kp_0+jp_1}
  -\Big|a^0_1-\frac12\sum_{n=1}^{N_2}n^2b_n\Big||x|^{3/2}\\
  -\Big|a^0_2+\frac{1}{24} \sum_{n=1}^{N_2}n^4b_n\Big| |x|^{7/2}
  -|x|^{-1/2}|E_{u_0}(x)|\Big)^{-1}|x|^{p_0}\,.
\end{multline*}
Using the monotonicity of all the terms in the above expression,
by evaluating the fraction at $|x|=\ep$ we obtain the constant
$c_{\ep,\hat u_0}$:
\begin{multline*}
c_{\ep,\hat u_0}:=\Big(
      \sum_{j+k>0} |a^{0}_{jk}| \ep^{(k-1)p_0+jp_1}
  +\Big|a^0_1-\frac12\sum_{n=1}^{N_2}n^2b_n\Big|\ep^{\frac32-p_0}
  +\Big|a^0_2+\frac{1}{24} \sum_{n=1}^{N_2}n^4b_n\Big| \ep^{\frac72-p_0}+\ep^{-1/2} E_{\ep,u_0}|\Big)\\
\Big(
  \la- \sum_{j+k>0} |a^{0}_{jk}|\ep^{kp_0+jp_1}
  -\Big|a^0_1-\frac12\sum_{n=1}^{N_2}n^2b_n\Big|\ep^{3/2}\\
  -\Big|a^0_2+\frac{1}{24} \sum_{n=1}^{N_2}n^4b_n\Big| \ep^{7/2}
  -|x|^{-1/2} E_{\ep,u_0}|\Big)^{-1}\,,
\end{multline*}
where
\begin{multline*}\label{Eepu0}
E_{\ep,u_0}=2 (2\pi)^{5/2-2M}\sum_{k=0}^{N_j}\sum_{j=0}^1
(2\pi)^{kp_0+jp_1} |\ze(2M-1/2-kp_0-jp_1)a_{jk}  |
\frac{\ep^{2M}}{4\pi^2-\ep^2}\\
+\frac{\ep^6}{6!}\sum_{n=1}^{N_2}n^6|b_n|+
\sum_{m=3}^{M-1}|a^0_m| \ep^{2m}
\end{multline*}
denotes the RHS of~\eqref{erroru0} at $x=\ep$.
\end{proof}

\begin{lemma}\label{L.F0}
  Let $u_0$ be as in~\eqref{u0} with $a_{00}=\frac{1}{4}$.
  Then $F_0\in L^\infty(\T)$
  and
  \begin{equation}
    \label{de0}
    \de_0:=\|F_0\|_{L^\infty(\T)}\leq 9.1\cdot 10^{-8}\,.
  \end{equation}
  
\end{lemma}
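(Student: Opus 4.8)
The plan is to exploit the structure $F_0=(\L u_0-u_0^2)/(2|x|u_0)$ together with the explicit expansions of Lemmas~\ref{L.estu0} and~\ref{L.estLu0}. Since $u_0$ is even, so are $u_0^2$, $\L u_0$ and hence $F_0$, and it suffices to bound $F_0$ on $[0,\pi]$; on this interval $F_0$ will turn out to extend to a continuous function (the factor $2|x|u_0$ degenerates only at $x=0$, where the numerator vanishes to high order, and $u_0>0$ on $(0,\pi]$), and by evenness this continuous function glues to a $2\pi$-periodic one, so once a finite bound is in hand the membership $F_0\in L^\infty(\T)$ is automatic. We split $[0,\pi]=[0,\e]\cup[\e,\pi]$ for a suitably small fixed $\e>0$: on $[0,\e]$ one must use the near-cancellation in the numerator, while on $[\e,\pi]$, where $u_0$ is bounded away from $0$, one estimates $F_0$ by direct rigorous computation.

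\emph{The region $[0,\e]$.} Substituting the expansions of $u_0$ (Lemma~\ref{L.estu0}) and of $\L u_0$ (Lemma~\ref{L.estLu0}) into $\L u_0-u_0^2$ and expanding the square, the numerator becomes a finite linear combination of monomials $|x|^{\ga}$ plus the controlled remainders $E_{u_0},E_{\L u_0}$ and the products they generate. The term of order $|x|$ cancels identically because $a_{00}=\tfrac14$ forces the leading coefficient of $u_0$ to equal $\la=\sqrt{\pi/8}$ — this is exactly Proposition~\ref{L.lambda} — and the terms of orders $|x|^{1+p_0}$ and $|x|^{1+p_1}$ cancel by the defining relations~\eqref{exponents} for $p_0$ and $p_1$; the remaining coefficients $a_{0k}\,(k\ge1)$, $a_{1k}$ and $b_n$ are chosen so that the next several orders also cancel or become negligibly small, leaving in the numerator a quantity of size $O(|x|^{\ga_0})$ for some $\ga_0>3/2$ with an explicit small constant. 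Dividing by $2|x|u_0$ and using Lemma~\ref{L.hatu0} in the form $u_0(x)^{-1}=(1+\hat u_0(x))/(\la\sqrt{x})\le(1+c_{\e,\hat u_0}\e^{p_0})/(\la\sqrt{x})$ on $[0,\e]$, we obtain $|F_0(x)|\le\Psi(x)$ with $\Psi$ an explicit nondecreasing function; evaluating $\Psi(\e)$ with interval arithmetic gives the bound on $[0,\e]$.

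\emph{The region $[\e,\pi]$.} Here $u_0$ has a certified positive lower bound, so $F_0$ is a quotient of two quantities that can be enclosed to arbitrary precision. For the numerator we use the identities $\L\big(\cos(n\,\cdot)-1\big)(x)=m(n)\big(\cos(nx)-1\big)$ and $\L\big(\ze(z)-C_z\big)(x)=\sum_{\ell=1}^\infty m(\ell)\,\ell^{-z}\big(1-\cos(\ell x)\big)$, both immediate from~\eqref{kernel2}, which turn $\L u_0$ into rapidly convergent series with explicitly bounded tails; $u_0$ and $u_0^2$ are enclosed analogously from~\eqref{Clausen} (or~\eqref{polylog}). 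Partitioning $[\e,\pi]$ into small subintervals and evaluating $F_0$ on each by interval arithmetic — with a certified lower bound for $u_0$ on every subinterval — produces the global bound, which for the chosen coefficients (the $b_n$ being selected precisely to control $F_0$ on the bulk of $\T$) comes out below $9.1\cdot10^{-8}$. Combining the two regions finishes the proof.

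The genuine difficulty is quantitative rather than conceptual: the target $\de_0\le 9.1\cdot10^{-8}$ is so small that the twenty coefficients of $u_0$ must be produced by a delicate numerically guided optimisation and then certified rigorously — a crude choice, or a crude enclosure, overshoots by orders of magnitude. Two spots demand particular care. Near $x=\e$, where $|x|u_0$ is still small, both the asymptotic estimate on $[0,\e]$ and the direct estimate on $[\e,\pi]$ must stay sharp, which forces a fine subdivision and high working precision. And in the region $[0,\e]$ one has to organise the expansion of $u_0^2$ — the square of a twenty-term series — carefully enough to identify exactly which monomials are annihilated (the order-$|x|$ term, the orders $|x|^{1+p_0}$, $|x|^{1+p_1}$, and the further ones killed by the chosen coefficients) and to bound the combinatorially many survivors together with all the error contributions. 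These computations are performed with interval arithmetic and the details are recorded in Appendix~\ref{S.AppendixB}.
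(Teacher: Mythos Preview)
Your proposal is correct and follows essentially the same route as the paper: split $[0,\pi]$ into $[0,\e]\cup[\e,\pi]$, use the expansions of Lemmas~\ref{L.estu0}--\ref{L.estLu0} together with $u_0^{-1}=(1+\hat u_0)/(\la\sqrt{x})$ near the origin, and hand the complementary region to an interval-arithmetic computation (Appendix~\ref{S.AppendixB}). Your extra remark that the coefficients of $|x|^{1+p_0}$ and $|x|^{1+p_1}$ vanish automatically by the defining relation~\eqref{exponents} is a useful clarification the paper leaves implicit in the proof of Lemma~\ref{L.F0} (it surfaces later in Lemma~\ref{L.F1F2}), but it does not change the architecture of the argument.
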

\begin{proof}
  A long but straightforward computation shows that
  \begin{multline}\label{Lu0-u02}
  \L u_0(x)-u_0^2(x)=\big(A^0_{00}-(a^0_{00})^2\big)|x|
  +(A^0_{01}-2a^0_{00}a^0_{01})|x|^{1+p_0}
  +(A^0_{10}-2a^0_{00}a^0_{10})|x|^{1+p_1}\\
+( A_{11}^0-a^0_{00}a^0_{11}-a^0_{01}a^0_{10})|x|^{1+p_0+p_1}
  \\
  +\sum_{k=2}^{N_0}\Big( A_{0k}^0-\frac12 ((-1)^k+1)
  (a_{0\floor{\frac k 2}}^0)^2
  -2\sum_{j=0}^{\floor{\frac{k-1}{2}}}a_{0j}^0a_{0 (k-j)}^0\Big)
  |x|^{1+k p_0}\\
  +\Big[
  A^0_{1}-\frac{1}{2}\Big( \sum_{n=1}^{N_2}b_n n^{3/2}
\sqrt{\tanh(n)}-
\sum_{n=1}^{\infty} \sum_{k=0}^{N_j}\sum_{j=0}^1 a_{jk}
\frac{\sqrt{\tanh(n)}}{n^{kp_0+jp_1}}\Big)\Big]x^2\\
+\Big[
  A^0_{2}+\frac{1}{24}\Big( \sum_{n=1}^{N_2}b_n n^{7/2}
\sqrt{\tanh(n)}-
\sum_{n=1}^{\infty} \sum_{k=0}^{N_j}\sum_{j=0}^1 a_{jk}
\frac{\sqrt{\tanh(n)}}{n^{kp_0+jp_1-2}}\Big)\\
-\Big(a^0_1-\frac12\sum_{n=1}^{N_2}n^2b_n\Big)^2
\Big]x^4-\Big(a^0_2+\frac{1}{24}
\sum_{n=1}^{N_2}n^4b_n\Big)^2x^8\\
-2\Big(a^0_1-\frac12\sum_{n=1}^{N_2}n^2b_n\Big) \sum_{k=0}^{N_j}\sum_{j=0}^1  a^{0}_{jk}
|x|^{\frac 52+kp_0+jp_1}
-2\Big(a^0_1-\frac{1}{24}\sum_{n=1}^{N_2}n^4b_n\Big) \sum_{k=0}^{N_j}\sum_{j=0}^1  a^{0}_{jk}
|x|^{\frac 92+kp_0+jp_1}\\
-2E_{u_0}(x) \sum_{k=0}^{N_j}\sum_{j=0}^1  a^{0}_{jk}
  |x|^{\frac 12+kp_0+jp_1}+E_{\L_{u_0}}(x)-(E_{u_0}(x))^2\,.
\end{multline}

Using now Lemma~\ref{L.hatu0} to write
\begin{equation*}
  \frac{1}{u_0(x)}=\frac{1+\hat u_0(x)}{\la \sqrt{x}}\,,
\end{equation*}
by~\eqref{F0} the coefficient $A^0_{00}-(a^0_{00})^2$ must then
vanish identically to ensure that $F_0\in L^\infty(\T)$, which holds
to be true when $a_{00}$ takes the value $\frac14$ by Lemma~\ref{L.estu0}.
The rest of the proof is computer assisted. See Appendix~\ref{S.AppendixB}.
\end{proof}
\begin{remark}
  Notice that $a_{00}=\frac14$ is equivalent to fix
  $a^{0}_{00}=\la$ in~\eqref{asympu0}, with $\la$ the constant
  of~\eqref{lambda}.
  Since we express a solution of~\eqref{reducedW} as
  $u(x)=u_0(x)+|x|v_0(x)$ for some $v_0\in L^\infty(\T)$,
  this condition is naturally expected
  by Proposition~\ref{L.lambda}
\end{remark}


%

\begin{lemma}\label{L.upperbounds}
  Let $u_0$ be the approximate solution~\eqref{u0} and take $\ep=0.1$.
  Then, 
  the following inequalities hold for $0\leq x\leq \ep$:
  \begin{align}
   \frac{ 1}{\la\sqrt{x}}(\la\sqrt{x}-u_0(x))&\leq
    \frac{1}{\la}c_{\ep,p_0} x^{p_0}\,,\label{L.upperbound0}\\
    \frac{1}{2x}-\frac{u_0'(x)}{u_0(x)}
    &\leq \frac{1}{\la} c_{\ep,p_0}' x^{p_0-1}\,, \label{L.upperbound1}\\
      \frac{3}{4 x^2}
     -\frac{1}{(u_0(x))^2}\big(2(u'_0(x))^2-u_0(x)u_0''(x)\big)
&\leq  \frac{c_{\ep,p_0}'' }{\la} x^{p_0-2} \,, \label{L.upperbound2}
   \end{align}
   where the constants $c_{\ep, p_0}, c_{\ep,p_0}', c_{\ep,p_0}''$ verify
   \begin{equation}
     c_{\ep,p_0}<0.142\,,\quad c_{\ep,p_0}'<0.16\,,\quad c_{\ep, p_0}''<0.178\,.
   \end{equation}
\end{lemma}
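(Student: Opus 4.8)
The plan is to rewrite each of the three left-hand sides as a single fraction governed by the asymptotic expansions of Lemma~\ref{L.estu0}, to pull the stated power of $x$ out of it, and then to observe that what remains is nondecreasing on $[0,\ep]$ and hence attains its maximum at $x=\ep=0.1$; the resulting endpoint values, which are precisely $c_{\ep,p_0}$, $c_{\ep,p_0}'$ and $c_{\ep,p_0}''$, are then enclosed with interval arithmetic (Appendix~\ref{S.AppendixB}). For~\eqref{L.upperbound0} I would write $\frac{1}{\la\sqrt x}(\la\sqrt x-u_0(x))=1-\frac{u_0(x)}{\la\sqrt x}$ and insert the expansion of $u_0$ from Lemma~\ref{L.estu0}. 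Since $a_{00}=\tfrac14$ forces $a^0_{00}=\la$ (Lemma~\ref{L.F0}; cf.~\eqref{lambda}), the leading term $\la\sqrt x$ cancels, and after dividing by $\la\sqrt x$ and pulling out $x^{p_0}$ one is left with $-\frac{1}{\la}x^{p_0}$ times a bracket consisting of the powers $a^0_{jk}x^{kp_0+jp_1-p_0}$ over $j+k>0$, two terms $x^{3/2-p_0}$ and $x^{7/2-p_0}$ carrying the quadratic and quartic coefficients of the expansion, and a term $x^{-1/2-p_0}E_{u_0}(x)$. Because $p_0<1$ (Lemma~\ref{L.p0p1}) and the smallest value of $kp_0+jp_1$ over $j+k>0$ equals $p_0$, every exponent here is nonnegative, and by~\eqref{erroru0} the last term is dominated by powers with positive exponents ($M$ being large); thus the triangle-inequality bound of the bracket is nondecreasing on $[0,\ep]$, its value at $x=\ep$ is $c_{\ep,p_0}$, and an interval computation gives $c_{\ep,p_0}<0.142$.

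For~\eqref{L.upperbound1} and~\eqref{L.upperbound2} I would proceed the same way after clearing denominators, writing $\frac{1}{2x}-\frac{u_0'}{u_0}=\frac{u_0-2xu_0'}{2xu_0}$ and $\frac{3}{4x^2}-\frac{2(u_0')^2-u_0u_0''}{u_0^2}=\frac{\frac{3}{4x^2}u_0^2-2(u_0')^2+u_0u_0''}{u_0^2}$. The point that makes this work is that~\eqref{asympu1} and~\eqref{asympu2} are the termwise derivatives of the expansion of $u_0$, so that the $\la\sqrt x$ part of $u_0$ contributes no $\sqrt x$ term to $u_0-2xu_0'$ and no $x^{-1}$ term to $\frac{3}{4x^2}u_0^2-2(u_0')^2+u_0u_0''$; the latter cancellation is just the identity that this expression vanishes when $u_0$ is replaced by $\la\sqrt x$, which is exactly why the constant $\tfrac34$ appears in~\eqref{L.upperbound2}. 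After these cancellations the numerators consist of powers $x^{1/2+kp_0+jp_1}$ (with $j+k>0$), respectively $x^{-1+kp_0+jp_1}$ and $x^{-1+(k+k')p_0+(j+j')p_1}$, together with polynomial cross terms and the error contributions bounded via~\eqref{erroru0}--\eqref{erroru0'} (recall from Lemma~\ref{L.estu0} that $|E_{u_0}'|\le|E_{u'_0}|$ and $|E_{u_0}''|\le|E_{u''_0}|$). Dividing by $2xu_0=2\la x^{3/2}(1+O(x^{p_0}))$, respectively $u_0^2=\la^2x(1+O(x^{p_0}))$, one may then factor out $x^{p_0-1}$, respectively $x^{p_0-2}$.

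It then remains to bound the leftover brackets uniformly on $[0,\ep]$. Their numerators are bounded termwise by absolute values as in~\eqref{L.upperbound0}; for the reciprocals $\frac{1}{u_0}$ and $\frac{1}{u_0^2}$ one uses that, by Lemma~\ref{L.hatu0} and~\eqref{hatu0}, $u_0(x)\ge\la\sqrt x\,(1-c_{\ep,\hat u_0}x^{p_0})>0$ on $[0,\ep]$ with $\ep=0.1$, so that both reciprocals are finite and nondecreasing there. Since $p_0<1$, all surviving exponents are nonnegative, the brackets are nondecreasing on $[0,\ep]$, and their values at $x=\ep$ define $c_{\ep,p_0}'$ and $c_{\ep,p_0}''$; the interval computation then yields $c_{\ep,p_0}'<0.16$ and $c_{\ep,p_0}''<0.178$.

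I expect the main obstacle to be the bookkeeping behind~\eqref{L.upperbound2}: one must expand the products $u_0^2$, $(u_0')^2$ and $u_0u_0''$, each a double sum over the Clausen indices together with cross terms involving the polynomial and error parts, and check not only that the $x^{-1}$ term of the numerator cancels but that after this cancellation no term carries an exponent below $-1+p_0$, so that the extracted factor $x^{p_0-2}$ is genuinely the slowest-decaying part and the remainder is monotone on $[0,\ep]$. This long but mechanical verification, together with the final enclosure of the three constants, is the part carried out with computer assistance.
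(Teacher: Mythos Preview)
Your approach is correct and matches the paper's strategy: expand via Lemma~\ref{L.estu0}, use $a^0_{00}=\la$ to cancel the leading term, factor out the stated power of~$x$, and bound the remaining bracket by monotonicity so that its supremum is attained at $x=\ep$; the endpoint values are then enclosed rigorously.

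The only notable difference is in the algebraic rewriting of~\eqref{L.upperbound1}. You clear denominators as $\frac{u_0-2xu_0'}{2xu_0}$ and then bound $1/u_0$ separately via the lower bound $u_0\geq\la\sqrt{x}\,(1-c_{\ep,\hat u_0}x^{p_0})$. The paper instead uses the identity
\[
\frac{1}{2x}-\frac{u_0'}{u_0}=\frac{(\la\sqrt{x}-u_0)'-\hat u_0\,u_0'}{\la\sqrt{x}}\,,
\]
with $\hat u_0$ from~\eqref{hatu0}, which places exactly $\la\sqrt{x}$ in the denominator and thereby avoids a separate estimate for $1/u_0$. Both routes are valid and lead to constants of the same form; the paper's version is a little cleaner because the denominator is explicit. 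One minor slip in your write-up: $1/u_0$ and $1/u_0^2$ are \emph{decreasing} on $(0,\ep]$ (since $u_0$ is increasing there), not nondecreasing. What you actually need, and what your lower bound on $u_0$ gives, is that after factoring out $x^{p_0-1}$ (resp.\ $x^{p_0-2}$) the resulting upper bound for the bracket is a finite, nondecreasing expression in~$x$; the monotonicity claim should be made about that composite expression, not about the bare reciprocals.
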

\begin{proof}
We only show the first two bounds as the third is obtained in the same way.
To start, observe that by~\eqref{hatu0},
\[
\frac{1}{2x}-\frac{u_0'(x)}{u_0(x)}
= \frac{(\la \sqrt{x}-u_0(x))'-\hat u_0(x)u_0'(x)}{\la\sqrt{x}}\,.
\]
By the monotonicity of all the
quantities involved, we also have that
\begin{align}
  &\la\sqrt{x}-u_0(x)\leq \Big(  \sum_{j+k>0} |a^{0}_{jk}|
  \ep^{(k-1)p_0+jp_1}
  +\Big|a^0_1-\frac12\sum_{n=1}^{N_2}n^2b_n\Big|\ep^{\frac32-p_0}\\
  &\qquad\qquad\qquad+\Big|a^0_2+\frac{1}{24} \sum_{n=1}^{N_2}n^4b_n\Big|\ep^{\frac72-p_0}
  +\ep^{-\frac12-p_0} E_{\ep, u_0}\Big)x^{\frac12+p_0}
  \leq c_{\ep,p_0}x^{\frac12+p_0}\,,\nonumber\\
&  (\la \sqrt{x}-u_0(x))'-\hat u_0(x)u_0'(x)\leq
\Big(  \sum_{j+k>0} |a^{1}_{jk}|
\ep^{(k-1)p_0+jp_1}
+\Big|a^1_0-\frac12\sum_{n=1}^{N_2}n^2b_n\Big|\ep^{\frac32-p_0}\\
  &\qquad\qquad\qquad+\Big|a^1_1+\frac{1}{24} \sum_{n=1}^{N_2}n^4b_n\Big|\ep^{\frac72-p_0}
+\ep^{\frac12-p_0}E_{\ep,u_0'}\Big)x^{p_0-\frac12}
  \leq c_{\ep,p_0}' x^{p_0-\frac12}\,,\nonumber
\end{align}
where $E_{\ep,u_0}$ (resp. $E_{\ep,u_0'}$) denotes the RHS of~\eqref{erroru0}
(resp.~\eqref{erroru0'}) evaluated at $x=\ep$ and
where the numbers $c_{\ep,p_0}, c_{\ep,p_0}'$ are obtained letting
$\ep=0.1$ in the above bounds.
\end{proof}

For the arguments of the subsequent sections,
we need to show that not only $F_0$, but also its first and second order
(weighted) derivatives
\begin{equation}\label{F1F2}
  F_1(x)=F_0'(x)\,,\quad F_2(x)=|x|F_0''(x)\,,
\end{equation}
are bounded and small near $x=0$.
This is the content of the following lemma, whose proof is
omitted as it follows the same scheme of Lemma~\ref{L.F0},
that is, it relies on the asymptotic analysis of $\L u_0-u_0^2$ given
in~\eqref{Lu0-u02} and the estimates of the Lemmas~\ref{L.estu0},
\ref{L.estLu0} and \ref{L.upperbounds}. See Appendix~\ref{S.AppendixB}
for more details.

\begin{lemma}\label{L.F1F2}
  Let $u_0$ be as in~\eqref{u0}, in which
  the coefficients $a_{jk}$ and $b_{n}$ satisfy the relations
  \begin{align*}
    &a_{00}-\frac{1}{4}=0\,,\\
   & A^0_{01}-2a^0_{00}a^0_{01}=0\,,\\
 &   A^0_{1}-\frac{1}{2}\Big( \sum_{n=1}^{N_2}b_n n^{3/2}
\sqrt{\tanh(n)}-
\sum_{n=1}^{\infty} \sum_{k=0}^{N_j}\sum_{j=0}^1 a_{jk}
\frac{\sqrt{\tanh(n)}}{n^{kp_0+jp_1}}\Big)=0\,,\\
&A^0_{02}-(a^0_{02})^2-2a^0_{00}a^0_{02}=0\,,\\
&a^0_1-\frac12\sum_{n=1}^{N_2}n^2b_n=0\,.
  \end{align*}
  Then $F_1,F_2\in L^\infty(\T)$ and
\begin{equation}\label{de1,de2}
  \de_1:=\|F_1\|_{L^\infty(\T)}\leq 9.2\cdot 10^{-7}\,,\quad 
  \de_2:=\|F_2\|_{L^\infty(\T)}\leq 1.2\cdot 10^{-5}.
  \end{equation}
\end{lemma}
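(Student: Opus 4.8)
\textbf{Proof plan for Lemma~\ref{L.F1F2}.}
The strategy mirrors exactly the proof of Lemma~\ref{L.F0}. Recall that by~\eqref{F0} and~\eqref{F1F2}, controlling $F_1$ and $F_2$ amounts to bounding the first and second derivatives of the function $(2|x|u_0)^{-1}(\L u_0-u_0^2)$. The plan is to differentiate the explicit expansion~\eqref{Lu0-u02} for $\L u_0-u_0^2$ term by term, multiply by the reciprocal weight $(2|x|u_0)^{-1}$ (or $(2u_0)^{-1}$ for $F_2$ after pulling out the extra $|x|$), and then estimate everything uniformly on $[0,\pi]$. The role of the five algebraic constraints on $a_{jk}$, $b_n$ in the statement is precisely to force the cancellation of the non-$L^\infty$ contributions that would otherwise appear once one differentiates. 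Concretely: $a_{00}=\tfrac14$ (equivalently $a^0_{00}=\la$) kills the $|x|$-coefficient $A^0_{00}-(a^0_{00})^2$ of $\L u_0-u_0^2$ as in Lemma~\ref{L.F0}; the condition $A^0_{01}-2a^0_{00}a^0_{01}=0$ removes the $|x|^{1+p_0}$ term, which upon two differentiations and division by $|x|$ would produce an $|x|^{p_0-2}$ singularity (not in $L^\infty$ since $p_0<1$, hence $p_0-2<-1$ but more to the point $p_0-2<0$); likewise $A^0_{02}-(a^0_{02})^2-2a^0_{00}a^0_{02}=0$ eliminates the $|x|^{1+2p_0}$ term for the same reason; the $x^2$-coefficient constraint $A^0_1-\tfrac12(\cdots)=0$ and the constraint $a^0_1-\tfrac12\sum n^2 b_n=0$ remove, respectively, terms that would otherwise survive differentiation with the wrong power of $|x|$ or spoil the matching of the quadratic part. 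After imposing these, every remaining term in $\partial_x(\L u_0-u_0^2)$ and $|x|\partial_x^2(\L u_0-u_0^2)$, once divided by $2|x|u_0$, behaves like a nonnegative power of $|x|$ times a bounded function, so $F_1,F_2\in L^\infty(\T)$.

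For the quantitative bounds~\eqref{de1,de2} one proceeds in two regimes exactly as for $\de_0$. Near $x=0$, say on $[0,\ep]$ with $\ep=0.1$, one writes $(u_0)^{-1}=(1+\hat u_0)/(\la\sqrt x)$ using~\eqref{hatu0} and Lemma~\ref{L.hatu0}, and uses Lemma~\ref{L.upperbounds} to control the weighted combinations $\tfrac{1}{2x}-\tfrac{u_0'}{u_0}$ and $\tfrac{3}{4x^2}-\tfrac{1}{u_0^2}(2(u_0')^2-u_0 u_0'')$ that arise when the weight $(2|x|u_0)^{-1}$ is differentiated once or twice. Combined with the asymptotic expansions and error bounds of Lemmas~\ref{L.estu0} and~\ref{L.estLu0} (and the trivial bounds $|E_{u_0}'|\le |E_{u_0'}|$, $|E_{u_0}''|\le|E_{u_0''}|$, and the analogous ones for $E_{\L u_0}$), each surviving term contributes a small explicit constant times a positive power of $\ep$; monotonicity in $x$ lets one evaluate all the bounds at the endpoint $x=\ep$. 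Away from the origin, on $[\ep,\pi]$, the function $F_0$ and its derivatives are smooth and given by absolutely convergent series (the Clausen/trigonometric representations of $u_0, u_0', u_0''$ and of $\L u_0$ and its derivatives from~\eqref{kernel2}), so one bounds them directly by interval arithmetic on a fine grid together with an a priori modulus-of-continuity estimate, exactly as is done for $\de_0$ in Appendix~\ref{S.AppendixB}. Taking the maximum of the two regimes yields $\de_1\le 9.2\cdot10^{-7}$ and $\de_2\le 1.2\cdot10^{-5}$.

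The main obstacle is the bookkeeping: $u_0$ is a combination of $20$ terms, and after two differentiations and multiplication by the reciprocal weight the expression for $F_2$ has a very large number of summands, each with its own power of $|x|$ and its own zeta/Gamma coefficient. Verifying that the five stated algebraic constraints really do annihilate \emph{all} the problematic low-order terms — and nothing is left with a negative exponent — requires a careful, fully explicit expansion of $\partial_x(\L u_0-u_0^2)$ and $|x|\partial_x^2(\L u_0-u_0^2)$ analogous to~\eqref{Lu0-u02}, which is why we only sketch it here and defer the actual term-by-term verification and the interval-arithmetic evaluation to Appendix~\ref{S.AppendixB}. The analytic content beyond that is routine given Lemmas~\ref{L.estu0}, \ref{L.estLu0}, \ref{L.hatu0} and~\ref{L.upperbounds}.
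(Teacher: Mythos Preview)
Your proposal is correct and follows essentially the same approach as the paper, which in fact omits the proof entirely and merely remarks that it ``follows the same scheme of Lemma~\ref{L.F0}, that is, it relies on the asymptotic analysis of $\L u_0-u_0^2$ given in~\eqref{Lu0-u02} and the estimates of the Lemmas~\ref{L.estu0}, \ref{L.estLu0} and~\ref{L.upperbounds}'' with the numerical verification deferred to Appendix~\ref{S.AppendixB}. Your sketch actually gives more detail than the paper, in particular the term-by-term identification of which algebraic constraint kills which singular power; one small discrepancy is that the paper's computer-assisted part uses the splitting point $\ep=10^{-2}$ for the $\de_i$ bounds rather than $\ep=0.1$ (the latter is used only in Lemma~\ref{L.upperbounds}), but this does not affect the argument.
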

\section{Analysis of the linearized equation}\label{S.lineareq}
As we discussed in the introduction, one of the key elements in this
work is that we are able to exploit the (highly nontrivial)
invertibility of the linear operator that renders the reduced Whitham equation.
What is more,
the linearized equations for the derivatives of the solution are also
given by operators that we can invert and that we will study in the following section.

Indeed, it is clear that equation~\eqref{v0} suggests to invert the linear
operator $I-T_0$ to show the existence of a function $v_0\in
L^\infty(\T)$ that allows us to express a solution
of the reduced Whitham equation~\eqref{reducedW} as
\begin{equation}\label{perturbationv0}
u(x)=u_0(x)+|x|v_0(x)\,,
\end{equation}
with $u_0(x)$ our approximate solution~\eqref{u0}.
Although this ansatz by itself is not sufficient to prove the first
part of the Conjecture~\ref{conj} on the asymptotic behavior of
Whitham waves,
as we shall see in the next section one can
argue that the continuity of all the estimates
on a small parameter $\eta>0$ associated to the weight $|x|^{1+\eta}$
is sufficient to obtain the conclusion.

To begin with this analysis, in the following lemma we show that the
norm of the operator $T_0$ is smaller than~1. For notational simplicity, here and in
what follows we will denote by~$\|T\|$ the $L^\infty(\TT)\to
L^\infty(\TT)$ norm of a linear operator~$T$.

\begin{lemma}\label{L.normT0}
Let $C_B$ be the constant given by
\begin{equation}\label{cb}
C_B:=\int_0^\infty\Big|
\frac{1}{\sqrt{1-t}}+\frac{1}{\sqrt{1+t}}-2\Big|t^{-5/2}\,dt
=0.997362\ldots
\end{equation}
The number $C_B$, which can be computed explicitly as the root of a
quartic polynomial, coincides with the norm of the operator $T_0$:
  \begin{equation}
    \label{normT0}
      \|T_0\|
      =C_B\,.
\end{equation}

\end{lemma}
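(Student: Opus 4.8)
The operator $T_0$ acts on $L^\infty(\TT)$ with kernel supported on $[0,\pi]$ by
\[
T_0 v(x)=\frac{1}{2|x|u_0(x)}\int_0^\pi \bigl(K(x-y)+K(x+y)-2K(y)\bigr)\,y\,v(y)\,dy,
\]
so its norm is exactly
\[
\|T_0\|=\sup_{x\in[0,\pi]}\frac{1}{2|x|u_0(x)}\int_0^\pi\bigl|K(x-y)+K(x+y)-2K(y)\bigr|\,y\,dy,
\]
because for an integral operator on $L^\infty$ the norm is the supremum over $x$ of the $L^1$ norm in $y$ of the (signed) kernel, and this supremum is attained (formally, by $v(y)=\operatorname{sgn}$ of the kernel). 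The strategy is therefore: (i) show that the supremum in $x$ is attained in the limit $x\to 0^+$; (ii) compute that limit explicitly and identify it with $C_B$. Step (i) is where the structure of $u_0$ and the singularity of $K$ at the origin are used, and it is the main obstacle.

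\emph{Step 1: reduction to the limit at $x=0$.} Using Remark~\ref{R.K} we write $K(z)=\tfrac{1}{\sqrt{2\pi|z|}}+E_{\mathrm{reg}}(z)$ with $E_{\mathrm{reg}}$ continuous (indeed smooth away from $0$, and bounded near $0$). The contribution of $E_{\mathrm{reg}}$ to the kernel $K(x-y)+K(x+y)-2K(y)$ is $O(1)$ uniformly, while the weight $\tfrac{1}{2|x|u_0(x)}\sim \tfrac{1}{2\lambda|x|^{3/2}}$ blows up like $|x|^{-3/2}$; hence the $E_{\mathrm{reg}}$ part contributes $O(|x|^{-3/2})\cdot O(|x|^{?})$ — more care is needed: one shows $\int_0^\pi|E_{\mathrm{reg}}(x-y)+E_{\mathrm{reg}}(x+y)-2E_{\mathrm{reg}}(y)|\,y\,dy=O(x^2)$ near $x=0$ by expanding $E_{\mathrm{reg}}$ (it is even and $C^2$ near $0$ after removing the $C_{1/2}$ singularity), so that part of the norm integrand is $O(x^{1/2})\to 0$. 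Thus only the principal part $\tfrac{1}{\sqrt{2\pi}}\bigl(|x-y|^{-1/2}+|x+y|^{-1/2}-2|y|^{-1/2}\bigr)$ matters in the limit. Substituting $y=xt$ and using $u_0(x)=\lambda\sqrt{x}(1+o(1))$ (Lemma~\ref{L.estu0}, with $a^0_{00}=\lambda$) converts the expression to
\[
\frac{1}{2\lambda x^{3/2}}\cdot\frac{x^2}{\sqrt{2\pi}}\int_0^{\pi/x}\Bigl|\tfrac{1}{\sqrt{|1-t|}}+\tfrac{1}{\sqrt{1+t}}-\tfrac{2}{\sqrt t}\Bigr|\,t\,dt\cdot x^{-1/2},
\]
and after simplifying the powers of $x$ and sending $x\to 0$ (so the upper limit $\to\infty$) one gets $\tfrac{1}{2\lambda\sqrt{2\pi}}\int_0^\infty|\cdots|\,t\,dt$; with $\lambda=\sqrt{\pi/8}$ the prefactor $\tfrac{1}{2\lambda\sqrt{2\pi}}=\tfrac{1}{\pi}\cdot\sqrt{\tfrac{\pi}{\,}}$ — one checks it equals $1$, reducing the integral to $\int_0^\infty|t^{-1/2}\{(1-t)^{-1/2}+(1+t)^{-1/2}-2t^{-1/2}\}|\,t\,dt$, i.e. after the substitution $t\mapsto 1/t$ exactly $C_B$ as in~\eqref{cb}.

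\emph{Step 2: the supremum is not exceeded for $x\in(0,\pi]$.} This is the computer-assisted part. One must verify
\[
G(x):=\frac{1}{2x\,u_0(x)}\int_0^\pi\bigl|K(x-y)+K(x+y)-2K(y)\bigr|\,y\,dy\le C_B\qquad(0<x\le\pi),
\]
with equality only in the limit $x\to 0^+$. Near $x=0$ one expands $G(x)=C_B-c\,x^{p_0}+\cdots$ with $c>0$ using Lemmas~\ref{L.estu0} and~\ref{L.F0}-type estimates on $u_0$, to get a rigorous upper bound $G(x)\le C_B$ on a small interval $[0,\e_0]$; on the compact remainder $[\e_0,\pi]$ one bounds $G$ by interval arithmetic, splitting the $y$-integral to isolate the integrable singularities of $K$ at $y=x$ and $y=0$ (using the explicit $|y|^{-1/2}+\text{regular}$ decomposition of Remark~\ref{R.K}) and enclosing each piece. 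Since $C_B=0.9973\ldots$ and the numerical value of $G$ stays strictly below this away from $0$, the enclosures close with room to spare. Finally, the value $C_B=0.997362\ldots$ and its characterization as a root of a quartic come from evaluating $\int_0^\infty|(1-t)^{-1/2}+(1+t)^{-1/2}-2|t^{-5/2}\,dt$ in closed form: splitting at $t=1$, each piece is an elementary integral (antiderivatives involving $\sqrt{1\pm t}\,t^{-3/2}$ and $\sqrt{t}$), and matching up the algebraic numbers that appear leads to a quartic whose relevant root is $C_B$.

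\emph{Main obstacle.} The delicate point is Step 2 near $x=0$: showing that $G(x)$ does not overshoot $C_B$ for small positive $x$ requires that the \emph{subleading} behaviour of $u_0$ (not just $u_0\sim\lambda\sqrt x$) pushes $G$ below $C_B$, i.e. the sign of the $x^{p_0}$-coefficient must be controlled rigorously; the kernel $K$ being only given by the awkward series~\eqref{kernel1} (equivalently the Clausen representation) means all of this has to be done through the Lemma~\ref{boundsClausen0} error bounds and interval arithmetic rather than by closed-form manipulation. Everything else (the limit computation, the closed form for $C_B$) is standard once the $|y|^{-1/2}+\text{smooth}$ splitting of $K$ is in hand.
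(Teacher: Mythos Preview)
Your overall architecture matches the paper's: identify the value at $x\to 0^+$ via the singular part of $K$, then handle $x$ bounded away from $0$ by interval arithmetic. Your change of variables $y=xt$ in Step~1 to evaluate the limit is correct and in fact more direct than the paper's route; the paper instead splits the $y$-integral at $y=x$, drops the absolute value on the piece $y>x$ using the sign result of Lemma~\ref{L.signT}, and rewrites both pieces in terms of the Clausen functions $C_{1/2}$, $S_{3/2}$, $C_{5/2}$ before invoking Lemma~\ref{boundsClausen0}.

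The gap is in your Step~2 near $x=0$. You claim $G(x)=C_B-c\,x^{p_0}+\cdots$ with $c>0$, attributing the negative correction to the subleading behaviour of $u_0$. This misidentifies both the order and the mechanism. The contribution of $\hat u_0=(\lambda\sqrt x - u_0)/u_0$ is indeed $O(x^{p_0})$ with $p_0\approx 0.611$, but the dominant subleading term in $G(x)-C_B$ is of order $\sqrt{x}$ (which is larger for small $x$, since $1/2<p_0$), and it does \emph{not} come from $u_0$ at all. It arises from integrating the full kernel $K$, rather than just its leading singularity $|z|^{-1/2}$, over $y\in(x,\pi)$, together with the tail $y\in(\pi,\infty)$ that you discard when you pass from $\int_0^{\pi/x}$ to $\int_0^\infty$. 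In the paper's expansion this $\sqrt x$ coefficient comes out proportional to $(1-\sqrt 2)\,\zeta(1/2)$, and the sign analysis gives
\[
G(x)=C_B-\mathrm{const}\cdot\sqrt{x}+O(x^{p_0}),\qquad \mathrm{const}>0.
\]
Your Step~1 throws away precisely this $\sqrt x$ piece as ``$O(x^{1/2})\to 0$'' without tracking its sign; but that sign is the entire content of the near-zero inequality $G(x)\le C_B$. Moreover, the $O(x^{p_0})$ correction coming from $\hat u_0$ enters as $+C_B\,\hat u_0(x)$ in the paper's error terms, so relying on it for negativity is unsafe. The paper closes the argument by verifying the explicit numerical inequality~\eqref{ineqT0}, which says exactly that the negative $\sqrt x$ term beats the $O(x^{p_0})$ errors on $(0,\varepsilon]$; only then does the computer take over on $[\varepsilon,\pi]$.
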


\begin{proof}
  Let us start with the computation of $C_B$. For convenience we split
  $C_B=c_B^1+c_B^2$ as
  \begin{align*}
c_B^1:&=\frac{1}{\pi}\int_0^1\Big(
\frac{1}{\sqrt{1-t}}+\frac{1}{\sqrt{1+t}}-2\Big)t^{-5/2}\,dt\,,\\
c_B^2:&=
\frac{1}{\pi}\int_1^\infty\Big|
\frac{1}{\sqrt{t-1}}+\frac{1}{\sqrt{1+t}}-2\Big| t^{-5/2}\,dt\,.
\end{align*}
Notice now that the first integral is immediate,
\[
   c_B^1=\int_0^1\Big(
\frac{1}{\sqrt{1-t}}+\frac{1}{\sqrt{1+t}}-2\Big)t^{-5/2}\,dt=
\frac{2}{3} (\sqrt{2}+2)\,.
\]
Furthermore,
a simple analysis of the sign of the integrand
\[
I(t):=\frac{1}{\sqrt{t-1}}+\frac{1}{\sqrt{1+t}}-2
 \] 
 reveals that $I(t)$ is positive when $1<t<t^*$,
 where $t^*$ denotes the largest (real) root of the 
 quartic polynomial $4 t^4-4 t^3-8 t^2+4 t+5$.
 Then,
 \[
c_B^2=\int_1^{t^*}I(t)\,dt-\int_{t^*}^\infty I(t)\,dt=0.857162\ldots\,,
   \]
   so that summing both contributions we see that
   $C_B$ takes the value of~\eqref{cb}.

   From the expression of the kernel of~$T_0$
   (which is even by equation~\eqref{T0}), it is
standard that the norm of~$T_0$ is
\[
\|T_0\|:=  \sup_{0<x<\pi} \frac{1}{2|x|u_0(x)}\int_{0}^{\pi} \big| K(x-y)+K(x+y)-2K(y)\big| y\, dy\,.
 \] 
Here we have used a simple parity argument to ensure we can take
$x,y>0$ and analyze
  separately the integral in~\eqref{normT0} over the regions $x<y<\pi$
  and $0<y<x$.
  We will show next that the supremum of the above expression in the
  interval $0<x\leq\ep$, where $\ep\in (0,1)$ is a certain number, is
  attained at $x=0$, where the above integral takes the
  value~$C_B$. To compute the supremum in the interval $\ep<x<\pi$ we then proceed as explained in Appendix~\ref{S.AppendixB}. 
  
   By the formula~\eqref{kernel2} of the Whitham kernel $K$,
   we notice that
\begin{equation*}
  K(x-y)+K(x+y)-2K(y)=\frac{2}{\pi} \sum_{n=1}^\infty m(n)
  (\cos(n x)-1)\cos(n y)\,.
\end{equation*}
Moreover, this expression is positive when $y>x$ by Lemma~\ref{L.signT}.
Therefore, by the definition~\eqref{Clausen} of Clausen functions,
\begin{multline}
  \label{intT00}
  \frac{1}{2x u_0}\int_{x}^{\pi} \big|
 K(x-y)+K(x+y)-2K(y)\big|y\,dy\\
 =\frac{1}{\pi xu_0} \sum_{n=1}^\infty \frac{m(n)}{n^2}
  (\cos(n x)-1) \big((-1)^n-n x \sin (n x)-\cos (n x)\big)\\
  = \frac{1}{\pi x u_0} \Big( x S_{\frac{3}{2}}(x)-\frac{x}{2} 
  S_{\frac{3}{2}}(2 x) + \frac{\sqrt{2}-2}{4}
  (C_{\frac{5}{2}}(2x)-\ze(5/2) )\\
+\sum_{n=1}^\infty \frac{1-\sqrt{\tanh(n)}}{n^{5/2}}
  (1-\cos(n x)) \big((-1)^n-n x \sin (n x)-\cos (n x)\big)
  \Big)
\end{multline}
Using now the estimates proved in \eqref{boundsClausen0},
we readily find that
\begin{equation}
\frac{1}{2 x u_0}\int_{x}^{\pi} \big|
K(x-y)+K(x+y)-2K(y)\big|y\,dy=
c_B^1-\frac{2 (1-\sqrt{2}) \ze(1/2)}{\pi^{3/2}}(1+\hat u_0(x))\sqrt{x}+E_{T_0}^1(x)\,,
\end{equation}
with $\hat u_0$ the auxiliary function~\eqref{hatu0}
and where the error term $E_{T_0}^1(x)$ can be estimated as
\begin{multline*}
|E_{T_0}^1(x)|\leq c_B^1 \hat u_0(x)+\frac{1}{4\la}\Big(
\frac{10\sqrt{2}\ze(5/2)}{4\pi^2-x^2}+\frac{3}{\pi}
\sum_{n=1}^\infty n^{3/2}(1-\sqrt{\tanh(n)})\Big) (1+\hat
u_0(x))x^{5/2}\\
=c_B^1 \hat u_0(x)+
\frac{1}{\la} c_{T_0}^1(1+\hat u_0(x))x^{5/2}\,.
\end{multline*}

For the region $0<y<x$, we rewrite the integrand in terms of
Clausen functions and then make use of the asymptotic formulas in
order to obtain an explicit error term that is small when $x< \ep$.
In fact, observe first that
\[
  \sum_{n=1}^\infty\frac{1}{\sqrt{n}} (\cos(n x)-1)\cos(n y)=
  \frac12 \big(C_{\frac12}(x-y)+C_{\frac12}(x+y)-2 C_{\frac12}(y)\big)\,.
\]
Hence, by the series representation~\eqref{ClausenC},
\begin{multline*}
  \label{intT01}
\sqrt{\frac{2}{\pi}}\frac{1}{x^{3/2}} \int_{0}^{x} \big|
  K(x-y)+K(x+y)-2K(y)\big|y\,dy\\
  =\frac{1}{\pi x^{3/2}}\int_{0}^{x} \Big|
\frac{1}{\sqrt{x-y}}+\frac{1}{\sqrt{x+y}}-\frac{2}{\sqrt{y}}
+\sqrt{\frac{2}{\pi}}
\big(E_{C_{\frac12}}(x-y)+E_{C_{\frac12}}(x+y)-2E_{C_{\frac12}}(y)\big)\\
+2 \sqrt{\frac{2}{\pi}}
\sum_{n=1}^{\infty}\frac{1-\sqrt{\tanh(n)}}{\sqrt{n}}(1-\cos(nx))
\cos(ny)  \Big|y\,dy
\end{multline*}
Using now the fact that
$ E_{C_{\frac12}}(x-y)+E_{C_{\frac12}}(x+y)-2E_{C_{\frac12}}(y)>0$ and
the formula of $E_{C_\frac12}(x)$,
we obtain that
\begin{multline*}
 \int_{0}^x \big(E_{C_{\frac12}}(x-y)+E_{C_{\frac12}}(x+y)-2E_{C_{\frac12}}(y)\big)y\,dy\\
  =\sum_{m=1}^\infty\frac{(-1)^m}{(2m)!}\ze(1/2-2m)
  \int_{0}^x \big(|x-y|^{2m}+|x+y|^{2m}-2y^{2m}\big)y\,dy\\
  = 2\sqrt{2}\sum_{m=1}^\infty
  \ze(1/2+2m)\frac{\Ga(1/2+2m)}{\Ga(1+2m)}
  \frac{m (4^m-1) }{(m+1) (2 m+1)}
  (2\pi)^{-1/2-2m}x^{2 m+2}
  \leq c_\ep x^{4}
\end{multline*}
where the constant $c_\ep$ is given by
\begin{multline}
  c_\ep=\frac{f^{(iv)}(\ep)}{4!} \,,\\
f(x)=\frac{2}{3} \sqrt{\pi } \Big(\sqrt{2} \sqrt{\pi ^2-x^2}
  \sqrt{\sqrt{\pi ^2-x^2}+\pi }-5 \sqrt{2} \pi  \sqrt{\sqrt{\pi
      ^2-x^2}+\pi }\\
  -2 \sqrt{4 \pi ^2-x^2} \sqrt{\sqrt{4 \pi ^2-x^2}+2 \pi }+20 \pi
  \sqrt{\sqrt{4 \pi ^2-x^2}+2 \pi }-24 \pi ^{3/2}
  \Big)
\ze(5/2)\,.
\end{multline}
Furthermore, since
\begin{equation*}
 \Big| \sum_{n=1}^{\infty}
  \frac{1-\sqrt{\tanh(n)}}{\sqrt{n}}(1-\cos(nx)) \cos(ny) \Big|\leq
  \frac{x^2}{2}\sum_{n=1}^{\infty} n^{3/2}\big(1-\sqrt{\tanh(n)}\big)\,,
\end{equation*}
we arrive at the estimate
\begin{equation}
\frac{1}{2|x|u_0}\int_{0}^{x} \big|
K(x-y)+K(x+y)-2K(y)\big|y\,dy\\
\leq c_B^2+E_{T_0}^{2}(x),
\end{equation}
where
\begin{multline*}
  |E_{T_0}^{2}(x)|\leq c_B^2\hat u_0(x)+
  \frac{1}{4\pi\la} \Big(2c_\ep+\sum_{n=1}^{\infty} 
  n^{3/2}\big(1-\sqrt{\tanh(n)}\big)\Big)
  (1+\hat u_0(x))|x|^{5/2}\\
  = c_B^2\hat u_0(x)+
 \frac{1}{\la} c_{T_0}^2(1+\hat u_0(x))|x|^{5/2}
\end{multline*}

In this way, to obtain that $\|T_0\|= C_B$,
we need first to verify that
\[
 E_{T_0}^1(x)+E_{T_0}^2
  -\frac{2 (1-\sqrt{2}) \ze(1/2)}{\pi^{3/2}}(1+\hat
  u_0(x))\sqrt{x}\leq 0
\]
in the range $0<x<\ep$ for sufficiently small $\ep$.
This in turn follows from the bounds that we have derived here
together with Lemma~\ref{L.upperbounds}
and the numerical inequality
\begin{equation}\label{ineqT0}
C_B c_{\ep,p_0}\ep^{p_0-1/2}+
  (c_{T_0}^1+c_{T_0}^2)\ep^2<\frac{\sqrt{2}-2}{2\pi}\ze(1/2)\,.
\end{equation}
See Appendix~\ref{S.AppendixB} for more details and also how to deal with
the case $x\geq \ep$.
\end{proof}
Using this lemma, the inverse on $ L^{\infty}(\T)$ of the
operator $I-T_0$ can be written as a Neumann series with norm
bounded as $\|(I-T_0)^{-1}\|_{L^{\infty}}\leq \be$, where
\begin{equation}
  \label{beta}
\be:=\frac{1}{1-C_B}=379.017\ldots
\end{equation}
is a parameter that we will use hereafter.
It is well known that if we show that the mapping
$G_0:L^\infty(\T)\to L^\infty(\T)$,
\begin{equation}
  \label{G0}
  v_0 \mapsto G_0(v_0):=(I-T_0)^{-1}\Big(F_0-\frac{|x|}{2u_0} v_0^2\Big)\,,
\end{equation}
is contractive and takes the ball of radius $\ep_0$ in
$L^{\infty}(\T)$ into itself,
then the existence of a solution $v_0$ of \eqref{v0}
is guaranteed by the Banach fixed point theorem.
More precisely,
letting
\[
X_{\ep_0} :=\{v_0\in L^{\infty}(\T):\, v_0(x)=v_0(-x)\,,\ 
\|v_0\|_{L^{\infty}(\T)}\leq \ep_0 \}
\]
be the functional space on which we consider \eqref{v0},
the next result holds for the constants $\be$ and $\de_0$ of before:
\begin{prop}\label{P.ep0}
Let $u_0$ be the approximate solution~\eqref{u0} of the reduced
Whitham equation~\eqref{reducedW}
for which its associated defect $\de_0=\|F_0\|_{L^{\infty}(\T)}$
is bounded as
\[
\de_0\leq \frac{1}{4\al_0\be^2}\,,\quad \al_0:=\sup_{x\in\T}\Big|\frac{x}{2u_0(x)}\Big|\,.
\]
Then, for a sufficiently small radius $\ep_0$ such that
\[
  \frac{1-\sqrt{1-4\al_0\be^2\de_0}}{2\al_0\be}\leq \ep_0\leq \frac{1}{2\al_0\be}\,,
\]
the following statements are true:

\begin{itemize}
  \item[(1)] $ G_0(X_{\ep_0})\subseteq X_{\ep_0}$.
  \item[(2)] $\|  G_0 (v_0)- G_0( w_0)\|_{L^{\infty}(\T)}\leq
    k_0 \|v_0-w_0\|_{L^{\infty}(\T)}$ with $k_0<1$
for all $v_0\,,w_0$ in $X_{\ep_0}$.
\end{itemize}
\end{prop}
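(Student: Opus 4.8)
The plan is to run a standard Banach fixed point argument for the map $G_0$ defined in~\eqref{G0}, exploiting the quadratic structure of the nonlinearity and the bound $\|(I-T_0)^{-1}\|\leq\be$ from~\eqref{beta}. First I would record the two elementary estimates that make everything work: for $v_0,w_0\in X_{\ep_0}$ one has, pointwise on $\T$,
\[
\Big|\frac{|x|}{2u_0(x)}v_0^2(x)\Big|\leq \al_0\|v_0\|_{L^\infty}^2\leq \al_0\ep_0^2\,,
\]
and, using $v_0^2-w_0^2=(v_0+w_0)(v_0-w_0)$ together with $\|v_0+w_0\|_{L^\infty}\leq 2\ep_0$,
\[
\Big\|\frac{|x|}{2u_0}(v_0^2-w_0^2)\Big\|_{L^\infty}\leq 2\al_0\ep_0\,\|v_0-w_0\|_{L^\infty}\,.
\]
Here $\al_0=\sup_{x\in\T}|x/(2u_0(x))|$ is finite because $u_0(x)\sim\la\sqrt{x}$ near $0$ (so $x/u_0(x)\sim x^{1/2}/\la\to0$) and $u_0$ is bounded below away from $0$; this is exactly the content guaranteed by Lemma~\ref{L.estu0} and Lemma~\ref{L.F0}, where $a_{00}=\tfrac14$ forces $a^0_{00}=\la$.

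Next I would prove the invariance statement~(1). For $v_0\in X_{\ep_0}$, applying $(I-T_0)^{-1}$ and the first estimate above gives
\[
\|G_0(v_0)\|_{L^\infty}\leq \be\Big(\de_0+\al_0\ep_0^2\Big)\,.
\]
So $G_0(X_{\ep_0})\subseteq X_{\ep_0}$ follows once $\be(\de_0+\al_0\ep_0^2)\leq\ep_0$, i.e. once $\al_0\be\,\ep_0^2-\ep_0+\be\de_0\leq 0$. Under the hypothesis $\de_0\leq\tfrac{1}{4\al_0\be^2}$ the discriminant $1-4\al_0\be^2\de_0$ is nonnegative, and the quadratic is $\leq0$ precisely on the interval
\[
\ep_0\in\Big[\tfrac{1-\sqrt{1-4\al_0\be^2\de_0}}{2\al_0\be}\,,\ \tfrac{1+\sqrt{1-4\al_0\be^2\de_0}}{2\al_0\be}\Big]\supseteq\Big[\tfrac{1-\sqrt{1-4\al_0\be^2\de_0}}{2\al_0\be}\,,\ \tfrac{1}{2\al_0\be}\Big]\,,
\]
which is exactly the range of $\ep_0$ allowed in the statement. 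I must also check that $G_0$ preserves evenness: since $F_0$, $u_0$ and the kernel $K(x-y)+K(x+y)-2K(y)$ are even in $x$, the operator $T_0$ maps even functions to even functions, hence so does $(I-T_0)^{-1}$, and $v_0^2$ is even whenever $v_0$ is; therefore $G_0(v_0)(x)=G_0(v_0)(-x)$, so $G_0(v_0)\in X_{\ep_0}$.

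For the contraction statement~(2), the second elementary estimate combined with $\|(I-T_0)^{-1}\|\leq\be$ yields
\[
\|G_0(v_0)-G_0(w_0)\|_{L^\infty}\leq \be\,\Big\|\tfrac{|x|}{2u_0}(v_0^2-w_0^2)\Big\|_{L^\infty}\leq 2\al_0\be\,\ep_0\,\|v_0-w_0\|_{L^\infty}\,,
\]
so $k_0:=2\al_0\be\,\ep_0<1$ provided $\ep_0<\tfrac{1}{2\al_0\be}$. The constraint $\ep_0\leq\tfrac{1}{2\al_0\be}$ in the hypothesis gives $k_0\leq1$; the strict inequality $k_0<1$ holds as long as one takes $\ep_0$ strictly below the endpoint, which is possible whenever the lower endpoint is strictly smaller than $\tfrac{1}{2\al_0\be}$, i.e. whenever $\de_0<\tfrac{1}{4\al_0\be^2}$ (the borderline equality case degenerates to a single admissible $\ep_0$ with $k_0=1$ and is excluded, or handled by a trivial perturbation). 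The only genuinely nontrivial ingredient is the finiteness and explicit control of $\al_0$, which hinges on the precise asymptotics of $u_0$ at the origin established in Lemma~\ref{L.estu0}; everything else is bookkeeping with the quadratic $\al_0\be\ep_0^2-\ep_0+\be\de_0$. The verification that $\al_0$ and $\de_0$ satisfy the quantitative hypothesis $\de_0\leq\tfrac{1}{4\al_0\be^2}$, with $\de_0\leq 9.1\cdot10^{-8}$ from Lemma~\ref{L.F0} and $\be=379.017\ldots$, is the point where computer assistance enters, but it is a single numerical inequality rather than a new analytic difficulty.
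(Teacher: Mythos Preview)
Your proof is correct and follows essentially the same approach as the paper: both derive the invariance condition $\be(\de_0+\al_0\ep_0^2)\leq\ep_0$ and solve the resulting quadratic in $\ep_0$, both verify that $T_0$ (hence $G_0$) preserves evenness, and both obtain the contraction constant $k_0=2\al_0\be\ep_0$ via the factorization $v_0^2-w_0^2=(v_0+w_0)(v_0-w_0)$. Your discussion of the borderline case $\ep_0=\tfrac{1}{2\al_0\be}$ (where $k_0=1$) is in fact slightly more careful than the paper's, which simply asserts $k_0<1$ from the upper bound.
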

\begin{proof}
  As shown in Lemma~\ref{L.alfas}, the estimate for $\hat u_0$ given
  in Lemma~\ref{L.hatu0} yields that $\al_0\leq 2.696$.
  Moreover, by Lemma~\ref{L.F0},
  \[
    \de_0\leq 9.1\cdot 10^{-8}<\frac{1}{4\al_0\be^2}
    =5.2\cdot 10^{-7}\,.
  \]
  Using now~\eqref{G0},
  it is not difficult to show that the first
  condition above is equivalent to the inequality
  $\be\big( \de_0+\al_0\ep_0^2  \big)\leq \ep_0$,
  which holds in view of the bound from below for $\ep_0$,
  and the fact that the operator $T_0$ takes even
  functions into even functions, with $u_0$ and $F_0$ also even functions
  by construction.

  Moreover,
  \[
    \|  G_0(v_0)- G_0(w_0)\|_{L^{\infty}(\T)}\leq
    \be \sup_{x\in\T}\Big| \frac{x}{2u_0(x)} \big(v_0^2-w_0^2\big)
    \Big|
    \leq 2\al_0\be\ep_0\| v_0-w_0\|_{L^{\infty}(\T)}\,,
   \]
   which by the bound from above for $\ep_0$
   makes $k_0<1$ and completes the proof.
\end{proof}


\section{Convexity}\label{S.convexity}

In this section we prove the strong statement of the conjecture on
Whitham waves, namely the convexity of the highest cusped
travelling wave solution to~\eqref{Wphi}.
To this end, we extend the argument of the previous section and show
the existence of fixed points for mappings $G_1,G_2$,
analogous to the above nonlinear map $G_0$,
which are associated to small perturbations $v_1,v_2$ of the first and
second order derivatives of the solution of the reduced Whitham
equation~\eqref{reducedW}.
The conclusion of the main Theorem~\ref{mainthm} will follow then
from the smallness of the perturbation and the convexity of our
approximate solution $u_0$.

Let us begin by considering operators
$T_i: L^{\infty}(\T)\to L^{\infty}(\T)$ with $i=1,2$
which will play the same role as $T_0$ in~\eqref{v0}:

\begin{align}
T_1 v_1(x)&=\frac{1}{2 u_0(x)}\int_{0}^{\pi} \Big(
K(x-y)-K(x+y)+\frac{u_0'(x)}{u_0(x)}K_1(x,y) \Big) v_1(y)
            \,dy  \label{T1}\\
 T_2 v_2(x)&=\frac{|x|}{2 u_0(x)}\int_{0}^{\pi} \Big( K(x-y)+K(x+y)+\frac{2 u_0'(x)}{u_0(x)} K_2(x,y) \label{T2} \\
&\qquad+\frac{1}{(u_0(x))^2}\big(2(u'_0(x))^2-u_0(x)u_0''(x) \big)\bar K_2(x,y)
-\chi(x,y) f(x) \Big) \frac{v_2(y)}{y} \,dy \nonumber\,, 
\end{align}
where we have introduced the following functions of the Whitham kernel $K$,
\begin{align}
  \label{Kbars}
K_1(x,y)&=\int_{0}^{x+y}K(t)\, dt-\int_{0}^{x-y}K(t)\, dt-2
  \int_{0}^{y}K(t)\, dt\,,\\
K_2(x,y)&=-\int_{0}^{x+y}K(t)\, dt-\int_{0}^{x-y}K(t)\, dt\,,\\
\bar K_2(x,y)&=\int_{0}^{x-y}\int_{0}^s K(t)\, dt\, ds+\int_{0}^{x+y}\int_{0}^s K(t)\, dt\, ds-2 \int_{0}^{y}\int_{0}^s K(t)\, dt\, ds\,,
\end{align}
the step function $\chi(x,y)$ that is~1 when $y<x$ and zero otherwise,
and where
\begin{equation}
  \label{fT2}
 f(x)= 2K(x)+\frac{2
  u_0'(x)}{u_0(x)} K_2(x,0) \\+\frac{1}{(u_0(x))^2}\big(2(u'_0(x))^2-u_0(x)u_0''(x) \big)\bar K_2(x,0)\,.
\end{equation}

In the next lemma we show that the nuclei of the
three operators $T_i$ have definite sign when $y>x$ when written as above. This feature will be remarkably
useful in the computation of the norms
$\|T_i\|$ as we shall see now.
\begin{lemma}\label{L.signT}
  Let $u_0$ be our approximate solution~\eqref{u0}. Then,
  \begin{align}
&K(x-y)+K(x+y)-2K(y)\,, \label{Kpos0}\\
&K(x-y)-K(x+y)+\frac{u_0'(x)}{u_0(x)}K_1(x,y)\,, \label{Kpos1}\\ 
&K(x-y)+K(x+y)+\frac{2u_0'(x)}{u_0(x)} K_2(x,y)+\frac{1}{(u_0(x))^2}\big(2(u'_0(x))^2-u_0(x)u_0''(x) 
  \big)\bar K_2(x,y) \,, \label{Kpos2}
\end{align}
are positive functions for $y>x$.
\end{lemma}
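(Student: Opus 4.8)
The plan is to reduce the three positivity statements to convexity and monotonicity properties of the Whitham kernel~$K$ and of its iterated antiderivatives, dispatching the delicate cross terms in~\eqref{Kpos1}--\eqref{Kpos2} by combining these with the near-origin expansions of $u_0,u_0',u_0''$ from Lemma~\ref{L.estu0} (and the bounds \eqref{L.upperbound1}--\eqref{L.upperbound2}) and a finite collection of scalar inequalities verified by interval arithmetic, as detailed in Appendix~\ref{S.AppendixB}. First, from the representation~\eqref{kernel1}: for $0<|x|<\pi$ the kernel~$K$ is a superposition, against the positive weight $\tfrac1{\pi\sinh(s\pi)}(|\tan s|/s)^{1/2}$, of the maps $x\mapsto\cosh[s(\pi-|x|)]$, whose first two $x$-derivatives on $(0,\pi)$ are $-s\sinh[s(\pi-x)]<0$ and $s^2\cosh[s(\pi-x)]>0$; hence $K$ is positive, strictly decreasing and strictly convex on $(0,\pi)$, and by evenness and $2\pi$-periodicity it extends to a positive, strictly convex, $C^1$ function on $\R\setminus2\pi\Z$ with a strict minimum at every odd multiple of~$\pi$. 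Writing $\mathcal K_1(z):=\int_0^zK$ and $\mathcal K_2(z):=\int_0^z\mathcal K_1$, the function $\mathcal K_1$ is odd and strictly increasing, with $\mathcal K_1'=K$, while $\mathcal K_2$ is even, $C^1$, and convex on $(-2\pi,2\pi)$ because $\mathcal K_2''=K>0$ there. Finally, for $0<x<y\le\pi$ one has $K(y-x)\ge K(y+x)$: when $y+x\le\pi$ this is monotonicity of~$K$, and when $y+x>\pi$ it follows from $K(y+x)=K(2\pi-y-x)$ together with $y-x\le 2\pi-y-x$ and monotonicity of~$K$ on $(0,\pi)$.

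\emph{The elementary building blocks.} Fix $0<x<y<\pi$. By evenness, $K(x\pm y)=K(y\mp x)$, and since $y$ is the midpoint of $y-x$ and $y+x$ with $[y-x,y+x]\subset(0,2\pi)$, strict convexity of~$K$ gives~\eqref{Kpos0} in the form
\[
K(x-y)+K(x+y)-2K(y)=K(y-x)+K(y+x)-2K(y)>0,
\]
and the same inequality applied to the convex even function $\mathcal K_2$ shows that $\bar K_2(x,y)=\mathcal K_2(y-x)+\mathcal K_2(y+x)-2\mathcal K_2(y)>0$. Moreover $\mathcal K_1$ increasing gives $K_2(x,y)=\mathcal K_1(y-x)-\mathcal K_1(y+x)\le 0$, and, since $K_1(x,y)=\mathcal K_1(y+x)+\mathcal K_1(y-x)-2\mathcal K_1(y)$ vanishes at $x=0$ and has $x$-derivative $K(x+y)-K(y-x)\le 0$, also $K_1(x,y)\le 0$. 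Thus every ingredient of~\eqref{Kpos1}--\eqref{Kpos2} is nonnegative --- including, near the origin, the multipliers $u_0'/u_0$ and $(2(u_0')^2-u_0u_0'')/u_0^2$ by \eqref{L.upperbound1}--\eqref{L.upperbound2} --- except $K_1$ and~$K_2$, which enter with the opposite sign; the crux is that these last two contributions are dominated.

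\emph{The cross terms in~\eqref{Kpos1}--\eqref{Kpos2}.} As $y\downarrow x$ with $x$ fixed, the term $K(x-y)=K(y-x)$ blows up while every other term stays bounded, so both expressions are positive on a right-neighbourhood of the diagonal $\{y=x\}$, and it remains to treat $y$ bounded away from~$x$, which we split into $0<x\le\epsilon$ and $\epsilon\le x\le\pi$. For $0<x\le\epsilon$ we insert the expansions near~$0$ of $u_0,u_0',u_0''$ (Lemma~\ref{L.estu0}) and of $K$ (Remark~\ref{R.K}): one has $u_0'/u_0=\tfrac1{2x}+O(x^{p_0-1})$ and $(2(u_0')^2-u_0u_0'')/u_0^2=\tfrac3{4x^2}+O(x^{p_0-2})$, while Taylor expansion in~$x$ gives $K(x-y)-K(x+y)=-2xK'(y)+O(x^3)$, $K_1(x,y)=x^2K'(y)+O(x^4)$, $K_2(x,y)=-2xK(y)+O(x^3)$ and $\bar K_2(x,y)=x^2K(y)+O(x^4)$; hence the apparently singular contributions cancel at leading order --- in~\eqref{Kpos1} with net coefficient $2-\tfrac12=\tfrac32$, in~\eqref{Kpos2} with net coefficient $2-2+\tfrac34=\tfrac34$ --- leaving $\tfrac32\,x\,|K'(y)|$, respectively $\tfrac34\,K(y)$, plus lower-order errors, and bounding those errors via Lemma~\ref{boundsClausen0} reduces positivity to finitely many explicit inequalities checked by interval arithmetic. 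For $\epsilon\le x\le\pi$ we evaluate~\eqref{Kpos1}--\eqref{Kpos2} directly from the Fourier series~\eqref{kernel2} of~$K$ and the corresponding rapidly convergent series $\mathcal K_1(x)=\tfrac1\pi\sum_n\tfrac{m(n)}n\sin(nx)$ and $\mathcal K_2(x)=\tfrac1\pi\sum_n\tfrac{m(n)}{n^2}(1-\cos(nx))$, with rigorous tail bounds, verifying positivity by interval arithmetic on an adaptively refined grid in $(x,y)$ --- with extra subdivision near $y=x$, where $K(x-y)$ is large; see Appendix~\ref{S.AppendixB}.

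The main obstacle is the overlap in the last step where $x$ is small and $y$ is only slightly larger than~$x$: there one must control simultaneously the blow-up of $K(x-y)$, the blow-up of the multipliers $u_0'/u_0\sim\tfrac1{2x}$ and $(2(u_0')^2-u_0u_0'')/u_0^2\sim\tfrac3{4x^2}$, and the smallness of $K_1,K_2,\bar K_2$, so the sign of~\eqref{Kpos1}--\eqref{Kpos2} rests on a numerically delicate cancellation --- of the same flavour as the fact, exploited in Lemma~\ref{L.normT0}, that $\|T_0\|$ is only barely below~$1$ --- which forces the use of the sharp expansions of Lemma~\ref{L.estu0} rather than crude bounds, and of interval arithmetic for the final verification.
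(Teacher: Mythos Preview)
Your treatment of~\eqref{Kpos0} via convexity of $K$ is correct and essentially equivalent to the paper's. For~\eqref{Kpos1} and~\eqref{Kpos2}, however, the paper takes a much cleaner route that entirely sidesteps the two-dimensional verification and the ``main obstacle'' you flag at the end.

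The key idea you are missing is that the integral representation~\eqref{kernel1} lets one factor out the $y$-dependence \emph{exactly}. Since each of~\eqref{Kpos0}--\eqref{Kpos2} is linear in $K$ and its antiderivatives, it suffices to check positivity with $K$ replaced by its building block $t\mapsto\cosh[s(\pi-|t|)]$ for every $s>0$. For $0<x<y$ the hyperbolic addition formulas give, e.g.,
\[
\cosh\!\big(s(\pi-y+x)\big)-\cosh\!\big(s(\pi-y-x)\big)=2\sinh\!\big(s(\pi-y)\big)\sinh(sx),
\]
and analogous identities for the iterated antiderivatives, so that~\eqref{Kpos1} reduces to
\[
\sinh\!\big(s(\pi-y)\big)\Big(\sinh(sx)+\frac{u_0'(x)}{u_0(x)}\,\frac{1-\cosh(sx)}{s}\Big)\ge 0,
\]
and~\eqref{Kpos2} factors similarly as $\cosh(s(\pi-y))$ times an expression in $z=sx$ alone. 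The $y$-factor is manifestly positive, and what remains is a one-variable functional inequality such as
\[
\alpha\,\sinh z+\frac{1-\cosh z}{2z}\ge 0\qquad(z>0,\ \alpha>\tfrac14),
\]
combined with the bounds on $x\,u_0'/u_0$ and $x^2(2(u_0')^2-u_0u_0'')/u_0^2$ from Lemma~\ref{L.upperbounds}. No Taylor expansion in~$x$, no grid in $(x,y)$, and no delicate region near the diagonal: the singularities of $K(y-x)$ and of $u_0'/u_0$ are absorbed into the factorization.

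Your strategy---Taylor expand in $x$ for $x\le\epsilon$, interval arithmetic on $(x,y)$ otherwise---is plausible in outline, but the difficulty you identify at the end is not resolved. When $x$ and $y-x$ are both small, the Taylor remainder in $K(x\pm y)$ involves high derivatives of $K$ near $0$, which blow up, so the $O(x^3)$ error terms are not uniform in~$y$; meanwhile the ``$K(y-x)$ blows up'' argument near the diagonal gives a neighbourhood whose width may shrink to zero with~$x$ (since the competing term $(u_0'/u_0)K_1$ is of order $x^{-1/2}$ there). Closing that gap would require a separate scaling analysis in the regime $y-x\sim x\to 0$, which you do not provide. The paper's factorization trick makes this entire issue disappear.
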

\begin{proof}
By parity considerations and the
representation formula of the Whitham Kernel~\eqref{kernel1}
that stems from~\cite[Eq. 2.18]{EW},
it is enough to check that
\begin{multline*}
\sinh\big(s(\pi-y)\big)\Big( \sinh(s x)+\frac{(1-\cosh(s
  x))}{s}\frac{u_0'(x)}{u_0(x)}\Big)\geq 0\,,\\
\cosh\big(s(\pi-y)\big) \Big( \cosh(sx)-2\frac{u_0'(x)}{u_0(x)} \frac{\sinh(sx)}{s}\\
+\frac{1}{(u_0(x))^2}\big(2(u'_0(x))^2-u_0(x)u_0''(x) \big)\frac{(\cosh(sx)-1)}{s^2}
  \Big)\geq 0
\end{multline*}
for $y>x>0$ and $s>0$.
In fact, the proof relies on the fact that the sign in the three
expressions~\eqref{Kpos0},~\eqref{Kpos1} and~\eqref{Kpos2} depends on
the sign of some combinations of the function $\cosh [s(\pi-|x|)]$
(and its derivatives) that appears in the integrand
of~\eqref{kernel1}.

Notice first that~\eqref{Kpos0} is positive as
\begin{multline*}
  \cosh\big(s(\pi -x -y)\big)+\cosh\big(s(\pi +x -y)\big)
  -2 \cosh\big(s(\pi -y)\big)\\
  =4\cosh\big(s(\pi-y)\big)
  \sinh(\tfrac12 s x)^2>0
\end{multline*}
for all $s>0$.
Furthermore,
since the functional inequality
\[
\al \sinh(z)+\frac{1-\cosh(z)}{2z}\geq 0
\]
holds for all $z>0$ and $\al>1/4$, the positivity of~\eqref{Kpos1} follows
immediately by the bound~\eqref{L.upperbound1} stated in Lemma~\ref{L.upperbounds}.
Analogously, for the last expression we use~\eqref{L.upperbound2},
the above inequality and the fact that
\[
\cosh(z)-\frac{1}{z}\sinh(z)+\frac{3}{4 z^2}\big( \cosh(z)-1 \big)\geq 0\,.
\]
\end{proof}
\begin{lemma}\label{L.normT1}
  The norm of the operator $T_1$ satisfies that
  \begin{equation}
    \label{intT1}
    \|T_1\|=C_B\,,
  \end{equation}
  where $C_B$ is the constant defined in~\eqref{cb}.
\end{lemma}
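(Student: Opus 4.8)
The plan is to mirror the proof of Lemma~\ref{L.normT0} as closely as possible, since by Lemma~\ref{L.signT} the kernel of $T_1$ (that is, $K(x-y)-K(x+y)+\frac{u_0'(x)}{u_0(x)}K_1(x,y)$) is positive for $y>x$, so the operator norm on $L^\infty(\TT)$ is again obtained by testing against the function identically equal to $1$ on $(0,\pi)$ and splitting the $y$-integral at $y=x$. Thus
\[
\|T_1\|=\sup_{0<x<\pi}\frac{1}{2u_0(x)}\int_0^\pi\Big|K(x-y)-K(x+y)+\frac{u_0'(x)}{u_0(x)}K_1(x,y)\Big|\,dy\,,
\]
and the whole task reduces to showing that this supremum is attained as $x\to 0^+$ and that its limiting value is exactly $C_B$. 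For $x\in(\ep,\pi)$ one invokes the computer-assisted bound of Appendix~\ref{S.AppendixB}, exactly as in Lemma~\ref{L.normT0}; the analytic content is confined to the interval $0<x\le\ep$.

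On $(0,\ep)$ I would extract the leading behavior of the integral. First I would use the representation $K(x)=\frac{1}{\sqrt{2\pi|x|}}+E_{\mathrm{reg}}(x)$ from Remark~\ref{R.K}, together with $K_1(x,y)=\int_0^{x+y}K-\int_0^{x-y}K-2\int_0^yK$, which at leading order contributes the antiderivative of $\frac{1}{\sqrt{2\pi t}}$, i.e.\ terms of size $\sqrt{\cdot}$. Using $\frac{u_0'(x)}{u_0(x)}=\frac{1}{2x}+O(x^{p_0-1})$ from Lemma~\ref{L.upperbounds} (estimate~\eqref{L.upperbound1}), the combination $K(x-y)-K(x+y)+\frac{1}{2x}K_1(x,y)$ should, after the rescaling $y=xt$ and $dy=x\,dt$ on $(0,x)$ and a parallel treatment on $(x,\pi)$, produce precisely the integrand $\big|\frac{1}{\sqrt{1-t}}+\frac{1}{\sqrt{1+t}}-2\big|t^{-5/2}$ whose integral over $(0,\infty)$ is $C_B$; the weight $u_0(x)\sim\la\sqrt{x}$ in the denominator supplies exactly the right power of $x$ for the leading term to be $x$-independent. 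The subleading terms — those coming from $E_{\mathrm{reg}}$, from $\hat u_0(x)=O(x^{p_0})$, and from the $O(x^{p_0-1})$ error in $u_0'/u_0$ — must all be collected into an explicit error term $E_{T_1}(x)$, bounded exactly as in the $T_0$ case using Lemma~\ref{boundsClausen0}, Lemma~\ref{L.hatu0} and Lemma~\ref{L.upperbounds}, and shown to be nonpositive (so that the supremum is indeed attained at $x=0$) via a numerical inequality of the same flavor as~\eqref{ineqT0}.

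The main obstacle, and the reason this is not a verbatim copy of Lemma~\ref{L.normT0}, is the presence of the extra term $\frac{u_0'(x)}{u_0(x)}K_1(x,y)$ and the sign analysis it entails: unlike $T_0$, whose kernel splits cleanly into a $\frac{1}{\sqrt{\cdot}}$ part plus a regular Fourier tail, here one must simultaneously track the antiderivative $K_1$ and the ratio $u_0'/u_0$, and verify that the leading $\frac{1}{2x}K_1$ piece reassembles into the same universal integrand. Concretely one must expand $K_1(x,xt)$ for $0<t<1$ and for $t>1$, keeping the $\sqrt{x(1\pm t)}$ and $\sqrt{xt}$ contributions, and check that $x^{-1}\cdot\frac{1}{2x}\cdot\big(\text{leading part of }K_1\big)$ has the claimed form with the correct constant; the antisymmetric combination $K(x-y)-K(x+y)$ is, at leading order, $\frac{1}{\sqrt{2\pi}}\big(|x-y|^{-1/2}-(x+y)^{-1/2}\big)$, and after the rescaling this is the $\frac{1}{\sqrt{1-t}}-\frac{1}{\sqrt{1+t}}$ part, while $\frac{1}{2x}K_1$ supplies the $-2$. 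A careful bookkeeping of all the error contributions — in particular the ones involving $(1-\sqrt{\tanh(n)})$ sums weighted by powers of $n$ — is tedious but routine, and is exactly the kind of estimate deferred to Appendix~\ref{S.AppendixB} in the companion lemmas. I would also note the parity: $T_1$ maps odd functions to odd functions (since $u_0'$ is odd and $K(x-y)-K(x+y)$ is odd in $x$), which is consistent with $v_1$ representing a perturbation of the odd function $u'$, but for the purpose of computing the $L^\infty$ norm one still tests against $1$ on $(0,\pi)$ as above.
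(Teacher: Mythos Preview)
Your overall strategy is exactly the one the paper follows: split at $y=x$, use Lemma~\ref{L.signT} for the sign on $y>x$, extract the leading $\frac{1}{\sqrt{2\pi|\cdot|}}$ part of $K$ together with $u_0'/u_0=\frac{1}{2x}+O(x^{p_0-1})$, and defer $x>\ep$ to the computer check. So the architecture is fine.

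There is, however, a concrete error in the leading-order computation. The rescaling $y=xt$ does \emph{not} produce the $T_0$ integrand $\big|\frac{1}{\sqrt{1-t}}+\frac{1}{\sqrt{1+t}}-2\big|t^{-5/2}$. The term $\frac{1}{2x}K_1(x,y)$ does not ``supply the $-2$'': since $\int_0^s K\sim\sqrt{2s/\pi}$, the leading part of $K_1$ is $\sqrt{2/\pi}\big(\sqrt{x+y}-\operatorname{sgn}(x-y)\sqrt{|x-y|}-2\sqrt{y}\big)$. After dividing by $2u_0\sim\sqrt{\pi x/2}$ and rescaling, the leading-order integrand on $0<y<x$ is
\[
\frac{1}{\pi}\Big|\frac{1}{\sqrt{1-t}}-\frac{1}{\sqrt{1+t}}+\sqrt{1+t}-\sqrt{1-t}-2\sqrt{t}\Big|\,,
\]
and similarly on $y>x$ with the appropriate sign changes. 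These are genuinely different from the $T_0$ integrands, and what you are missing is the algebraic identity that makes the values coincide: using $\frac{s}{\sqrt{s\mp 1}}=\pm\frac{1}{\sqrt{s\mp 1}}+\sqrt{s\mp 1}$ together with the substitution $s\mapsto 1/s$ one checks that
\[
\frac{1}{\pi}\int_0^1\Big|\frac{1}{\sqrt{1-t}}-\frac{1}{\sqrt{1+t}}+\sqrt{1+t}-\sqrt{1-t}-2\sqrt{t}\Big|\,dt=c_B^2
\]
and the analogous integral over $(1,\infty)$ equals $c_B^1$. The paper records exactly these two identities in its proof. Without them your argument does not close: you would get the correct constant only by accident, and as written your identification of the integrand is simply wrong. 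Once you insert the correct leading term and verify this identity, the remainder of your outline (collecting the $\hat u_0$, $E_{C_{1/2}}$, $E_{S_{3/2}}$ and $(1-\sqrt{\tanh n})$ contributions into an explicit $E_{T_1}(x)$ and checking a numerical inequality analogous to~\eqref{ineqT0}) matches the paper's proof.
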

\begin{proof}
  As in the proof of the norm of $T_0$,
  we divide the integral \eqref{intT1} into two pieces and make use of the
  bounds for the Clausen functions to show that the sum is bounded by $C_B$
  for $x\leq \ep$, while for $x> \ep$ the proof is detailed in
  Appendix~\ref{S.AppendixB}.

  Let us first analyse the integral when $x<y<\pi$ (as before, we can
  assume that $x$ and $y$ are positive by parity):
  \begin{multline}
    \label{intT10}
\frac{1}{2 u_0(x)}\int_{x}^{\pi} \Big(
K(x-y)-K(x+y)+\frac{u_0'(x)}{u_0(x)}K_1(x,y) \Big)\,dy\\
=\frac{1}{\pi u_0(x)}\Big[
S_{\frac32}(x)+\frac{1-\sqrt{2}}{2} S_{\frac32}(2x)
+\frac{2-\sqrt{2}}{4}
\big( C_{\frac52}(2x)-\ze(5/2)\big)\frac{u_0'(x)}{u_0(x)}\\
+\sum_{n=1}^\infty \frac{1-\sqrt{\tanh(n)}}{n^{3/2}}\Big(
\sin(n x)+\frac{u_0'(x)}{ n u_0(x)}\big( \cos(n x) -1 \big)\Big)
\big((-1)^n- \cos(n x)\big)
\Big]\,.
\end{multline}
Noticing that
\begin{multline*}
\sum_{n=1}^\infty \frac{1-\sqrt{\tanh(n)}}{n^{3/2}}\Big(
\sin(n x)+\frac{u_0'(x)}{ n u_0(x)}\big( \cos(n x) -1 \big)\Big)
\big((-1)^n- \cos(n x)\big)\\
\leq\frac54 x \sum_{n=1}^\infty \frac{|1-(-1)^n|}{\sqrt{n}}\big(1-\sqrt{\tanh(n)}\big)
+\frac58 x^3 \sum_{n=1}^\infty n^{3/2}\big(1-\sqrt{\tanh(n)}\big)\,,
\end{multline*}
and using Lemma~\ref{boundsClausen0} combined with Lemma~\ref{L.upperbounds},
we then have that
\begin{multline}
  \frac{1}{2 u_0(x)}\int_{x}^{\pi} \Big(
  K(x-y)-K(x+y)+\frac{u_0'(x)}{u_0(x)}K_1(x,y) \Big)\,dy
  \\
  \leq c_B^1-\frac{1}{4\pi\la }\Big(
  3(\sqrt{2}-2)\ze(1/2)+
  5\sum_{n=1}^\infty\frac{|1-(-1)^n|}{\sqrt{n}}
  \big(1-\sqrt{\tanh(n)}\big)
  \Big)(1+\hat u_0(x))\sqrt{x} +E_{T_1}^1(x)\\
  =c_B^1-\frac{1}{\la}c_{\frac12} (1+\hat u_0(x))\sqrt{x}+E_{T_1}^1(x)\,,
\end{multline}
with
\begin{multline*}
  |E_{T_1}^1(x)|\leq c_B^1\hat u_0(x)
  +\frac{c_{\ep,p_0}' (\sqrt{2}-1) }{\la \pi\sqrt{\pi}} \Big(
  \frac{2\sqrt{2\pi}}{3}+\frac{|\ze(1/2)|}{2}\sqrt{x}
  +\frac{|\ze(5/2)|}{\sqrt{\pi}}\frac{x^{5/2}}{4\pi^2-x^2}
  \\
   +\frac{5 \sqrt{\pi}}{8 c_{\ep,p_0}' (\sqrt{2}-1)}\sum_{n=1}^\infty
  n^{3/2}\big(1-\sqrt{\tanh(n)}\big)x^{5/2-p_0}
  \Big)(1+\hat u_0(x))x^{p_0}\\
\leq  c_B^1\hat u_0(x)+\frac{1}{\la}c_{\ep,p_0}' c_{T_1}^1(1+\hat u_0(x))x^{p_0}
  \,,
\end{multline*}
and where we have used that
\[
  \frac{1}{\pi \sqrt{x}}\int_{x}^\pi \Big(
  \frac{1}{\sqrt{y-x}}-\frac{1}{\sqrt{x+y}}+
\frac{1}{x} \big(\sqrt{x+y}+\sqrt{y-x}-2 \sqrt{y}\big)\Big)\,dy=c^1_B\,.
\]

As for the integral in the region $0<y<x$,
the proof relies on the formula
\begin{multline*}
  \sum _{n=1}^{\infty } \frac{1}{\sqrt{n}}\sin (n y) \Big(\sin (n
 x)+\frac{1}{2 n x}\big(\cos (n x)-1\big)\Big)\\
 =\frac12
 \Big(C_{\frac12}(x-y)-C_{\frac12}(x+y)+
\frac{1}{2x}\big(S_{3/2}(x+y)-S_{3/2}(x-y)-2 S_{3/2}(y)\big)\Big)\,,
\end{multline*}
the estimates for the Clausen functions stemming from Lemma~\ref{boundsClausen0}
and the value of the integral
\[
\frac{1}{\pi \sqrt{x}}\int_{0}^x\Big|\frac{1}{\sqrt{x-y}}-\frac{1}{\sqrt{x+y}}+
\frac{1}{x} \big(\sqrt{x+y}-\sqrt{x-y}-2 \sqrt{y}\big)\Big|\,dy=c^2_B\,.
\]
In fact, we have
\begin{multline}
 \frac{1}{2 u_0(x)}\int_{0}^{x} \Big|
 K(x-y)-K(x+y)+\frac{u_0'(x)}{u_0(x)}K_1(x,y) \Big|\,dy\\
 =\frac{1+\hat u_0(x)}{\pi \sqrt{x}}\int_{0}^{x} \Big|
\frac{1}{\sqrt{x-y}}-\frac{1}{\sqrt{x+y}}+
\frac{1}{x} \big(\sqrt{x+y}-\sqrt{x-y}-2 \sqrt{y}\big)\\
+2\Big( \frac{u_0'(x)}{u_0(x)}-\frac{1}{2x}\Big)\big( \sqrt{x+y}-\sqrt{x-y}-2 \sqrt{y}\big) \\
+\sqrt{\frac{2}{\pi}}\Big(
E_{C_{1/2}}(x-y)-E_{C_{1/2}}(x+y)+\frac{u_0'(x)}{u_0(x)}\big(
E_{S_{3/2}}(x+y)-E_{S_{3/2}}(x-y)-2E_{S_{3/2}}(x+y)
\big)\Big)\\
+2 \sqrt{\frac{2}{\pi}}\sum _{n=1}^{\infty }
\frac{1-\sqrt{\tanh(n)}}{\sqrt{n}}\sin (ny)
\Big(\sin (n x)+\frac{u_0'(x)}{nu_0(x)}\big(\cos (n x)-1\big)\Big)
\Big|\,dy
\leq c_B^2+E_{T_1}^2(x)\,,
\end{multline}
where
\begin{multline}
  |E_{T_1}^2(x)|\leq c_B^2\hat u_0+
  \frac{8 c_{\ep,p_0}'}{15\pi\la}(5-5\sqrt{2}+2\sqrt{5})(1+\hat u_0(x))x^{p_0}\\
  +\frac{1}{\la}\Big(\frac{5}{4\pi}\sum _{n=1}^{\infty }
  n^{1/2}\big(1-\sqrt{\tanh(n)}\big)+c_{\ep}'x\Big)(1+\hat u_0(x))x^{3/2}\\
\leq  c_B^2\hat u_0+\frac{1}{\la}c_{\ep,p_0}' c_{T_1}^2(1+\hat u_0(x)) x^{p_0}+\frac{1}{\la}c_{T_1}^3(1+\hat u_0(x)) x^{3/2}
  \,,
\end{multline}
where the number $c_\ep'$ is a bound for the integrals coming from the
Clausen error terms:
\begin{multline*}
  c_\ep':=\frac{g'''(\ep)}{3!}\,,\\
  g(x):=\frac{x}{\sqrt{\pi }}  \Big(\frac{\sqrt{2}}{\sqrt{\sqrt{\pi
          ^2-x^2}+\pi }}-\frac{2}{\sqrt{\sqrt{4 \pi ^2-x^2}+2 \pi
      }}\Big) \ze(5/2)\\
  +\frac{2}{3 \sqrt{\pi }x}
    \Big(x \big(-\sqrt{\pi -x}+\sqrt{x+\pi }-\sqrt{2} \sqrt{x+2 \pi}
        +\sqrt{4 \pi -2 x}\big)\\
      +\pi  \big(\sqrt{\pi -x}+\sqrt{x+\pi }-2 \sqrt{2} \sqrt{x+2 \pi }-2 \sqrt{4 \pi -2 x}\big)+6 \pi ^{3/2}\Big) \ze(5/2)\,.
\end{multline*}
Then, since the numerical inequality
\begin{equation}\label{ineqT1}
 ( C_B c_{\ep,p_0}+c_{\ep,p_0}' (c_{T_1}^1+c_{T_1}^2) )\ep^{p_0-1/2}+c_{T_1}^3\ep<c_{\frac12}\,,
\end{equation}
holds for small enough $\ep$, it follows that
$\|T_1\|= C_B$

\end{proof}



Likewise, using Lemma~\ref{L.signT} we can prove that the norm of the
operator $T_2$ on $L^\infty(\T)$ is also less than 1.
Since the proof entails some careful computations related to the
singularity $y^{-1}$ in the integrand of~\eqref{fT2},
we leave the details for Appendix~\ref{S.AppendixA}.


\begin{lemma}\label{L.normT2}
   The norm of $T_2$ also agrees with the constant $C_B$, i.e.
$\|T_2\|=C_B$.
 \end{lemma}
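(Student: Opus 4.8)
The plan is to mirror the proofs of Lemmas~\ref{L.normT0} and~\ref{L.normT1}, splitting the integral defining $\|T_2\|$ over the regions $0<y<x$ and $x<y<\pi$, but now paying special attention to the new $y^{-1}$ weight and the subtracted term $\chi(x,y)f(x)$, which is precisely engineered to cancel the non-integrable $y^{-1}$ singularity of the kernel at $y=0$ in the region $y<x$. First I would observe, using Lemma~\ref{L.signT}, that the combination~\eqref{Kpos2} is positive for $y>x$, so on that region the absolute value may be dropped and the integral computed in closed form in terms of Clausen functions $S_{3/2}, S_{5/2}, C_{5/2}, C_{7/2}$ (obtained by integrating the Fourier series~\eqref{kernel2} against $v_2(y)/y$ with $v_2\equiv 1$, which converts powers of $n$ in the denominator into Clausen functions evaluated at $x$ and $2x$, as in~\eqref{intT00} and~\eqref{intT10}). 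The leading term will again be $\frac{x}{2u_0}\cdot(\text{something})$ whose $x\to 0$ limit is $C_B$, and the bounds of Lemma~\ref{boundsClausen0} together with Lemma~\ref{L.upperbounds} (crucially~\eqref{L.upperbound1} and~\eqref{L.upperbound2}, which control $\frac{u_0'}{u_0}$ and $\frac{2(u_0')^2-u_0u_0''}{u_0^2}$ against $\frac1{2x}$ and $\frac3{4x^2}$) will produce an explicit error term of size $O(x^{p_0})$ with a correction of the same negative sign $-c\,(1+\hat u_0(x))\sqrt x$ as before.

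For the region $0<y<x$ the analysis is more delicate: here I would expand $K$, $K_2$ and $\bar K_2$ around their leading $|y|^{\pm 1/2}$, $|y|^{3/2}$ behavior using Remark~\ref{R.K} and the antiderivative formulas, writing the integrand as the leading singular piece $\frac{|x|}{2u_0}\big(\frac1{\sqrt{x-y}}+\frac1{\sqrt{x+y}}+\frac{u_0'}{u_0}(\cdots)+\frac{2(u_0')^2-u_0u_0''}{u_0^2}(\cdots)-f(x)\big)\frac1y$ plus Clausen error terms and the tail $\sum_n \frac{1-\sqrt{\tanh n}}{\cdots}$. The function $f(x)$ in~\eqref{fT2} is exactly the $y\to 0$ limit of the bracket, so the combination in parentheses vanishes to order $y$ at $y=0$ and the $\frac1y$ weight is harmless; substituting the upper bounds~\eqref{L.upperbound1},~\eqref{L.upperbound2} with $\frac1{2x},\frac3{4x^2}$ and computing the resulting elementary integral $\frac1{\pi}\int_0^x\big|\frac1{\sqrt{x-y}}+\frac1{\sqrt{x+y}}-\frac2{\sqrt y}+\frac1x(\ldots)+\frac1{x^2}(\ldots)-f_{\mathrm{lead}}\big|\frac{dy}{y}$ should again give exactly $c_B^2$ in the $x\to 0$ limit, with an $O(x^{5/2})$ or $O(x^{p_0})$ error. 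Summing the two regions, the bound $\|T_2\|\le C_B$ on $0<x<\ep$ reduces to a numerical inequality of the shape~\eqref{ineqT0}/\eqref{ineqT1}, i.e. $C_B c_{\ep,p_0}\ep^{p_0-1/2}+(\text{explicit constants})\ep^{1/2}<(\text{negative-sign coefficient})$, checked by interval arithmetic; for $\ep\le x<\pi$ the supremum is computed directly as in Appendix~\ref{S.AppendixB}. Finally, evaluating at $x=0$ (where all the $\frac{u_0'}{u_0}$, $\frac{u_0''}{u_0}$ corrections assemble into the exact coefficients $\tfrac12,\tfrac34$ appearing in the definition of $C_B$ in~\eqref{cb}) shows the supremum is attained there and equals $C_B$, giving $\|T_2\|=C_B$.

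The main obstacle I expect is bookkeeping the $y^{-1}$ singularity rigorously: unlike the $T_0$ and $T_1$ cases, each of the three kernel pieces $K(x\pm y)$, $K_2(x,y)$, $\bar K_2(x,y)$ individually behaves like a non-integrable $y^{-1}\cdot y^{-1/2}$ or worse near $y=0$, so one must never split the integral before subtracting $f(x)$; the cancellation has to be carried through symbolically, and the Clausen-error contributions (which appear divided by $y$) must be shown to remain integrable, requiring the $E$-bounds of Lemma~\ref{boundsClausen0} to be used on the \emph{combined} differences $E(x-y)+E(x+y)-2E(y)$ rather than term by term, exactly as in the displayed computation $\int_0^x(E_{C_{1/2}}(x-y)+E_{C_{1/2}}(x+y)-2E_{C_{1/2}}(y))y\,dy$ in the proof of Lemma~\ref{L.normT0} but now with an extra $y^{-1}$. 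Because of this, and the sheer number of terms, the bulk of the verification is delegated to Appendix~\ref{S.AppendixA} with the numerical inequalities discharged by interval arithmetic in Appendix~\ref{S.AppendixB}.
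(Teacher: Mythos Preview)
Your overall architecture matches the paper's proof in Appendix~\ref{S.AppendixA}: split at $y=x$, use Lemma~\ref{L.signT} to drop the absolute value on $y>x$, exploit that $f(x)$ is exactly the $y\to 0$ limit of the bracket so the $y^{-1}$ weight is integrable on $y<x$, and close with a numerical inequality plus the computer verification for $x\ge\ep$. That part is fine.

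There is, however, a concrete gap in your treatment of the region $x<y<\pi$. You assert that the integral there ``can be computed in closed form in terms of Clausen functions $S_{3/2},S_{5/2},C_{5/2},C_{7/2}$'' by analogy with~\eqref{intT00} and~\eqref{intT10}. That analogy fails: in the $T_0$ and $T_1$ cases the weights against $\cos(ny)$ were $y$ and $1$, whose primitives are elementary and, after summing in $n$, produce Clausen functions of $x$ and $2x$. Here the weight is $y^{-1}$, so the relevant integral is
\[
\int_x^\pi \frac{\cos(ny)}{y}\,dy=\operatorname{Ci}(n\pi)-\operatorname{Ci}(nx),
\]
the cosine integral, which does \emph{not} resum into Clausen functions. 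The paper works directly with $\operatorname{Ci}$ and uses the expansion $\operatorname{Ci}(nx)\sim\gamma+\log n+\log x$ near $x=0$; this forces logarithmic corrections into the error bound. Consequently the error on $y>x$ is not of the clean form $O(x^{p_0})$ you anticipate: the paper's $E_{T_2}^1(x)$ contains a term $\tilde c_{1/2}\,\log(x)\sqrt{x}$, the negative $\sqrt{x}$ correction has coefficient $c''_{1/2}=-\tfrac{1}{3\pi}\log(\pi)\zeta(1/2)$, and the final numerical inequality~\eqref{ineqT2} necessarily involves $\log(\ep)$ on the right-hand side. An auxiliary numerical check~\eqref{ineqcosint} on the $\operatorname{Ci}(n\pi)$ terms is also needed. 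Your proposed inequality of the shape~\eqref{ineqT0}/\eqref{ineqT1} (a polynomial condition in $\ep$) would therefore not close the argument as written; you must carry the $\log x$ through from the cosine integral and adjust the endgame accordingly.
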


In what follows, we consider an exact solution $u$ of
\eqref{reducedW} and exploit the fact that we already have an
approximation $u_0$ that allows us to show the existence of a solution
with (almost) the right asymptotic behavior.
Along with the perturbation $v_0$ given in~\eqref{v0},
here we introduce bounded perturbations
$v_1$ and $v_2$ of the first and second derivatives of $u$:
\begin{equation}
  \label{perturbations}
u=u_0(x)+|x| v_0(x)\,,\quad
u'(x)=u_0'(x)+v_1(x)\,,\quad
u''(x)=u_0''(x)+\frac{v_2(x)}{|x|}\,.
\end{equation}

It is not difficult then to show that for a fixed $v_0$,
the perturbations $v_1$ and $v_2$ obey the following linear equations:
\begin{lemma}
Let $v_1\,,v_2$ be defined as above and let $F_0(x),F_1(x)$ and
$F_2(x)$ be the error terms defined in Lemmas~\ref{L.F0} and
~\ref{L.F1F2}:
\[
F_0(x)=\frac{1}{2|x| u_0}(\L u_0-u_0^2)\,,\quad F_1(x)=F_0'(x)\,,\quad F_2(x)=|x|F_0''(x)\,.
\]
Then, the functions $v_1$ and $v_2$ satisfy the system
\begin{align}
(I-T_1) v_1(x)&=F_1(x)+\frac{x^2u_0'(x)}{2u_0^2(x)}v_0^2(x)-\frac{|x|}{u_0(x)}
                v_0(x) v_1(x)\,,   \label{v1}
                \\
  (I-T_2)v_2(x)&=F_2(x)+\frac{|x|}{2 u_0(x)} f(x)v_1(x)
                 +\frac{|x|^3}{2u_0^3(x)}\big(u_0(x)u_0''(x)-2(u'_0(x))^2\big)v_0^2(x)                 \label{v2}
  \\
              &\qquad \qquad\qquad
                -\frac{|x|}{u_0(x)}v_1(x)^2+\frac{2x^2u_0'(x)}{u_0^2(x)}v_0(x)v_1(x)
                -\frac{|x|}{u_0(x)} v_0(x) v_2(x)\,.\nonumber
\end{align}
\end{lemma}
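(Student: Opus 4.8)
The statement to prove is the system of linear equations \eqref{v1}--\eqref{v2} for the perturbations $v_1,v_2$ of the first and second derivatives of an exact solution $u$ of the reduced Whitham equation \eqref{reducedW}, where $v_0$ is the (already fixed) perturbation of $u$ itself as in \eqref{perturbations}. The plan is purely computational: differentiate the identity $u^2=\L u$ once and twice, substitute the ansatz \eqref{perturbations}, and collect terms. There is no real analytic obstacle here --- the only thing to be careful about is the bookkeeping of the weights $|x|$ and $|x|^{-1}$ and the fact that $\L$ acts on $|\cdot|v_0(\cdot)$, so that differentiating $\L(|\cdot|v_0)$ in $x$ produces the boundary contributions that generate the kernels $K_1,K_2,\bar K_2$ and the function $f$.

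\textbf{Step 1: differentiate the reduced equation.} Starting from $u^2=\L u$, i.e.\ $2u u' = (\L u)'$ and $2(u')^2 + 2 u u'' = (\L u)''$. Using \eqref{L} one has $\L u(x) = \int_0^\pi\big(K(x-y)+K(x+y)-2K(y)\big)u(y)\,dy$ (after the parity reduction already used in defining $T_0$), so $(\L u)'(x)=\int_0^\pi\big(K'(x-y)+K'(x+y)\big)u(y)\,dy$. Integrating by parts in $y$ --- which is where the antiderivatives $\int_0^{x\pm y}K$ and the boundary terms at $y=x$ come in --- rewrites $(\L u)'$ and $(\L u)''$ in terms of $u'$, $u''$ and the kernels $K_1,K_2,\bar K_2$ of \eqref{Kbars}; this is precisely the computation that produces the $\chi(x,y)f(x)$ term in \eqref{T2} and the function $f(x)$ of \eqref{fT2}.

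\textbf{Step 2: insert the ansatz and isolate the principal linear part.} Write $u=u_0+|x|v_0$, $u'=u_0'+v_1$, $u''=u_0''+v_2/|x|$, and recall from \eqref{F0} that $\L u_0-u_0^2=2|x|u_0 F_0$, together with $F_1=F_0'$, $F_2=|x|F_0''$ from \eqref{F1F2}. Substituting into the once- and twice-differentiated equations of Step 1 and using that $u_0$ solves the equation up to the defect $F_0$, every occurrence of $u_0$ and its $\L$-image cancels against the $F_i$ terms, the terms linear in $v_1$ (resp.\ $v_2$) assemble into $(I-T_1)v_1$ (resp.\ $(I-T_2)v_2$) by the very definitions \eqref{T1}, \eqref{T2}, and the remaining terms --- which are quadratic in $v_0$, bilinear in $v_0,v_1$, or quadratic in $v_1$, each multiplied by the appropriate power of $|x|$ and coefficients built from $u_0,u_0',u_0''$ --- are exactly the right-hand sides of \eqref{v1} and \eqref{v2}. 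Matching the coefficients $x^2 u_0'/(2u_0^2)$, $|x|/u_0$, $|x|^3(u_0 u_0''-2(u_0')^2)/(2u_0^3)$, etc., is a direct algebraic check.

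\textbf{Main obstacle.} There is no genuine difficulty of principle; the only delicate point is the differentiation of the nonlocal term and the correct handling of the weight $|x|$ across $x=0$ (the solution is only $C^{1/2}$ there, so one works on $\T\setminus\{0\}$ and the ansatz \eqref{perturbations} with $v_1,v_2\in L^\infty$ is the natural framework), making sure the boundary terms from the integration by parts in Step 1 are reproduced exactly by $f$ and $\chi$. Provided one keeps track of these, the lemma follows by a ``long but straightforward computation'' of the same flavour as \eqref{Lu0-u02}, so I would state it as such and relegate the full expansion to the verification left implicit in the text.
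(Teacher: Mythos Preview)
Your plan is correct and coincides with the paper's own argument: write $u=u_0+\bar u$, derive the equation $\bar u-\tfrac{1}{2u_0}\L\bar u=|x|F_0-\tfrac{1}{2u_0}\bar u^2$, differentiate once (resp.\ twice), integrate by parts in~$y$ to pass the derivatives from the kernel onto $\bar u$, and then set $\bar u'=v_1$, $|x|\bar u''=v_2$. The paper records exactly this and notes that the boundary terms at $y=0,\pi$ vanish by the evenness and $2\pi$-periodicity of $K$.

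One small correction to your narrative: the $\chi(x,y)f(x)$ subtraction in $T_2$ and the companion term $\tfrac{|x|}{2u_0}f(x)v_1(x)$ on the right of \eqref{v2} do \emph{not} arise as ``boundary terms at $y=x$'' from the integration by parts. After two integrations by parts the integrand contains $\bar u''(y)=v_2(y)/y$, whose $1/y$ factor is not integrable against a generic $L^\infty$ function. One splits off the value of the kernel at $y=0$ on the region $y<x$, namely $f(x)$, so that the remaining kernel vanishes at $y=0$; the piece removed is
\[
\frac{|x|}{2u_0(x)}\,f(x)\int_0^x \frac{v_2(y)}{y}\,dy
=\frac{|x|}{2u_0(x)}\,f(x)\int_0^x \bar u''(y)\,dy
=\frac{|x|}{2u_0(x)}\,f(x)\,v_1(x),
\]
using $\bar u'(0)=0$ by evenness. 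That is the origin of both the $\chi f$ regularization inside $T_2$ and the $fv_1$ term on the right-hand side. With this bookkeeping in place your Step~2 goes through verbatim.
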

\begin{proof}
  As the proofs follow the same structure, we only give the details
  for~\eqref{v1}. Let us write $u(x)=u_0(x)+\bar u(x)$.
  Since
  \[
    \bar u(x)-\frac{1}{2 u_0(x)}\L \bar u(x)=
    F_0(x)-\frac{1}{2 u_0(x)}\bar u^2(x)\,,
  \]
  it is clear that taking $\bar u(x)=|x| v_0(x)$ we
  obtain~\eqref{v0}.
  On the other hand, differentiating in the above
  equation we have that
  \begin{multline*}
    F_1(x)+\frac{u_0'(x)}{2 u_0^2(x)}\bar u^2(x)-\frac{1}{u_0(x)}
    \bar u'(x) \bar u(x)\\
    =\bar u'(x)+ \frac{u_0'(x)}{2 u_0^2(x)}
    \int_0^\pi\big( K(x+y)+K(x-y)-2K(y)\big)\bar u(y)\,dy\\
    -\frac{1}{2 u_0(x)}\int_0^\pi \big(K'(x+y)+K'(x-y)\big)\bar
    u(y)\,dy\\
    =\bar u'(x) +\frac{u_0'(x)}{2 u_0^2(x)}\int_0^\pi\dy K_1(x,y)\bar u(y)\,dy      +\frac{1}{2 u_0(x)}\int_0^\pi \dy\big(K(x-y)-K(x+y)\big)\bar
    u(y)\,dy\,.
  \end{multline*}
  Integrating by parts again (where the boundary terms are zero by
  parity), the definition of the operator $T_1$ and letting
  $v_1(x):=\bar u'(x)$ above give~\eqref{v1}.
 The formula~\eqref{v2} simply follows by differentiating
 twice, integrating by parts and defining $v_2(x)=\bar u''(x)/|x|$.
\end{proof}

In a complete analogous manner to the previous section,
we define mappings $G_i\,, i=1,2$ on $L^{\infty}(\T)$,
\begin{align}
  \label{G1}
G_1
  v_1(x)&:=(I-T_1)^{-1}\Big(F_1(x)+\frac{x^2u_0'(x)}{2u_0^2(x)}v_0^2(x)
          -\frac{|x|}{u_0(x)}v_0(x) v_1(x)\Big)\\
G_2v_2(x)&:=(I-T_2)^{-1}\Big(F_2(x)+\frac{|x|}{2 u_0(x)} f(x)v_1(x)
                 +\frac{|x|^3}{2u_0^3(x)}\big(u_0(x)u_0''(x)-2(u'_0(x))^2\big)v_0^2(x) \nonumber
  \\
              &\qquad \qquad\qquad
                -\frac{|x|}{u_0(x)}v_1(x)^2+\frac{2x^2u_0'(x)}{u_0^2(x)}v_0(x)v_1(x)
                -\frac{|x|}{u_0(x)} v_0(x) v_2(x)\Big)\,.     \label{G2}
\end{align}
Moreover, let us write
\begin{equation}
  \label{ballepi}
X_{\ep_i} :=\{v_i\in L^{\infty}(\T):\, v_i(x)=(-1)^iv_i(-x)\,,\ 
\|v_i\|_{L^{\infty}(\T)}\leq\ep_i\}
\end{equation}
and let $\de_i=\|F_i\|_{L^{\infty}(\T)}$ for $i=0,1,2$ be the constants
of Lemmas~\ref{L.F0} and~\ref{L.F1F2}. 
Notice that by Lemmas~\ref{L.normT0},~\ref{L.normT1}
and~\ref{L.normT2}, the three norms
$\|T_i\|=C_B< 1$ so that we can find
conditions on the numbers $\ep_i$ that allow us to apply the
Banach fixed point theorem for the operators $ G_i$
and then show the existence of a solution
$u$ to~\eqref{reducedW} of the form~\eqref{perturbations}.

For this, we will need explicit control of the following numerical constants:
\begin{lemma}\label{L.alfas}
  Let
\begin{align*}
&\al_0:=\sup_{x\in \T}\Big|\frac{x}{2u_0(x)}\Big|\,,\quad
                 \al_1:=\sup_{x\in\T}\Big|\frac{x^2u_0'(x)}{2u_0^2(x)}\Big|
\,,\quad   \al_f:=\sup_{x\in\T}\Big| \frac{x}{2u_0(x)} f(x) \Big|\,,\\
&\al_2:=\sup_{x\in\T}\Big|\frac{x^3}{2(u_0(x))^2}\Big(u_0''(x)
-\frac{2(u_0'(x))^2}{u_0(x)}\Big)\Big|\,,\quad
\bar
                                                                        \al_2:=\sup_{x\in\T}\Big|\frac{2x^2u_0'(x)}{(u_0(x))^2}\Big|\,.
\end{align*}
Then, the values of these constants are
\[
  \al_0\leq 2.696\,,\al_1\leq 0.32\,,\al_2\leq 1.382\,, \bar\al_2\leq 1.280
  \,,\al_f\leq 0.448\,.
 \]
\end{lemma}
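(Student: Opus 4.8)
## Proof proposal for Lemma~\ref{L.alfas}

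The plan is to reduce each supremum to an estimate near~$x=0$, where the singular behaviour $u_0(x)\sim\la\sqrt{x}$ is controlled by the asymptotic expansions of Lemma~\ref{L.estu0}, together with a finite computer-assisted evaluation on the remaining compact range $x\in[\ep,\pi]$ (with $\ep=0.1$ as in Lemma~\ref{L.upperbounds}). The five quantities all have the same structure: a ratio of (products of) $u_0$, $u_0'$, $u_0''$ against powers of $u_0$, multiplied by a power of $x$ chosen precisely so that the ratio stays bounded as $x\to 0$. So the first step is to record, for each $\al$, the leading power of~$x$ and verify cancellation of the singularity.

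First I would treat $\al_0$. Writing $\tfrac{x}{2u_0(x)}=\tfrac{\sqrt{x}}{2\la}\,(1+\hat u_0(x))$ via~\eqref{hatu0}, Lemma~\ref{L.hatu0} gives $|\hat u_0(x)|\le c_{\ep,\hat u_0}x^{p_0}$ on $[0,\ep]$, so on that interval $\al_0$ is bounded by $\tfrac{\sqrt{\ep}}{2\la}(1+c_{\ep,\hat u_0}\ep^{p_0})$, a concrete small number; on $[\ep,\pi]$ one evaluates $x/(2u_0(x))$ directly by interval arithmetic using the closed form~\eqref{u0}, and takes the maximum of the two bounds. The remaining four constants are handled the same way, using that near $0$
\begin{align*}
\frac{x^2 u_0'(x)}{2u_0^2(x)} &\sim \frac{x^2\cdot\tfrac{\la}{2\sqrt{x}}}{2\la^2 x}=\frac{\sqrt{x}}{4\la}\,,\qquad
\frac{2x^2u_0'(x)}{u_0^2(x)} \sim \frac{\sqrt{x}}{\la}\,,
\end{align*}
so $\al_1$ and $\bar\al_2$ are again $O(\sqrt{x})$ near $0$; for $\al_2$ and $\al_f$ one uses the combination $2(u_0')^2-u_0u_0''$, whose leading $x^{-1}$ singularity is exactly what makes $\tfrac{x^3}{2u_0^2}\big(u_0''-\tfrac{2(u_0')^2}{u_0}\big)=O(\sqrt{x})$ and $\tfrac{x}{2u_0}f(x)=O(\sqrt{x})$ by~\eqref{fT2} (here one also needs the uniform control of $K_2(x,0)$, $\bar K_2(x,0)$ and the bound~\eqref{L.upperbound2} from Lemma~\ref{L.upperbounds}). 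In each case the near-$0$ bound is obtained by plugging the error estimates~\eqref{erroru0}, \eqref{erroru0'} of Lemma~\ref{L.estu0} and Lemma~\ref{L.upperbounds} into the asymptotic expansion and evaluating at $x=\ep$, exactly as in the proof of Lemma~\ref{L.upperbounds}; the tail $[\ep,\pi]$ is done numerically with interval arithmetic as in Appendix~\ref{S.AppendixB}.

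The main obstacle is not any single estimate but the bookkeeping: $u_0$ is a sum of $20$ terms~\eqref{u0}, so both the symbolic verification that the $x\to 0$ singularities cancel and the interval-arithmetic evaluation on $[\ep,\pi]$ must be carried out with care to keep the enclosures tight enough that the stated numerical values ($\al_0\le 2.696$, $\al_1\le 0.32$, $\al_2\le 1.382$, $\bar\al_2\le 1.280$, $\al_f\le 0.448$) actually come out. In particular $\al_0$ is close to $2.7$ and feeds into $\be$ and the smallness condition $\de_0\le\tfrac{1}{4\al_0\be^2}$ of Proposition~\ref{P.ep0}, so the enclosure of $u_0$ from below on $[\ep,\pi]$ needs to be sharp; the supremum there is presumably attained near $x=\pi$ where $u_0$ is largest and the weight $x$ is largest, which one confirms by a rigorous scan. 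The rest is routine; we defer the detailed computer-assisted verification to Appendix~\ref{S.AppendixB}.
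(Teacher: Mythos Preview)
Your proposal is correct and follows essentially the same approach as the paper: split $[0,\pi]$ at a small $\varepsilon$, handle $[0,\varepsilon]$ via the asymptotic expansions of Lemma~\ref{L.estu0} and the bounds of Lemma~\ref{L.upperbounds} (exactly the cancellations you identify), and treat $[\varepsilon,\pi]$ by rigorous interval arithmetic as in Appendix~\ref{S.AppendixB}. The only minor discrepancy is the choice of splitting point---the paper uses $\varepsilon=10^{-2}$ for these particular constants in Appendix~\ref{S.AppendixB} rather than $\varepsilon=0.1$---but this is a tuning detail, not a difference in method.
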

\begin{proof}
  As for the rest of quantities that we estimate through the paper,
  the bounds near $x=0$ follow from the asymptotic analysis carried
  out in Section~\ref{S.approx.sol.} and particularly from
  Lemma~\ref{L.upperbounds}. Away from zero the proof is detailed in
  Appendix~\ref{S.AppendixB}.
  
\end{proof}

Using in addition that the operator $T_1$ (rep. $T_2$) maps odd
(resp. even) functions into odd (resp. even) functions (as it is clear
from~\eqref{T1} and~\eqref{T2}),
we have the next result analogous
to Proposition~\ref{P.ep0}:
\begin{prop}\label{P.ep1ep2}
Let $u_0$ be an approximate solution of
\eqref{reducedW}
and let $\al_0,\al_1,\al_2,\bar \al_2,\al_f$ be as before.
Assume also that the original perturbation $v_0\in X_{\ep_0}$ for a
small $\ep_0$ as in Proposition~\ref{P.ep0}
and take small constants $\ep_1\,,\ep_2$ such that
\begin{align*}
\frac{
 \be (\de_1+\al_1\ep_0^2)}{1-2\al_0\be\ep_0}&\leq \ep_1\leq 3.77\cdot 10^{-4} \,,\\
\frac{
  \be (\de_2+\al_f\ep_1 +\al_2\ep_0^2+2\al_0\ep_1^2+
  \bar\al_2\ep_0\ep_1)}{1-2\al_0\be\ep_0}&\leq \ep_2\leq 7.46\cdot 10^{-2}\,.
\end{align*}
Then, for $i=1,2$ it holds that:
\begin{itemize}
  \item[(1)] $ G_i(X_{\ep_i})\subseteq X_{\ep_i}$
  \item[(2)] $\|  G_i v_i- G_i w_i\|_{L^{\infty}(\T)}\leq
    k_i \|v_i-w_i\|_{L^{\infty}(\T)}$ with $k_i<1$
for all $v_i\,,w_i$ in $X_{\ep_i}$.
\end{itemize}
\end{prop}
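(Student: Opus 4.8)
The plan is to mimic almost verbatim the proof of Proposition~\ref{P.ep0}, now run in parallel for the two maps $G_1$ and $G_2$, using that $\|T_i\|=C_B$ (Lemmas~\ref{L.normT1} and~\ref{L.normT2}) so that $\|(I-T_i)^{-1}\|\le\be$ by a Neumann series, and that $T_1$ (resp.\ $T_2$) preserves oddness (resp.\ evenness), which is exactly what is needed to land in the symmetry class defining $X_{\ep_1}$ (resp.\ $X_{\ep_2}$). The estimates of Lemma~\ref{L.alfas} provide the explicit constants $\al_1,\al_2,\bar\al_2,\al_f$ that control every nonlinear term appearing on the right-hand sides of~\eqref{G1} and~\eqref{G2}; together with $\de_1,\de_2$ from Lemma~\ref{L.F1F2} and the parameter $\be$ from~\eqref{beta} they are all that enters the two displayed hypotheses on $\ep_1,\ep_2$.

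First I would treat $G_1$. For $v_1\in X_{\ep_1}$ and $v_0\in X_{\ep_0}$ fixed, bounding each term of~\eqref{G1} in $L^\infty(\T)$ gives
\[
\|G_1 v_1\|_{L^\infty(\T)}\le \be\big(\de_1+\al_1\ep_0^2+2\al_0\ep_0\,\|v_1\|_{L^\infty(\T)}\big)\le \be\big(\de_1+\al_1\ep_0^2\big)+2\al_0\be\ep_0\,\ep_1\,,
\]
so $G_1(X_{\ep_1})\subseteq X_{\ep_1}$ is equivalent to $\be(\de_1+\al_1\ep_0^2)\le(1-2\al_0\be\ep_0)\ep_1$, which is precisely the lower bound imposed on $\ep_1$ (note $1-2\al_0\be\ep_0>0$ since $\ep_0\le(2\al_0\be)^{-1}$ by Proposition~\ref{P.ep0}). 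For the contraction, subtracting and using $v_0^2$-cancellation,
\[
\|G_1 v_1-G_1 w_1\|_{L^\infty(\T)}\le \be\,\Big\|\frac{|x|}{u_0}v_0\Big\|_{L^\infty(\T)}\|v_1-w_1\|_{L^\infty(\T)}\le 2\al_0\be\ep_0\,\|v_1-w_1\|_{L^\infty(\T)}\,,
\]
and $k_1:=2\al_0\be\ep_0<1$ again by the bound $\ep_0\le(2\al_0\be)^{-1}$; in fact with $\al_0\le2.696$, $\be=379.017\ldots$ and $\ep_0$ of size $\sim 10^{-7}$ one gets $k_1$ far below~$1$, and one checks numerically (interval arithmetic) that the stated upper bound $\ep_1\le 3.77\cdot10^{-4}$ is compatible with the lower bound, i.e.\ $\be(\de_1+\al_1\ep_0^2)/(1-2\al_0\be\ep_0)\le 3.77\cdot10^{-4}$, using $\de_1\le 9.2\cdot10^{-7}$ and $\al_1\le0.32$. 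Banach's fixed point theorem on the complete metric space $X_{\ep_1}$ then yields~(1) and~(2) for $i=1$, and in particular a unique $v_1\in X_{\ep_1}$ solving~\eqref{v1}.

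Then I would do the same for $G_2$, now with $v_0\in X_{\ep_0}$ and the just-constructed $v_1\in X_{\ep_1}$ frozen. Estimating the six terms of~\eqref{G2} term by term with the constants of Lemma~\ref{L.alfas},
\[
\|G_2 v_2\|_{L^\infty(\T)}\le \be\big(\de_2+\al_f\ep_1+\al_2\ep_0^2+2\al_0\ep_1^2+\bar\al_2\ep_0\ep_1\big)+2\al_0\be\ep_0\,\ep_2\,,
\]
so $G_2(X_{\ep_2})\subseteq X_{\ep_2}$ is equivalent to the stated lower bound on $\ep_2$, and the contraction constant is again $k_2:=2\al_0\be\ep_0<1$ because in the difference $G_2v_2-G_2w_2$ only the last term $-\tfrac{|x|}{u_0}v_0 v_2$ survives (all others depend solely on the fixed $v_0,v_1$). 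One verifies with interval arithmetic that $\de_2\le1.2\cdot10^{-5}$, $\al_f\le0.448$, $\al_2\le1.382$, $\bar\al_2\le1.280$, $\ep_0\sim10^{-7}$, $\ep_1\le3.77\cdot10^{-4}$ make the right-hand side below the claimed ceiling $7.46\cdot10^{-2}$, so the interval for $\ep_2$ is nonempty. Banach's theorem on $X_{\ep_2}$ then gives~(1) and~(2) for $i=2$, completing the proof.

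The only genuinely delicate point—the rest being the same soft bookkeeping as in Proposition~\ref{P.ep0}—is checking that the admissible intervals for $\ep_1$ and $\ep_2$ are nonempty, i.e.\ that the explicit numerical inequalities $\be(\de_1+\al_1\ep_0^2)\le(1-2\al_0\be\ep_0)\cdot3.77\cdot10^{-4}$ and $\be(\de_2+\al_f\ep_1+\al_2\ep_0^2+2\al_0\ep_1^2+\bar\al_2\ep_0\ep_1)\le(1-2\al_0\be\ep_0)\cdot7.46\cdot10^{-2}$ actually hold for an admissible choice of $\ep_0$; this is a finite computation with the constants already produced in Lemmas~\ref{L.F0},~\ref{L.F1F2} and~\ref{L.alfas}, carried out rigorously with interval arithmetic as detailed in Appendix~\ref{S.AppendixB}.
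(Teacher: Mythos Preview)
Your proof is correct and follows exactly the approach the paper intends: the proposition is stated there as ``analogous to Proposition~\ref{P.ep0}'' with no separate proof, and you have supplied precisely the bookkeeping the paper omits---Neumann series bound $\|(I-T_i)^{-1}\|\le\be$ from Lemmas~\ref{L.normT1}--\ref{L.normT2}, parity preservation to stay in $X_{\ep_i}$, term-by-term $L^\infty$ estimates via Lemma~\ref{L.alfas}, and the observation that the contraction constant is $k_i=2\al_0\be\ep_0<1$ because only the $-\tfrac{|x|}{u_0}v_0v_i$ term depends on $v_i$. One minor numerical slip: $\ep_0$ is of order $10^{-5}$ (its admissible lower endpoint is roughly $3.5\cdot10^{-5}$), not $10^{-7}$; this does not affect the argument since $2\al_0\be\ep_0<1$ still holds comfortably, and indeed with $\ep_0$ near that lower endpoint the bound $\be(\de_1+\al_1\ep_0^2)/(1-2\al_0\be\ep_0)\approx 3.76\cdot10^{-4}$ just fits under the stated ceiling $3.77\cdot10^{-4}$.
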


Now we are ready to prove the main result of the paper.
In the following, we use the fact that there exists a negative constant
$c$ such that $u_0''(x)<c/|x|<0$ everywhere.
Since $\ep_2$ is sufficiently small, the second derivative of the solution
$u$ of \eqref{reducedW} is then also signed. More precisely:
\begin{lemma}\label{L.convex}
  Let $\ep_2$ be the smallest
  radius of a ball in $L^\infty(\T)$ for which there
  exists a function $v_2$ solution to~\eqref{v2}.
  Then,
  \begin{equation}\label{u''<0}
u''(x)\leq u_0''(x)+\frac{\ep_2}{|x|}<0
\end{equation}
for $x\in [-\pi,\pi]$.
\end{lemma}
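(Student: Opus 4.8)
The plan is to obtain the first inequality in~\eqref{u''<0} directly from the perturbative ansatz~\eqref{perturbations}, and then to reduce the claimed strict negativity of the right-hand side to a uniform negative upper bound for $|x|\,u_0''(x)$, which is in turn supplied by the asymptotic analysis of Section~\ref{S.approx.sol.} near the origin and by interval arithmetic away from it.

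First I would use~\eqref{perturbations}: the exact solution $u$ of the reduced Whitham equation under consideration satisfies $u''(x)=u_0''(x)+v_2(x)/|x|$, where $v_2$ solves~\eqref{v2} and is the fixed point furnished by Proposition~\ref{P.ep1ep2}. By the very definition of $\ep_2$ one has $\|v_2\|_{L^\infty(\T)}\le\ep_2$, and since $u$ is smooth away from the crest (so that $v_2(x)=|x|\bigl(u''(x)-u_0''(x)\bigr)$ is continuous on $[-\pi,\pi]\setminus\{0\}$), this $L^\infty$ bound upgrades to the pointwise bound $v_2(x)\le\ep_2$ for every $x\neq 0$. Dividing by $|x|$ and adding $u_0''(x)$ gives
\[
u''(x)\ \le\ u_0''(x)+\frac{\ep_2}{|x|}\,,\qquad x\in[-\pi,\pi]\setminus\{0\}\,.
\]

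It then remains to show that $u_0''(x)+\ep_2/|x|<0$ on $[-\pi,\pi]$, i.e. that $|x|\,u_0''(x)<-\ep_2$. Near the origin this is automatic: once the normalization $a_{00}=\tfrac14$ is imposed, the leading singular term of $u_0''$ at $x=0$ is $-\tfrac{\la}{4}\,|x|^{-3/2}$ by~\eqref{asympu2}, so $|x|\,u_0''(x)=-\tfrac{\la}{4}|x|^{-1/2}+O\!\bigl(|x|^{-1/2+p_0}\bigr)\to-\infty$, and by the monotone error estimates of Lemma~\ref{L.estu0} one obtains $|x|\,u_0''(x)<c$ on $0<|x|\le\ep$ for a fixed negative number $c$. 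On the compact remainder $\ep\le|x|\le\pi$ the inequality $|x|\,u_0''(x)<c<0$ is certified by rigorous interval-arithmetic evaluation of the series~\eqref{u0''}, exactly as for the other pointwise bounds collected in Appendix~\ref{S.AppendixB}; this is precisely the assertion quoted before the lemma that there is a negative constant $c$ with $u_0''(x)<c/|x|$ everywhere. Since the fixed-point construction forces $\ep_2\le 7.46\cdot10^{-2}$, which the computer-assisted estimates show to be strictly smaller than $-c$, we conclude $|x|\,u_0''(x)<c<-\ep_2$, and hence $u''(x)\le u_0''(x)+\ep_2/|x|<0$ on all of $[-\pi,\pi]$, which is~\eqref{u''<0}.

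The only real obstacle is the uniform negativity $|x|\,u_0''(x)<c<0$ on $[\ep,\pi]$: near $x=0$ the sign is forced by the explicit $-\tfrac{\la}{4}|x|^{-3/2}$ singularity with its negative coefficient, but away from the origin the approximate solution~\eqref{u0} is a combination of twenty Clausen and trigonometric terms of competing signs, so one must rely on the verified enclosures of Appendix~\ref{S.AppendixB} to rule out a cancellation that would push $|x|\,u_0''$ all the way up to $0$. Everything else is routine bookkeeping.
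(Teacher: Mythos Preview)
Your proposal is correct and follows essentially the same approach as the paper: the first inequality in~\eqref{u''<0} comes directly from the perturbative ansatz~\eqref{perturbations} together with $\|v_2\|_{L^\infty}\le\ep_2$, while the strict negativity of $u_0''(x)+\ep_2/|x|$ is handled by the asymptotic expansion~\eqref{asympu2} near $x=0$ (where the leading $-\tfrac{\lambda}{4}|x|^{-3/2}$ term dominates) and by the interval-arithmetic enclosures of Appendix~\ref{S.AppendixB} on the compact remainder. The paper's own proof is only two sentences and points to exactly these two ingredients; you have simply made explicit the bookkeeping that the paper leaves implicit.
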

\begin{proof}
  Sufficiently close to $x=0$ the proof follows by the asymptotic
  formula~\eqref{u0''} and the bound for $\ep_2$ in
  Proposition~\ref{P.ep1ep2}.
  For $x$ bounded away from zero the proof is
  done as explained in Appendix~\ref{S.AppendixB}.
\end{proof}

Finally, we prove the convexity of the highest cusped Whitham wave:
\begin{proof}[\bf{Proof of Theorem \ref{mainthm}:}]
  From Propositions~\ref{P.ep0} and~\ref{P.ep1ep2}
  and the Banach Fixed point theorem,
  it follows that there exists an unique triplet of functions
  $v:=(v_0,v_1,v_2)\in X_{\ep_0} \times X_{\ep_1} \times X_{\ep_2}$
  related to the solution $u$ of the Whitham equation through
  \eqref{perturbations}.
  Moreover, let us take $\eta>0$
  and consider now the system
\begin{align*}
  \begin{cases}
u_\eta(x)&=u_0(x)+|x|^{1+\eta}\, v_{\eta,0}(x)\,,\\
u_\eta'(x)&=u_0'(x)+|x|^{\eta}\,v_{\eta,1}(x)\,,\\
u_\eta''(x)&=u_0''(x)+|x|^{\eta-1}\,v_{\eta,2}(x)\,,
\end{cases}
\end{align*}
where $u_\eta$ denotes a solution of the reduced Whitham
equation~\eqref{reducedW} and
$$v_\eta:=(v_{\eta,0}, v_{\eta,1}, v_{\eta,2})
\in X_{\ep_{\eta,0}} \times X_{\ep_{\eta,1}} \times X_{\ep_{\eta,2}}$$
is a small bounded perturbation with $X_{\ep_{\eta,i}}$
as in~\eqref{ballepi}.

Using dominated convergence we can argue that the corresponding
estimates for the new maps analogous to $T_i$ and $G_i$
are continuous in the parameter $\eta$,
so that the fixed point argument remains true with slightly
different constants when we take $\eta>0$ arbitrarily small.
In particular,
the estimate~\eqref{u''<0} implies that
  \begin{equation}
u_\eta''(x)<0\,,\quad \eta> 0\,.
  \end{equation}
In this manner, in order to conclude we need to check that $u_\eta(x)$
is the unique solution to Whitham's equation~\eqref{reducedW} of the
form $u_0(x)+|x|^{1+\eta}\, v_{\eta,0}(x)$, but this is clear in view
of the uniqueness of the fixed point $v_\eta$.
\end{proof}

\appendix
\section{The norm of $T_2$}\label{S.AppendixA}
This Appendix is devoted to the computation of the norm of the
operator $T_2$ given in~\eqref{T2}.
Like in the cases of $T_0$ and $T_1$,
using the asymptotic analysis carried out in Section~\ref{S.approx.sol.}
we show that $\|T_2\|$ is precisely the constant $C_B$.

 \begin{proof}[Proof of Lemma~\ref{L.normT2}]
  Arguing as in Lemma~\ref{L.normT1},
  let us take $x,y>0$ and let $\ep$ be  a small positive number.
  In this way, notice that the integrand of \eqref{T2}
  can be expressed as
   \begin{multline*}
     \frac{2}{\pi}\sum_{n=1}^\infty m(n) \Big( \cos(n x)
     -2\frac{u_0'(x)}{n u_0(x)}\sin(n x)\\
     +\frac{1}{n^2(u_0(x))^2}\big(2(u'_0(x))^2-u_0(x)u_0''(x)\big)
     \big(1-\cos(nx)\big)
     \Big) \frac{\big(\cos(n y)-\chi(x,y)\big)}{y}\\
     =\frac{1}{\pi y}\Big( C_{\frac12}(x-y) + C_{\frac12}(x+y)-2 \chi(x,y) C_{\frac12}(x)
     -2\frac{u_0'(x)}{u_0(x)}\big(S_{\frac32}(x-y)+S_{\frac32}(x+y)-2 \chi(x,y) S_{\frac32}(x)\big)\\
     -\frac{1}{(u_0(x))^2}\big(2(u'_0(x))^2-u_0(x)u_0''(x)\big)
     \big(C_{\frac52}(x-y)+C_{\frac52}(x+y)-2 C_{\frac52}(y)-2 \chi(x,y) (C_{\frac52}(x)-\ze(5/2))\big)
     \Big)\\
     +\frac{2}{\pi}\sum_{n=1}^\infty \frac{\big(\sqrt{\tanh(n)}-1\big)}{\sqrt{n}}\Big( \cos(n x)
     -2\frac{u_0'(x)}{n u_0(x)}\sin(n x)\\
     +\frac{1}{n^2(u_0(x))^2}\big(2(u'_0(x))^2-u_0(x)u_0''(x)\big)
     \big(1-\cos(nx)\big)
     \Big) \frac{\big(\cos(n y)-\chi(x,y)\big)}{y}\,.
   \end{multline*}
   Then, the norm of $T_2$ is obtained by taking the supremum in $x\in [0,\pi]$ of the integral
   \begin{multline*}
     \frac{\sqrt{x}}{\pi}(1+\hat u_0(x))\int_{0}^{\pi}
     \Big| \frac{1}{ \sqrt{|x-y|}}+\frac{1}{ \sqrt{x+y}}
     -\frac{2}{x}\big(\sqrt{x+y}+\sign(x-y)\sqrt{|x-y|}\big)\\
     +\frac{1}{x^2}\big((x+y)^{3/2}+|x-y|^{3/2}-2y^{3/2}\big)\\
     +4\Big( \frac{1}{2x}-\frac{u_0'(x)}{u_0(x)}\Big)\big(
     \sqrt{x+y}+\sign(x-y)\sqrt{|x-y|} -2 \chi(x,y)\sqrt{x}\big)\\
    -\frac43\Big(
    \frac{3}{4x^2}-\frac{1}{(u_0(x))^2}\big(2(u'_0(x))^2-u_0(x)u_0''(x)\big)
    \Big) \big((x+y)^{3/2}+|x-y|^{3/2}-2y^{3/2}-2 \chi(x,y) x^{3/2}\big)\\
    +\sqrt{\frac{2}{\pi}}\Big( E_{C_\frac12}(x-y)+E_{C_\frac12}(x+y)
    -2 \chi(x,y) E_{C_\frac12}(x)\\
    -\frac{2u_0'(x)}{u_0(x)}\big( E_{S_\frac32}(x-y)+E_{S_\frac32}(x+y)
    -2 \chi(x,y) E_{S_\frac32}(x) \big)\\
    -\frac{1}{(u_0(x))^2}\big(2(u'_0(x))^2-u_0(x)u_0''(x)\big)
    \big( E_{C_\frac52}(x-y)+E_{C_\frac52}(x+y)
    -2 E_{C_\frac52}(y)-2 \chi(x,y) E_{C_\frac52}(x)\big)\\
     +2\sum_{m=1}^\infty
   \frac{1}{\sqrt{n}}\big(1-\sqrt{\tanh(n)}\big) \Big( \cos(n x)
     -2\frac{u_0'(x)}{n u_0(x)}\sin(n x)\\
     +\frac{1}{n^2(u_0(x))^2}\big(2(u'_0(x))^2-u_0(x)u_0''(x)\big)
     \big(1-\cos(nx)\big)
     \Big)\big(\chi(x,y)-\cos(n y)\big)
    \Big)\Big|\,\frac{dy}{y}
    \,.
  \end{multline*}
As before, we carry out the analysis of the norm by dividing the above
integral into two pieces: the regions $x<y<\pi$, in which
$\chi(x,y)=0$ so the integrand is
positive by Lemma~\ref{L.signT}, and $0<y<x$.

Notice that the integral on $x<y<\pi$ results
\begin{multline*}
  \frac{x}{\pi u_0(x)}\sum_{n=1}^\infty m(n) \Big( \cos(n x)
     -2\frac{u_0'(x)}{n u_0(x)}\sin(n x)\\
     +\frac{1}{n^2(u_0(x))^2}\big(2(u'_0(x))^2-u_0(x)u_0''(x)\big) \big(1-\cos(nx)\big)\Big)
     \big( \text{Ci}(n\pi)-\text{Ci}(n x)\big)\,,
   \end{multline*}
where  $\operatorname{Ci}(n \pi)$ denotes
the cosine integral function
(see e.g. \cite [Chapter 6]{DLMF} for the definition
of $\text{Ci}(x)$ and its properties).
In this region,
we will use the integral estimates
\begin{multline*}
4\Big( \frac{1}{2x}-\frac{u_0'(x)}{u_0(x)}\Big)\int_{x}^{\pi}\big(\sqrt{x+y}-\sqrt{y-x}\big)\,\frac{dy}{y}\\
-\frac43\Big(
\frac{3}{4x^2}-\frac{1}{(u_0(x))^2}\big(2(u'_0(x))^2-u_0(x)u_0''(x)\big)\Big)
\int_{x}^{\pi}\big((x+y)^{3/2}+(y-x)^{3/2}\big)\,\frac{dy}{y}\\
\leq
 \frac{\pi \hat c_{p_0}'}{\la} x^{p_0-1/2}\,,
\end{multline*}
and
\begin{multline*}
-\frac 2 \pi \sum_{n=1}^\infty
    \frac{1}{\sqrt{n}}\big(1-\sqrt{\tanh(n)}\big) \Big( \cos(n x)
     -2\frac{u_0'(x)}{n u_0(x)}\sin(n x)\\
     +\frac{1}{n^2(u_0(x))^2}\big(2(u'_0(x))^2-u_0(x)u_0''(x)\big)
     \big(1-\cos(nx)\big)
     \Big)\int_x^\pi \frac{\cos(n y)}{y}\, dy\\
     \leq \frac{3}{4\pi} \sum_{n=1}^\infty\frac{1-\sqrt{\tanh(n)}}{ \sqrt{n}}
     \big(\log (x)+\log (n) +\ga-\text{Ci}(n \pi )\big)
     +2\pi (c^1_{p_0}-\log(x) c^2_{p_0})x^{p_0}\\
     \leq
     -\frac{1}{12}\log(\pi)\ze(1/2)+
     \frac{3}{4\pi}
     \sum_{n=1}^\infty\frac{1-\sqrt{\tanh(n)}}{\sqrt{n}}\log(x)
     +2\pi(c^1_{p_0}-\log(x) c^2_{p_0})x^{p_0}\,,
\end{multline*}
which hold for small constants $\hat c_{p_0}'\,,c_{p_0}^1\,,c_{p_0}^2$
that only depend on the bounds obtained in Lemma~\ref{L.upperbounds},
and where in the last estimate we have used that
\[
\int_x^\pi \frac{\cos(n y)}{y}\, dy=\text{Ci} (n \pi )-\text{Ci}(n
x)\geq \text{Ci}(n \pi ) -\ga-\log (n) -\log (x)\,,
\]
and also the numerical inequality
\begin{equation}\label{ineqcosint}
\sum _{n=1}^{\infty} \frac{\sqrt{\tanh (n)}-1}{\sqrt{n}} (\text{Ci}(n \pi )-\log (n)-\gamma )+\frac{ \log (\pi )}{9 \pi}  \ze(1/2)<0\,,
 \end{equation}
 (with $\ga$ the Euler constant) that we check with the computer.
   
Since in addition
   \begin{multline*}
     \int_x^\pi
     \Big( E_{C_\frac12}(x-y)+E_{C_\frac12}(x+y)
    -\frac{2u_0'(x)}{u_0(x)}\big( E_{S_\frac32}(x-y)+E_{S_\frac32}(x+y) \big)\\
    -\frac{1}{(u_0(x))^2}\big(2(u'_0(x))^2-u_0(x)u_0''(x)\big)
    \big( E_{C_\frac52}(x-y)+E_{C_\frac52}(x+y)
    -2 E_{C_\frac52}(y)\big)\,\frac{dy}{y}\\
    \leq
    \frac{3}{4} \zeta (1/2)\log \Big(\frac{\pi }{x}\Big)+2\pi c_{T_2}^1x^2\,,
   \end{multline*}
putting together the above estimates we arrive at
\begin{multline}
\frac{x}{2 u_0(x)}\int_{x}^{\pi} \Big(
K(x-y)+K(x+y)+\frac{2
  u_0'(x)}{u_0(x)} K_2(x,y)\\
+\frac{1}{(u_0(x))^2}\big(2(u'_0(x))^2-u_0(x)u_0''(x)
\big)\bar K_2(x,y) \Big)\, \frac{dy}{y}\\
\leq
c_B^1-\frac{ 1}{\la}c''_{\frac12}\sqrt{x}(1+\hat u_0(x))+E_{T_2}^1(x)
\,,
\end{multline}
where
\begin{multline}
  c''_{\frac12}=-\frac{1}{3\pi}\log(\pi)\ze(1/2)\,,\\
  |E_{T_2}^1(x)|\leq c_B^1\hat u_0(x)+
 \frac{3}{8\pi^2\la }\log(x)\Big(
 \sum_{n=1}^\infty\frac{1-\sqrt{\tanh(n)}}{\sqrt{n}}-\pi \ze(1/2)
 \Big) \sqrt{x}(1+\hat u_0(x))\\
 + \frac{1}{\la} \hat c_{p_0}'x^{p_0}(1+\hat u_0(x))
 +\frac{1}{\la}  (c^1_{p_0}-\log(x) c^2_{p_0})x^{p_0+1/2}(1+\hat u_0(x))
    +\frac{c^1_{T_2}}{\la} x^{5/2}(1+\hat u_0(x))\\
=c_B^1\hat u_0(x)+\frac{1}{\la}\tilde c_{1/2}\log(x)\sqrt{x}(1+\hat u_0(x))
\\+\frac{1}{\la}\Big( \hat c_{p_0}'+(c^1_{p_0}-\log(x)
c^2_{p_0})\sqrt{x}\Big) x^{p_0}(1+\hat u_0(x))
    +\frac{c^1_{T_2}}{\la} x^{5/2}(1+\hat u_0(x))
\end{multline}

On the other hand, by analogous estimates we find that
\begin{multline}
\frac{x}{2 u_0(x)}\int_{0}^{x} \Big|
K(x-y)+K(x+y)+\frac{2
  u_0'(x)}{u_0(x)} K_2(x,y)\\
+\frac{1}{(u_0(x))^2}\big(2(u'_0(x))^2-u_0(x)u_0''(x)
\big)\bar K_2(x,y) \Big|\, \frac{dy}{y}\\
\leq c_B^2 +E_{T_2}^2(x)\,,
\end{multline}
with
\begin{equation}
 |E_{T_2}^2(x)|\leq c_B^2\hat u_0(x)+\frac{1}{\la}\hat c_{p_0}''x^{p_0}(1+\hat u_0(x))+\frac{c_{T_2}^2}{\la}x^{5/2}(1+\hat u_0(x))\,.
\end{equation}
Then, since for sufficiently small $\ep$
\begin{equation}\label{ineqT2}
  \Big(  C_B c_{p_0} +\hat c_{p_0}'+\hat c_{p_0}'' + (c^1_{p_0}-\log(\ep)
c^2_{p_0})\sqrt{\ep}\Big) \ep^{p_0-1/2}+(c_{T_2}^1+c_{T_2}^2)\ep^{2-p_0}
<c_{\frac12}''-\tilde c_{\frac12}\log(\ep)\,,
\end{equation}
the proof follows in the same manner as in
Lemmas~\ref{L.normT0} and~\ref{L.normT1}.
 \end{proof}

\section{Technical details concerning the computer assisted part}\label{S.AppendixB}
In this section we discuss the technical details about the
implementation of the different numerical computations such as the
integrals that appear in the proofs along the paper.
We remark that we are computing explicit (but complicated) functions
over a one dimensional domain.
In order to perform the rigorous computations we used the Arb library
\cite{Johansson:Arb}; the code can be found in the supplementary material.

The implementation is split into several files, and many of the headers of the functions (such as the integration methods) contain pointers to functions (the integrands) so that they can be reused for an arbitrary number of integrals with minimal changes and easy and safe debugging.
We first describe the data structures that will appear in the different parts of the code and later get to the specific algorithms of each Lemma.

There is a basic class that encloses all the necessary information
used throughout the computations in
Lemmas~\ref{L.F0},~\ref{L.F1F2},~\ref{L.normT0},~\ref{L.normT1} and~\ref{L.normT2}.
It is called
\texttt{Integration\_params\_struct} and has the following members:
three integers, \texttt{N\_0}, \texttt{N\_1} and \texttt{N\_2}; three
vectors of intervals \texttt{a0k}, \texttt{a1k} and \texttt{bi} of
sizes $N_0, N_1$ and $N_2$ respectively, containing the coefficients
that describe the approximate solution $u_0$ of~\eqref{u0}.
There is also an interval called \texttt{x}, which is used only in
Lemmas~\ref{L.normT0},~\ref{L.normT1} and~\ref{L.normT2},
indicating the value of $x$ used for the integration.

We had to implement the Clausen functions, since they are not part of the Arb library. A naive implementation of $C_{z}(x)$ (resp. $S_{z}(x)$) would be to evaluate the real (resp. imaginary) parts of $\Li_{z}(e^{ix})$. When $x$ is an interval, this gives a disastrous error. Instead, we will make use of the following Lemma:

\begin{lemma}
  Let $z$ be a non-integer fixed real number. Then, the Clausen
  function $C_z(x)$ is strictly monotonic in $x\in(0,\pi]$.
\end{lemma}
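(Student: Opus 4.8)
The plan is to reduce the claimed monotonicity of $C_z$ to a non-vanishing statement for a single Clausen function of sine type, and then to pin down the sign of that function by two different classical representations, according to whether its order is positive or negative. The starting point is the identity $\frac{d}{dx}\Li_z(e^{ix})=i\,\Li_{z-1}(e^{ix})$ (differentiate the defining series for $|e^{ix}|<1$ and continue analytically, or simply differentiate the explicit formula~\eqref{ClausenC}); taking real parts gives
\[
C_z'(x)=-S_{z-1}(x)
\]
for every non-integer $z$ and every $x\in(0,2\pi)$. Since $x\mapsto\Li_z(e^{ix})$ is real-analytic on $(0,2\pi)$, the function $C_z$ is smooth there, so it is enough to show that $S_{z-1}$ does not vanish on the \emph{open} interval $(0,\pi)$: then $C_z'$ keeps a constant sign on $(0,\pi)$, whence $C_z$ is strictly monotonic on $(0,\pi)$, and by continuity this extends to $x=\pi$ (where in fact $C_z'(\pi)=-S_{z-1}(\pi)=-\operatorname{Im}\Li_{z-1}(-1)=0$, which is harmless). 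Writing $w:=z-1$, again a non-integer real, the goal becomes: $S_w$ has constant sign on $(0,\pi)$.

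For $w>0$ I would use the Laplace-type representation coming from $n^{-w}=\Ga(w)^{-1}\int_0^\infty t^{w-1}e^{-nt}\,dt$ and summation of the resulting geometric series,
\[
S_w(x)=\frac{1}{\Ga(w)}\int_0^\infty t^{w-1}\,\frac{e^{-t}\sin x}{1-2e^{-t}\cos x+e^{-2t}}\,dt,
\]
which holds for $\operatorname{Re}w>1$ by Fubini and then for all $\operatorname{Re}w>0$ by analytic continuation in $w$, the integral converging there because the trigonometric factor stays bounded as $t\to0$. For $x\in(0,\pi)$ one has $\sin x>0$ and $1-2e^{-t}\cos x+e^{-2t}=(1-e^{-t})^2+2e^{-t}(1-\cos x)>0$, so $S_w(x)>0$ on $(0,\pi)$. (Alternatively one may invoke the classical positivity of $\sum_{n\ge1}\sin(nx)/n^w$ on $(0,\pi)$.)

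For $w<0$ I would invoke Hurwitz's formula: for $\operatorname{Re}w>1$ and $0<a\le1$ one has $\ze(1-w,a)=\frac{2\Ga(w)}{(2\pi)^w}\big(\cos(\tfrac{\pi w}{2})C_w(2\pi a)+\sin(\tfrac{\pi w}{2})S_w(2\pi a)\big)$, and by analytic continuation in $w$ this persists for all non-integer $w$. Taking $a=\tfrac{x}{2\pi}$ and $a=1-\tfrac{x}{2\pi}$ with $x\in(0,2\pi)$, using $C_w(2\pi-x)=C_w(x)$ and $S_w(2\pi-x)=-S_w(x)$, and subtracting (the cosine terms cancel and the sine terms add) yields
\[
S_w(x)=\frac{(2\pi)^w}{4\,\Ga(w)\sin(\tfrac{\pi w}{2})}\,\Big[\ze\big(1-w,\tfrac{x}{2\pi}\big)-\ze\big(1-w,1-\tfrac{x}{2\pi}\big)\Big].
\]
For non-integer $w<0$ the prefactor is a finite nonzero constant, and since $1-w>1$ the Hurwitz zeta $\ze(1-w,a)=\sum_{n\ge0}(n+a)^{-(1-w)}$ is strictly decreasing in $a\in(0,1]$; as $x\in(0,\pi)$ forces $\tfrac{x}{2\pi}<\tfrac12<1-\tfrac{x}{2\pi}$, the bracket is strictly positive, so $S_w$ has the constant sign of the prefactor on $(0,\pi)$. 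Together with the case $w>0$ this shows $S_w$ never vanishes on $(0,\pi)$ for any non-integer $w$, and the lemma follows.

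The main obstacle is exactly the negative-order regime $w=z-1<0$ (that is, $z<1$), where $S_w$ is defined only by analytic continuation, the naive Laplace integral diverges, and the series conveys no sign information; the Hurwitz-formula manipulation is precisely what converts the problem into a transparent comparison between two values of a monotone Hurwitz zeta function. The only other point requiring care is the bookkeeping of the factors $\sin(\tfrac{\pi w}{2})$, $\cos(\tfrac{\pi w}{2})$, $\Ga(w)$ and $(2\pi)^w$, together with the verification that the prefactor above is finite and nonzero exactly because $w$ is not an integer.
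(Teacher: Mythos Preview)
Your proof is correct and shares the paper's overall architecture: differentiate to get $C_z'(x)=-S_{z-1}(x)$, then split according to the sign of $w=z-1$ and use an explicit representation to read off the sign of $S_w$ on $(0,\pi)$. For $w>0$ your Laplace-type integral is exactly the representation the paper invokes (cited there as \cite[Eq.~25.12.11]{DLMF}), so the two proofs coincide.

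The only genuine difference is the treatment of $w<0$. The paper appeals to the integral representation
\[
S_{z-1}(x)=\sin\!\big(\tfrac{\pi}{2}z\big)\int_0^\infty t^{1-z}\,\frac{\sinh\big(t(\pi-x)\big)}{\sinh(\pi t)}\,dt,
\]
which makes the sign manifest from the positivity of the integrand and the nonvanishing of $\sin(\tfrac{\pi}{2}z)$ for non-integer $z$. You instead invert Hurwitz's formula to write $S_w(x)$ as a nonzero constant times $\ze(1-w,\tfrac{x}{2\pi})-\ze(1-w,1-\tfrac{x}{2\pi})$ and use the strict monotonicity of $a\mapsto\ze(1-w,a)$ for $1-w>1$. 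Both routes are classical and equally short; the paper's integral gives the sign in one glance, while your Hurwitz argument has the small advantage that it avoids checking convergence of a new integral and makes the bookkeeping (the role of the non-integer hypothesis through $\Gamma(w)$ and $\sin(\tfrac{\pi w}{2})$) completely explicit.
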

\begin{proof}
  Notice that $C'_z(x)=-S_{z-1}(x)$ for all $x$ and
  assume first that $z>1$.
  By \cite[Eq.~25.12.11]{DLMF}, we have that
\[
S_{z-1}(x)=\frac{\sin(x) }{\Ga(z-1)} \int_{0}^\infty t^{z-1} \frac{e^t}{\big(e^t-\cos(x)\big)^2+\sin(x)^2}\,dt\,.
\]
Since $\Ga(z-1)>0$ for $z>1$ and $\sin(x)>0$ in $[0,\pi]$,
$C'_z(x)<0$ in that range.

Likewise, when $z<1$ we can use the representation formula
\begin{equation*}
S_{z-1}(x)=\sin(\tfrac{\pi}{2} z)\int _{0}^{\infty }t^{1-z}\frac{\sinh\big(t(\pi-x
  )\big)}{\sinh(\pi t)}\, dt
\end{equation*}
that follows from the well known
relationship between zeta functions and
polylogarithms, cf.~\cite[Eq.~25.11.25]{DLMF}.
\end{proof}

This shows that if $X = [\underline{x},\overline{x}] \subset (0,\pi]$, then $C_{z}(X)$ is contained in the convex hull of $C_{z}(\underline{x})$ and $C_{z}(\overline{x})$. That is exactly how we implement it. We compute $C_{z}$ at the endpoints using the polylogarithm function and we take their convex hull. In order to implement $S_{z}$ (which is not monotonic in $(0,\pi]$) we use that $S_{z}'(x) = C_{z-1}(x)$ and $S_{z}(X) = S_{z}(x_0) + (X-x_0)C_{z-1}(X)$ by virtue of the mean value theorem, choosing $x_0$ as the midpoint of $X$.

It is also important to remark that given the delicate set of calculations that need to be performed, working with double precision is not enough and multiprecision is needed. In all our calculations we worked with 100 bits (as opposed to the usual 53).

\begin{proof}[Proof of Lemma~\ref{L.p0p1}]
We will enclose a solution to~\eqref{exponents} by applying a Newton
method to the difference of the LHS and the RHS of the equation.
We discuss the details of the algorithm below.

The first step of the algorithm is to isolate the roots: this is done
by checking the signs of the endpoints and ensuring that the
derivative of the function has a definite sign between the
endpoints. On the contrary, if the signs of the function at the
endpoints are the same and the function is monotone, there is no root
in that interval and it is discarded. Finally, if none of these two
conditions are met, the interval is split by the midpoint in two and
the isolating function is called recursively with the two resulting
subintervals. The second step is to refine the interval even more
using a bisection method. Finally, a Newton zero-finding method is
applied. The code can be found in the file
\texttt{Lemma\_p0\_p1.c}. The total execution time was a few
seconds.
The initial intervals for $p_0$ and $p_1$ were $[0.5125,0.75]$
and $[2.625,2.875]$, and the final enclosures were
$0.61120158988884395 \pm 7.01\cdot10^{-19}$ and $2.7624011603378232
\pm 2.00\cdot 10^{-17}$,
respectively.

\end{proof}

\begin{proof}[Proof of Lemmas~\ref{L.hatu0},
  ~\ref{L.upperbounds}]
This concerns the proof of the inequalities~\ref{ineqT0}
and~\ref{ineqT1}, and all the Lemmas such as~\ref{L.hatu0}
which involve evaluations at a single point. We refer the
reader to the file \texttt{Constant\_checking.c}.
\end{proof}

\begin{proof}[Proof of Lemmas~\ref{L.F0},~\ref{L.F1F2}~\ref{L.alfas}
  and~\ref{L.convex}]
This describes the bounding of the quantities $\al_0$, $\al_1$, $\al_2$,
$\bar \al_2$, $\al_f$ and $\de_0$, $\de_1$, $\de_2$,
which are all done the same way.

We start splitting $I = [0,\pi]$ into two pieces, $I_1 =
[0,\varepsilon]$ and $I_2 = [\varepsilon,\pi]$, with $\varepsilon =
10^{-2}$. The bounds of the different quantities over $x \in I_1$ were
obtained using asymptotics for small $x$ (see e.g. Lemmas~\ref{L.estu0}
and~\ref{L.estLu0}). In order to deal with the case $x \in I_2$, we constructed a function called \texttt{compute\_bound\_Linfty\_norm\_C1} that takes as arguments a function \texttt{func}, its derivative \texttt{deriv}, a bound \texttt{bound}, an interval \texttt{min\_width} and an interval \texttt{inp} and performs recursively the following branch and bound algorithm: we first compute an enclosure of \texttt{func} (which we call \texttt{F}). The enclosure is a $C^1$ one, given by

\begin{align*}
F(X) = F(x_0) + (X-x_0)F'(X),
\end{align*}

taking $x_0$ as the midpoint of $X$. Given \texttt{F}, the function performs the following algorithm:

\begin{itemize}
\item If $\texttt{F} > \texttt{bound}$ it returns false
\item If $\texttt{F} < \texttt{bound}$ it returns true
\item If none of the two conditions are met:
\begin{itemize}
  \item If $width(\texttt{inp}) < \texttt{min\_width}$, split into two pieces and return true if both true, otherwise false
  \item Else return false
\end{itemize}

\end{itemize}

It is clear that if the algorithm returns true, then \texttt{bound} is
a guaranteed upper bound of $f(x), x \in I_2$.
For all the above quantities, the total time of computation was a few minutes. The code can be found in the file \texttt{Lemma\_bound\_functions.c}.
\end{proof}

\begin{proof}[Proof of Lemmas~\ref{L.normT0},~\ref{L.normT1} and ~\ref{L.normT2}]

We now explain how the integrals are calculated. For simplicity, we
will explain how to calculate $T_0$ but the same method applies to
$T_1$ and $T_2$. First, we split the interval $I = [0,\pi]$ into $I_1
= [0,\varepsilon]$ and $I_2 = [\varepsilon,\pi]$ (we take $\varepsilon
= 0.1$ in all three cases $T_0,T_1,T_2$).
Calling $T_0(x)$ to the function in~\eqref{normT0} whose supremum on
$I$ gives $\|T_0\|$, it is clear that when $x \in I_1$
then $T_0(x)$ is bounded using the asymptotic expansion described in Lemma~\ref{normT0}.

We here explain the calculation when $x \in I_2$.
The first step consists on splitting the integral

\begin{align*}
T_0(x) = \frac{1}{2 x u_0}\int_{0}^{\pi}\big|
 K(x-y)+K(x+y)-2K(y)\big|y\,dy \\
 = \frac{1}{2 x u_0}\int_{0}^{x}\big|
   K(x-y)+K(x+y)-2K(y)\big|y\,dy \\
   +\frac{1}{2 x u_0}\int_{x}^{\pi}\big|
 K(x-y)+K(x+y)-2K(y)\big|y\,dy \\
 = T_{0}^{1}(x) + T_{0}^{2}(x).
\end{align*}

The expression of $T_{0}^{2}(x)$ can be explicitly calculated (see
equation~\eqref{intT00}) so we will focus in the calculation of $T_{0}^{1}(x)$. Changing variables, we will write

\begin{align*}
T_{0}^{1}(x) & = \frac{x}{2 u_0}\int_{0}^{1}\big|
 K(x(1-w))+K(x(1+w))-2K(x w)\big|w\,dw \\
\end{align*}

We should note, however, that the integrand is singular (although integrable) at $w= 0$ and $w= 1$. The next step is to remove those singularities and treat them separately. We thus split $T_{0}^{1}(x)$ as
\begin{align*}
T_{0}^{1}(x)  &= \frac{x}{2 u_0}\int_{0}^{\de_0}\big|
               K(x(1-w))+K(x(1+w))-2K(x w)\big|w\,dw\\
             &  + \frac{x}{2 u_0}\int_{\de_0}^{1-\de_1}\big|
               K(x(1-w))+K(x(1+w))-2K(xw)\big|w\,dw
  \\
  &+ \frac{x}{2 u_0}\int_{1-\de_1}^{1}\big|
 K(x(1-w))+K(x(1+w))-2K(x w)\big|w\,dw \\
&= T_{0}^{1,1}(x) + T_{0}^{1,2}(x) + T_{0}^{1,3}(x)
\end{align*}
with $\de_0=\de_1 = 10^{-6}$ for $T_0,T_1$, and
$\de_0=0.0625, \de_1=10^{-4}$ for $T_2$.
The values of $T_{0}^{1,1}(x)$ and
$T_{0}^{1,3}(x)$ are calculated using asymptotic expansions at $w= 0$
and $w = 1$ respectively.
We remark that the integrand of $T_1^{1}(x)$ (the analog of
$T_0^1(x)$ for the operator $T_1$),
is not singular at $w= 0$ so we do not have to make that splitting of
the singularity. We are left with the calculation of $T_{0}^{1,2}(x)$,
which we pass to explain now for a fixed (interval) $x$.

In this case, the integration is done recursively. For each subdomain, we compute an enclosure of the integral. Since the integrand is not smooth because of the absolute value, we first compute a $C^0$ enclosure (i.e. evaluating the integrand at the full integration region). If the enclosure is sign-definite, the integrand is $C^2$ inside it, so we can improve on the width of the enclosure by performing a midpoint quadrature, given by:

\begin{align*}
\int_{a}^{b} f(y) dy \in  (b-a)f\left(\frac{a+b}{2}\right) + \frac{1}{24}(b-a)^3f''([a,b])
\end{align*}

We now decide to accept or reject the result, based on its width in an absolute and a relative (to the length of the integration region) way (it has to be smaller than \texttt{abs\_tol} and \texttt{rel\_tol} respectively). In the latter case, we split the region and recompute the integral on both subregions. The splitting is done by the midpoint. We keep track of the regions over which we need to integrate in a queue, implemented using a circular array. In order to avoid infinite loops -- which could potentially happen since there is uncertainty in the value of $x$ --, the size of the queue is limited at all times to \texttt{QSIZE} elements. In our code, \texttt{QSIZE} $= 1024$. If the program is not able to calculate an enclosure of the integral with the desired tolerances, we split in $x$ (by the midpoint) and recalculate each part until it meets them.

The integration region $I_2$ is further split into 3 regions. This is
because the source of the error comes from different places: for $x$
small, most of the error will come from the evaluation of $u$ and its
derivatives. For $x$ large, it will come from the integral. If $x$ is
close to $\pi$, we do not decide based on relative tolerances since
the result is very small (even 0). The different subregions were
$I_{2,1} = [0.1,1]$, $I_{2,2} = [1,3]$ and $I_{2,3} = [3,\pi]$. The
total runtime (for the 3 regions combined) was about 2 hours for
$T_0$, about 8 hours for $T_1$ and about 50 hours for $T_2$.

\end{proof}

\section*{Acknowledgements}

A.E.\ and B.V.\ are supported by the ERC Starting Grant~633152 and by the
ICMAT--Severo Ochoa grant SEV--2015--0554.
J.G.--S. was partially supported by the grant MTM2014--59488--P (Spain), by the ICMAT--Severo Ochoa grant SEV--2015--0554, by the Simons Collaboration Grant 524109 and by the NSF--DMS 1763356 Grant.

 \end{document}